\title{Congruence relations for $p$-adic hypergeometric functions $\widehat{\mathscr{F}}_{a,...,a}^{(\sigma)}(t)$ and its transformation formula} 
\author{Wang Chung-Hsuan \\ Department of Mathematics, National Cheng Kung University \\ email: a78sddrt@gmail.com}
\newtheorem{defi}{Definition}[section]
\newtheorem{thm}[defi]{Theorem}
\newtheorem{lem}[defi]{Lemma}
\newtheorem{cor}[defi]{Corollary}
\newtheorem{conj}[defi]{Conjecture}
\newtheorem{prop}[defi]{Proposition}
\newtheorem{remark}[defi]{Remark}
\theoremstyle{definition}
\newtheorem{eg}[defi]{Example}
\numberwithin{equation}{section}
\tikzset{
    labl/.style={anchor=south, rotate=90, inner sep=.5mm}
}
\providecommand{\keywords}[1]
{
  \textbf{\textit{Keywords---}} #1
}
\begin{document}
\date{}
\maketitle 
\begin{abstract}
  We introduce new kind of $p$-adic hypergeometric functions. We show these functions satisfy congruence relations similar to Dwork's $p$-adic hypergeometric functions, so they are convergent functions. And we show that there is a transformation formula between our new $p$-adic hypergeometric functions and $p$-adic hypergeometric functions of logarithmic type defined in \cite{A} in a particular case. 
\end{abstract}

\keywords
{$p$-adic hypergeometric functions, $p$-adic hypergeometric functions of logarithmic type, congruence relations}\medskip

\section{Introduction}
The classical hypergeometric function of one-variable
is defined to be the power series
$$
{}_sF_{s-1}\left({a_1,\ldots,a_s\atop b_1,\ldots,b_{s-1}};t\right)=\sum\limits_{k=0}^{\infty}\frac{(a_1)_k\cdots(a_s)_k}
{(b_1)_k\cdots(b_{s-1})_k}\frac{t^k}{k!}
$$
where $(\alpha)_k$ denotes the Pochhammer symbol (cf. \cite{slater}).
Let $(a_1,...,a_s)\in \mathbb{Z}_p^s$ be a $s$-tuple of $p$-adic integers, and consider
the series
$$
F_{a_1,...,a_s}(t)={}_sF_{s-1}\left({a_1,\ldots,a_s\atop 1,\ldots,1};t\right)=\sum\limits_{k=0}^{\infty}\frac{(a_1)_k}{k!}\cdots \frac{(a_s)_k}{k!}t^k.
$$
This is a formal power series with
$\mathbb{Z}_p$-coefficients. 
In the paper \cite{Dw}, B. Dwork introduced his $p$-adic hypergeometric functions.
Let $a^\prime$ be the Dwork prime of $a$, which is defined to be $(a+l) / p$ where $l\in \{0,1,...,p-1\}$ is the unique integer such that $a+l \equiv 0\mod p $. 
Then Dwork proved his $p$-adic hypergeometric function 
$
\mathscr{F}^{\rm Dw}_{a_1,...,a_s}(t)
$ (Definition \ref{Dwork's $p$-adic hypergeometric functions})
satisfies the congruence relations
$$
\mathscr{F}^{\rm Dw}_{a_1,...,a_s}(t)\equiv \frac{F_{a_1,...,a_s}(t)_{<p^n}}{[F_{a_1^\prime,...,a_s^\prime}(t^p)]_{<p^n}} \mod p^n\mathbb{Z}_p[[t]]
$$
where for a power series $f(t)=\sum_{n=0}^{\infty}A_nt^n$, we denote $f(t)_{<m}:=\sum_{n<m}A_nt^n$ the truncated polynomial. 
As a consequence, his function is $p$-adically analytic in the sense of Krasner
(i.e. an element of Tate algebra, \cite[3.1]{FP}), and one can define the value at
$t=\alpha$ with $|\alpha|_p=1$. Dwork applied his function to the unit root formula
of elliptic curve of Legendre type (e.g. \cite[\S 7]{Put}). \medskip

Recently, M. Asakura introduced a new function which he calls
$p$-adic hypergeoemtric functions of logarithmic type.
Let $W=W(\overline{\mathbb{F}}_{p})$ denote the Witt ring, and $K$=Frac$W$ its fractional field (e.g. \cite[Ch II, \S 6]{Se}). Let $\sigma : W[[t]] \rightarrow W[[t]]$ be a $p$-th Frobenius given by $\sigma(t)=ct^p$ with $c\in 1+pW:$ 
$$
\biggl(\sum_{i}a_{i}t^i\biggr)^{\sigma}=\sum_{i}a_{i}^{F}c^it^{ip}
$$ where $F: W \rightarrow W$ is the Frobenius on the Witt ring.
Then we can define Asakura's function $\mathscr{F}_{a_1,...,a_s}^{(\sigma)}(t)$
(see Definition \ref{p-adic hypergeometric functions of logarithmic type} or \cite[\S 3.1]{A}). If we write $\mathscr{F}_{a_1,...,a_s}^{(\sigma)}(t)=G_{a_1,...,a_s}(t)/F_{a_1,...,a_s}(t)$, he showed his $p$-adic hypergeometric functions of logarithmic type satisfy congruence relations similar to Dwork's,
$$
\mathscr{F}_{a_1,...,a_s}^{(\sigma)}(t)\equiv \frac{G_{a_1,...,a_s}(t)_{<p^n}}{F_{a_1,...,a_s}(t)_{<p^n}} \mod p^nW[[t]].
$$
Asakura studied his functions from the viewpoint of $p$-adic regulators.
In particular, some values of $\mathscr{F}_{a_1,...,a_s}^{(\sigma)}(t)$
are expected to be the special values of $p$-adic $L$-function of elliptic curves over $\mathbb Q$ (\cite[\S 5]{A}).
 
\medskip

In this paper, we introduce another new $p$-adic hypergeometric functions which we denote by $\widehat{\mathscr{F}}_{a,...,a}^{\;(\sigma)}(t)$. Let $a_1=\cdots=a_s=a.$ Then our function is defined as follows (Definition \ref{def-p-adicHG}),
$$
\widehat{\mathscr{F}}_{a,...,a}^{\;(\sigma)}(t):=\frac{t^{-a}}{F_{a,...,a}(t)}\int_{0}^{t} (t^aF_{a,...,a}(t)-(-1)^{se}[t^{a^\prime}F_{a^\prime,...,a^\prime}(t)]^{\sigma})\frac{dt}{t}.
$$
Write $\widehat{\mathscr{F}}_{a,...,a}^{\;(\sigma)}(t)=\widehat{G}^{(\sigma)}_{a,...,a}(t)/F_{a,...,a}(t)$. Our first main result is the following congruence relations
(Theorem \ref{congruence-thm}),
$$
\widehat{\mathscr{F}}_{a,...,a}^{\;(\sigma)}(t) \equiv \frac{\widehat{G}^{(\sigma)}_{a,...,a}(t)_{<p^n}}{F_{a,...,a}(t)_{<p^n}} \mod p^nW[[t]]. 
$$
The second main result is the transformation formula (=Theorem \ref{transformation-thm}). Let $\sigma(t)=ct^p$ and $\widehat{\sigma}(t)=c^{-1}t^p$. We will prove that
$$
\mathscr{F}_{a,a}^{\;(\sigma)}(t)=-\widehat{\mathscr{F}}_{a, a}^{\;(\widehat{\sigma})}(t^{-1})
$$
in case that $a\in \frac{1}{N}{\mathbb{Z}}$, $0<a<1$ and $p>N$ which comes from geometric observation (see next paragraph). We then generally conjecture 
a formula
$$
\mathscr{F}_{a,\cdots,a}^{\;(\sigma)}(t)=-\widehat{\mathscr{F}}_{a,\cdots,a}^{\;(\widehat{\sigma})}(t^{-1})
$$
between $\mathscr{F}_{a,\cdots,a}^{\;(\sigma)}(t)$ and $\widehat{\mathscr{F}}_{a,\cdots,a}^{\;(\widehat{\sigma})}(t)$, which we refer to as the transformation formula
(Conjecture \ref{transformation-conj}) and prove our theorem as a special case.
\medskip

The strategy of the proof of transformation formula is as follows. Let $N,A$ are integer with $N\geq 2$, $p>N,$ $1\leq A < N$ and gcd$(A,N)=1$. We consider the fibration $f:Y\to \mathbb{P}^1$ over $K=$Frac($W$) whose general fiber $f^{-1}(t)$ is the projective nonsingular model of
$$
y^N=x^A(1-x)^A(1-(1-t)x)^{N-A},
$$
and put $X_0:=f^{-1}(S)$ where $S_K:={\rm Spec}K[t,(t-t^2)^{-1}].$  \medskip

{
Asakura showed that his $\mathscr{F}_{a_1,...,a_s}^{(\sigma)}(t)$ appears in the $p$-adic regulator of $\xi$.
The key step in our proof is to provide an alternative description of the $p$-adic regulator by our $\widehat{\mathscr{F}}_{a,\cdots,a}^{\;(\sigma)}(t)$.
A key ingredient is the different description of $X$ via $(z,w,s_0)=(1-x,t_0^{A-N}y,t_0^{-1})$ with
$$
X: y^N=x^A(1-x)^A(1-(1-t_0^N)x)^{N-A}
$$
and
$$
\widehat{X}: w^N=z^A(1-z)^A(1-(1-s_0^N)z)^{N-A}
$$ with $t=t_0^N.$}\medskip

We also conjecture a similar transformation formula for Dwork's hypergeometric functions (Conjecture \ref{Dwork-trans-conj}). We attach a proof in case $s=2$, $a\in \frac{1}{N}{\mathbb{Z}}$, $0<a<1$ and $p>N$ which is based on a similar idea to the above (Theorem \ref{Dwork-trans-thm}).\medskip

This paper is organized as follows. In \S 2, we give the definition of $\widehat{\mathscr{F}}_{a,...,a}^{\;(\sigma)}(t)$. In \S 3, we prove the congruence relations. In \S 4, we give the proof of the transformation formula.
We also give a brief review on hypergeometric curves and some results in \cite{A} which are necessary
in the proof.

\medskip

\textbf{Acknowledgement.} I truly appreciate the help of Professor Masanori Asakura. He gave the definition of our new $p$-adic hypergeometric functions and the conjectures of transformation formulas with the aid of computer. Also, he gave a lot of advice of this paper. 
\section{Definition of $\widehat{\mathscr{F}}_{a,...,a}^{\;(\sigma)}(t)$}
Let $W=W(\overline{\mathbb{F}}_{p})$ denote the Witt ring, and $K$=Frac$W$ its fractional field. Let $\sigma : W[[t]] \rightarrow W[[t]]$ be a $p$-th Frobenius given by $\sigma(t)=ct^p$ with $c\in 1+pW$ :

\begin{equation}
\biggl(\sum_{i}a_{i}t^i\biggr)^{\sigma}=\sum_{i}a_{i}^{F}c^it^{ip}
\end{equation}
where $F: W \rightarrow W$ is the Frobenius on $W$. Given $a \in \mathbb{Z}_p$. Let $a^\prime$ be the Dwork prime of $a$, which is defined to be $(a+l) / p$ where $l\in \{0,1,...,p-1\}$ is the unique integer such that $a+l \equiv 0\mod p $. We denote the $i$-th Dwork prime by $a^{(i)}$ which is defined to be $(a^{(i-1)})^\prime$ with $a^{(0)}=a.$\medskip

Let 
\begin{equation}
F_{a,...,a}(t)=\sum_{k=0}^{\infty}{\biggl( \frac{(a)_k}{k!} \biggr)}^s t^k , \quad F_{a^\prime,...,a^\prime}(t)=\sum_{k=0}^{\infty}{\biggl( \frac{(a^\prime)_k}{k!} \biggr)}^s t^k
\end{equation}
be the hypergeometric series for $a\in \mathbb{Z}_p$, where $(a)_k$ denotes the Pochhammer symbol (i.e. $(\alpha)_k=\alpha(\alpha +1)\cdots (\alpha +k+1)$ when $k\neq 0$ and $(\alpha)_0=1$).\medskip

Put
$$ q:=\left\{
\begin{aligned}
4 &\quad p=2 \\
p  &\quad p\geq3. \\
\end{aligned}
\right.
$$
Let $l^\prime\in \{0,1,...,q-1\}$ be the unique integer such that $a+l^\prime \equiv 0\mod q $. \medskip

Put
$$
e:=l^\prime-\lfloor \frac{l^\prime}{p} \rfloor.
$$
Define a power series 
$$
\begin{aligned}
\widehat{G}^{(\sigma)}_{a,...,a}(t)&:=t^{-a} \int_{0}^{t} (t^aF_{a,...,a}(t)-(-1)^{se}[t^{a^\prime}F_{a^\prime,...,a^\prime}(t)]^{\sigma})\frac{dt}{t} \\
&=\sum_{k=0}^{\infty}B_{k}t^k
\end{aligned} 
$$
for $a\in \mathbb{Z}_p \backslash \mathbb{Z}_{\leq 0}$. Here we think $t^{\alpha}$ to be an abstract symbol with relations $t^{\alpha}\cdot t^{\beta}= t^{\alpha+\beta}$, on which $\sigma$ acts by $\sigma(t^{\alpha})=c^{\alpha}t^{p\alpha}$, where 
$$
c^{\alpha}=(1+pu)^{\alpha}:=\sum_{i=0}^{\infty}\binom{\alpha}{i}p^{i}u^i, \quad u\in W.  
$$
Moreover we think $\int_{0}^{t}(-)\frac{dt}{t}$ to be a operator such that
$$
\int_{0}^{t}t^{\alpha}\frac{dt}{t}=\frac{t^{\alpha}}{\alpha}, \quad \alpha \neq 0. 
$$
\begin{defi}\label{def-p-adicHG} Define
$$\widehat{\mathscr{F}}_{a,...,a}^{\;(\sigma)}(t):=\frac{\widehat{G}^{(\sigma)}_{a,...,a}(t)}{F_{a,...,a}(t)}, \quad a\in \mathbb{Z}_p \backslash \mathbb{Z}_{\leq 0}.$$ 
\end{defi}\noindent If we write $F_{a,...,a}(t)=\sum A_{k}t^k$ and $F_{a^\prime,...,a^\prime}(t)=\sum A_k^{(1)}t^k$. Then we have 
$$
B_k=\frac{1}{k+a}\biggl(A_{k}-(-1)^{se}(A_{\frac{k-l}{p}}^{(1)})c^{\frac{k+a}{p}}\biggr),
$$
where $A_{\frac{m}{p}}^{(1)}=0$ if $m \not\equiv 0 \mod p$ or $m <0$.
In fact, we have $\widehat{\mathscr{F}}_{a,...,a}^{\;(\sigma)}(t)$ and $\widehat{G}^{(\sigma)}_{a,...,a}(t)$ are power series with $W$-coefficients from the following lemma.
\begin{lem}
We have
$$
B_k \in W, \quad \forall k \in \mathbb{Z}_{\geq 0}.
$$
\end{lem}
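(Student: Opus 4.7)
The proof splits into two cases according to the residue of $k$ modulo $p$. If $k \not\equiv l \pmod p$, then since $a \equiv -l \pmod p$ we have $k+a \in W^{\times}$, and the convention $A_{(k-l)/p}^{(1)} = 0$ kills the Frobenius term, so $B_k = A_k/(k+a)$ lies in $W$ directly from $A_k \in W$.

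The substantive case is $k = pm+l$ with $m \in \mathbb{Z}_{\geq 0}$, so $k+a = p(m+a')$ and
$$
p(m+a')\, B_k = A_{pm+l} - (-1)^{se}\, c^{m+a'}\, A_m^{(1)}.
$$
The lemma thus reduces to the Dwork-type congruence
$$
A_{pm+l} \equiv (-1)^{se}\, c^{m+a'}\, A_m^{(1)} \pmod{p(m+a')\, W}. \qquad (\star)
$$
Since $c = 1 + pu$ for some $u \in W$, the binomial expansion gives $c^{m+a'} - 1 \in p(m+a')W$ (the linear term is $(m+a')pu$, and the higher terms $\binom{m+a'}{i}(pu)^i$ are absorbed via $\binom{m+a'}{i}/(m+a') = \binom{m+a'-1}{i-1}/i$). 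Multiplying by $A_m^{(1)} \in W$, this reduces $(\star)$ to the $c=1$ specialization
$$
A_{pm+l} \equiv (-1)^{se}\, A_m^{(1)} \pmod{p(m+a')\, W}. \qquad (\star_0)
$$

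To establish $(\star_0)$, I would expand the Pochhammer symbols, separating factors of $(a)_{pm+l}$ and $(pm+l)!$ according to divisibility by $p$:
$$
(a)_{pm+l} = p^m\, (a')_m \cdot \Pi_a(m), \qquad (pm+l)! = p^m\, m! \cdot \Pi_1(m),
$$
where $\Pi_a$ and $\Pi_1$ are products of $p$-adic units. Evaluating each block of $p$ consecutive residues in $\Pi_a(m)/\Pi_1(m)$ modulo $p$ via Wilson's theorem yields $(-1)^l$; raising to the $s$-th power then gives $(\star_0)$ modulo $p$ for $p \geq 3$ (where $e = l$). For $p = 2$ the mod-$p$ statement is vacuous since $-1 \equiv 1 \pmod 2$, and the correct sign must be extracted by a refined modulo-$4$ computation --- the definition $e = l' - \lfloor l'/p \rfloor$ with $q = 4$ is calibrated precisely for this. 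The sharper precision $p(m+a')$, genuinely stronger when $v_p(m+a') > 0$, is obtained by iterating the Pochhammer expansion to track the additional divisibility coming from those factors $a + pj + l = p(a'+j)$ in $(a)_{pm+l}$ whose $p$-adic valuation grows as $j$ approaches $-a'$. The main obstacle is this precision refinement: the mod-$p$ statement is elementary, but the $(m+a')$-dependent sharpening and the $p=2$ sign bookkeeping via $q=4$ are exactly where the Dwork prime operation earns its keep.
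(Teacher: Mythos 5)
Your reduction is sound and coincides with the paper's: the case $k\not\equiv l\pmod p$ is immediate, the factor $c^{m+a'}$ is correctly stripped off (the estimate $c^{m+a'}-1\in p(m+a')W$ is fine), and the lemma does come down to your congruence $(\star_0)$, i.e.\ $A_{pm+l}\equiv(-1)^{se}A_m^{(1)}$ modulo $(k+a)W=p^nW$. But you do not prove $(\star_0)$: you describe what you ``would'' do and then explicitly label the precision beyond mod $p$ as ``the main obstacle'' without overcoming it. That refinement is the entire content of the lemma whenever $v_p(m+a')>0$, so the proposal has a genuine gap at its core. Moreover, the mechanism you point to for the refinement --- extra divisibility of the factors $a+pj+l=p(a'+j)$ --- is misdirected: those are precisely the factors already absorbed into $(a')_m$, hence into $A_m^{(1)}$ on the right-hand side of $(\star_0)$; since $A_m^{(1)}$ can perfectly well be a $p$-adic unit, their valuations cannot supply the missing powers of $p$.

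The actual source of the higher precision, as in the paper, lies in the prime-to-$p$ part. Setting $\{\alpha\}_k=\prod_{1\le i\le k,\ p\nmid(\alpha+i-1)}(\alpha+i-1)$, one has the exact factorization $A_k=A^{(1)}_{(k-l)/p}\bigl(\{a\}_k/\{1\}_k\bigr)^s$, and the key point is that $\{a\}_k\equiv\{-k\}_k\pmod{p^n}$ because $a\equiv-k\pmod{p^n}$ and every factor involved is a unit, while $\{-k\}_k/\{1\}_k$ equals $(-1)^{k-\lfloor k/p\rfloor}$ \emph{exactly}, not merely mod $p$. So the unit ratio is congruent to a sign modulo the full $p^n$, which is what your Wilson-theorem-mod-$p$ evaluation of $\Pi_a/\Pi_1$ cannot deliver. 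What then remains is the combinatorial identity $(-1)^{k-\lfloor k/p\rfloor}=(-1)^e$; for $p=2$ this requires the case split on $v_2(k+a)=1$ versus $v_2(k+a)\ge 2$ (this is exactly where $q=4$ and $e=l'-\lfloor l'/2\rfloor$ enter), which you assert is ``calibrated precisely for this'' but do not verify. To complete your argument you would need to replace the mod-$p$ block evaluation by this term-by-term unit comparison modulo $p^n$ and carry out the $p=2$ sign check.
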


\begin{proof}
If $k\not\equiv l \mod p$, then $k+a \not\equiv 0 \mod p.$ Hence
$$
B_k=\frac{A_k}{k+a} \in W.
$$

For $k\equiv l \mod p$, write $k+a=p^nm$ with $p \nmid m.$ Then 
$$
c^{\frac{k+a}{p}}=c^{p^{n-1}m}\equiv 1 \mod p^n
$$
since $c \in 1+pW$. So if we can show 
$$
A_k\equiv (-1)^{se}A_{\frac{k-l}{p}}^{(1)} \mod p^n
$$
then $B_k\in W.$ Indeed, for an $p$-adic integer $\alpha\in \mathbb{Z}_p$ and $n\in \mathbb{Z}_{\geq 1}$ we define 
$$
\{\alpha\}_n:=\prod\limits_{\substack {1\leq i \leq n \\ p\nmid (\alpha+i-1)}}(\alpha+i-1)
$$
Using \cite[Lemma 3.6]{A}, one have

$$
\frac{(a)_k}{k!}=\bigg(\frac{(a^\prime)_{\lfloor k/p \rfloor}}{\lfloor k/p \rfloor}!\bigg)^{-1}\frac{\{a\}_k}{\{1\}_k}=\frac{(a^\prime)_{\frac{k-l}{p}}}{(\frac{k-l}{p})!}\frac{\{a\}_k}{\{1\}_k}.
$$
Therefore
$$
A_k=A_{\frac{k-l}{p}}^{(1)}\bigg(\frac{\{a\}_k}{\{1\}_k}\bigg)^{s}.
$$

Since $k+a\equiv 0 \mod p^n$, we have that $\{a\}_k\equiv \{-k\}_k \mod p^n.$ Therefore
$$
\begin{aligned}
\frac{\{a\}_k}{\{1\}_k}&\equiv \frac{\{-k\}_k}{\{1\}_k} \\
&\equiv \frac{\prod\limits_{\substack{ 1\leq i \leq k \\ p \nmid (-k+i-1)}}(-k+i-1)}{\prod\limits_{\substack {1\leq i \leq k \\ p\nmid i}}i}\\
&\equiv \prod\limits_{\substack{i=1 \\ p\nmid i}}^{k}(-1)=(-1)^{k-\lfloor k/p \rfloor} \mod p^n.
\end{aligned}
$$

We claim $(-1)^{k-\lfloor k/p \rfloor}=(-1)^e.$\medskip

$\bullet$ For $p \geq 3$, write $k=l+bp$. Then 
$$
k-\lfloor k/p \rfloor=l+bp-\lfloor l/p+b \rfloor=e+b(p-1)\equiv e \mod 2.
$$
Therefore  $(-1)^{k-\lfloor k/p \rfloor}=(-1)^e.$ \medskip

$\bullet$ For $p=2$, write $k+a=2^nm,$ $2 \nmid m.$\medskip

(1)If $n=1,$ we have $(-1)^{k-\lfloor k/p \rfloor}=(-1)^e.$ \medskip

(2)If $n\geq 2,$ we have $k+a\equiv 0 \mod 4.$ Therefore $k\equiv L \mod 4$ where $L\in\{0,1,2,3\}$ is the unique integer such that $a+L\equiv 0 \mod 4.$ Write $k=L+4b$, then 
$$
k-\lfloor k/2 \rfloor=L+4b-\lfloor L/2 \rfloor-2b\equiv e \mod 2.
$$
Again, we obtain $(-1)^{k-\lfloor k/p \rfloor}=(-1)^e$. \medskip

Therefore
$$
A_k\equiv A_{\frac{k-l}{p}}^{(1)}(-1)^{es} \mod p^n 
$$

This completes the proof.
\end{proof}
\section{Congruence Relations}
For a power series $f(t)=\sum_{n=0}^{\infty}A_nt^n$, we denote $f(t)_{<m}:=\sum_{n<m}A_nt^n$ the truncated polynomial. Then we have the following theorem which we call the congruence relations of $\widehat{\mathscr{F}}_{a,...,a}^{\;(\sigma)}(t).$
\begin{thm}\label{congruence-thm}
Let $a\in \mathbb{Z}_p \backslash \mathbb{Z}_{\leq 0}$ and suppose $c\in1+qW$. Then
$$
\widehat{\mathscr{F}}_{a,...,a}^{\;(\sigma)}(t) \equiv \frac{\widehat{G}^{(\sigma)}_{a,...,a}(t)_{<p^n}}{F_{a,...,a}(t)_{<p^n}} \mod p^nW[[t]] 
$$ for all $n \in \mathbb{Z}_{\geq0}$.
\end{thm}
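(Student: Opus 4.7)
First I would observe that since $F_{a,\ldots,a}(0)=1$, the truncation $F_{a,\ldots,a}(t)_{<p^n}$ is a unit in $W[[t]]$, and so the theorem is equivalent to the polynomial congruence
\begin{equation*}
\widehat{G}^{(\sigma)}_{a,\ldots,a}(t)\cdot F_{a,\ldots,a}(t)_{<p^n}\equiv \widehat{G}^{(\sigma)}_{a,\ldots,a}(t)_{<p^n}\cdot F_{a,\ldots,a}(t)\pmod{p^nW[[t]]}.
\end{equation*}
It then suffices to verify that the coefficient of $t^m$ in this difference is divisible by $p^n$ for every $m\geq p^n$, since the lower-degree coefficients vanish trivially.

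My plan is to model the proof on Asakura's argument for the analogous congruence of $\mathscr{F}^{(\sigma)}_{a_1,\ldots,a_s}$ in \cite{A}, carrying along the additional sign factor $(-1)^{se}$ at each step. The two main inputs would be (a) the classical Dwork congruence recalled in the introduction, namely
\begin{equation*}
F_{a,\ldots,a}(t)\cdot F_{a',\ldots,a'}(t^p)_{<p^n}\equiv F_{a,\ldots,a}(t)_{<p^n}\cdot F_{a',\ldots,a'}(t^p)\pmod{p^n},
\end{equation*}
and (b) the pointwise estimate $(\{a\}_k/\{1\}_k)^s\equiv(-1)^{es}\pmod{p^{v_p(k+a)}}$ extracted from the proof of the preceding Lemma. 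Using the Dwork factorization $A_k=A_{\lfloor k/p\rfloor}^{(1)}\cdot(\{a\}_k/\{1\}_k)^s$, I would rewrite each coefficient $B_k$ of $\widehat{G}^{(\sigma)}_{a,\ldots,a}$ as $A_{\lfloor k/p\rfloor}^{(1)}\cdot U_k$ for an explicit $W$-valued factor $U_k$ built from $(\{a\}_k/\{1\}_k)^s$, $c^{(k+a)/p}$ and $1/(k+a)$. Substituting these expressions into the displayed polynomial congruence and applying (a) to the $A_k$'s should reduce it to a telescoping identity in the $U_k$'s, which is controlled by (b).

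The main obstacle is coordinating three sources of $p$-adic precision: the denominators $1/(k+a)$ inside the $B_k$'s, the Dwork congruence between $A_k$ and $A_{\lfloor k/p\rfloor}^{(1)}$, and the deviation of $c^{(k+a)/p}$ from $1$. The hypothesis $c\in 1+qW$ is precisely calibrated for this: for $p\geq 3$ it yields $v_p(c^N-1)\geq v_p(N)+1$, while for $p=2$ the stronger $q=4$ yields $v_2(c^N-1)\geq v_2(N)+2$; in either case this exactly compensates the loss from $1/(k+a)$ in the worst case $p^n\mid k+a$. A secondary bookkeeping nuisance is the sign $(-1)^{se}$, whose behaviour under iteration of the Dwork prime (where both $a$ and $e$ change simultaneously) must be tracked carefully, echoing the $p=2$ case analysis already carried out in the preceding Lemma.
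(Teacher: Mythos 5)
Your reduction of the theorem to the bilinear congruence $\widehat{G}^{(\sigma)}(t)\cdot F(t)_{<p^n}\equiv \widehat{G}^{(\sigma)}(t)_{<p^n}\cdot F(t)$, i.e.\ to the vanishing mod $p^n$ of $S_m=\sum_{i+j=m}A_{i+p^n}B_j-A_iB_{j+p^n}$, is exactly how the paper proceeds (\S 3.2), and your intention to factor $B_k$ through $A^{(1)}_{\lfloor k/p\rfloor}$ and invoke Dwork-type congruences for the $A_k$'s matches the skeleton of Lemmas \ref{S_m_1}--\ref{generalBA}. The calibration remark about $c\in 1+qW$ is also on target (it is handled in Lemma \ref{c=1}).

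However, there is a genuine gap at the decisive step. Your input (b), the pointwise estimate $(\{a\}_k/\{1\}_k)^s\equiv(-1)^{es}\bmod p^{v_p(k+a)}$, is precisely what proves $B_k\in W$ (the lemma preceding the theorem) and nothing more: it controls each $U_k$ individually but says nothing about how $U_k$ varies with $k$. What the argument actually requires --- and what the paper isolates as its central Lemma \ref{BA_congruence} --- is the \emph{Lipschitz} property: $k\equiv k'\bmod p^s$ implies $B_k/A_k\equiv B_{k'}/A_{k'}\bmod p^s$. This is needed to replace $B_{j+p^n}/A_{j+p^n}$ by $B_j/A_j$ inside $S_m$ (Lemma \ref{S_m_1}), and it is a second-order statement: when $v_p(k+a)=n$ is large, one must show that $1-(-1)^{se}(\{1\}_k/\{a\}_k)^s$ agrees to order $p^{n+m'}$ across indices in the same class mod $p^{m'}$, which forces one to identify the next term in the expansion. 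The paper does this by proving $B_l/A_l\equiv\psi_p(a+l)-\psi_p(1+l)\bmod p^n$ via the $p$-adic digamma function (Lemma \ref{BlAl}) and then propagating this through a multi-case analysis on $v_p(k+a)$ versus $v_p(k-l)$ (Lemma \ref{B_k_and_B_l} and the three cases in the proof of Lemma \ref{BA_congruence}). Your proposed ``telescoping identity in the $U_k$'s controlled by (b)'' cannot close without this uniform-continuity input; integrality alone would only give the congruence mod $p$, not mod $p^n$. Even after the Lipschitz property is established, the conclusion is not a simple telescope: one still needs Asakura's averaging lemma ($\sum_{i+j=m,\ i\equiv k\ (p^{n-l})}A_iA_{j+p^{n-1}}-A_jA_{i+p^{n-1}}\equiv 0\bmod p^l$) to kill the sums $\sum_k A^*(k,s-k)P_k$ level by level.
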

\begin{cor}\label{overconvergent}
If $a^{(r)}=a$ for some $r>0,$ where $(-)^{(r)}$ denote the $r$-th Dwork prime, then
$$
\widehat{\mathscr{F}}_{a,...,a}^{\;(\sigma)}(t)\in W\langle t, t^{-1}, h(t)^{-1} \rangle, \quad h(t):=\prod\limits_{i=0}^{r-1}F_{a^{(i)},...,a^{(i)}}(t)_{<p}
$$
is a convergent function, where $W\langle t, t^{-1}, h(t)^{-1} \rangle :=\underset{n}{\varprojlim}(W/p^n[t,t^{-1},h(t)^{-1}])$.
\end{cor}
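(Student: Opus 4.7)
The strategy is to combine Theorem \ref{congruence-thm} with Dwork's classical theorem on $F_{a,\ldots,a}(t)$ to show that the denominator $F_{a,\ldots,a}(t)_{<p^n}$ appearing in the congruence is invertible in the Tate algebra $\mathscr{A} := W\langle t, t^{-1}, h(t)^{-1}\rangle$, and then deduce convergence of $\widehat{\mathscr{F}}^{(\sigma)}_{a,\ldots,a}(t)$ by $p$-adic completeness.

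Since $\mathscr{A}$ is $p$-adically complete, it suffices to show that the rational function $f_n := \widehat{G}^{(\sigma)}_{a,\ldots,a}(t)_{<p^n}/F_{a,\ldots,a}(t)_{<p^n}$ — which by Theorem \ref{congruence-thm} is congruent to $\widehat{\mathscr{F}}^{(\sigma)}_{a,\ldots,a}(t)$ modulo $p^n$ — lies in $\mathscr{A}$ for each $n$. The numerator is a polynomial in $W[t]$, so the essential task is to prove that $F_{a,\ldots,a}(t)_{<p^n}$ is a unit in $\mathscr{A}$.

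To do so, the plan is to use Dwork's identity $F_{a,\ldots,a}(t) = \mathscr{F}^{\mathrm{Dw}}_{a,\ldots,a}(t)\cdot F_{a',\ldots,a'}(t^p)$ in $W[[t]]$. Iterating gives the formal infinite product
$$
F_{a,\ldots,a}(t) = \prod_{i=0}^{\infty} \mathscr{F}^{\mathrm{Dw}}_{a^{(i)},\ldots,a^{(i)}}(t^{p^i})
$$
(converging $t$-adically). Dwork's mod-$p$ congruence yields $\mathscr{F}^{\mathrm{Dw}}_{a^{(i)},\ldots,a^{(i)}}(t) \equiv F_{a^{(i)},\ldots,a^{(i)}}(t)_{<p} \pmod{p}$. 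Reducing modulo $p$ and truncating in $t$-degree to $<p^n$ (noting that factors with $i \geq n$ are $\equiv 1 \pmod{t^{p^n}}$), we obtain
$$
F_{a,\ldots,a}(t)_{<p^n} \equiv \prod_{i=0}^{n-1} F_{a^{(i)},\ldots,a^{(i)}}(u)_{<p}\big|_{u=t^{p^i}} \pmod{p}.
$$
In $\mathbb{F}_p[t]$ the Frobenius identity (using that the coefficients of $F_{a^{(i)}}$ lie in $\mathbb{Z}_p$) gives $F_{a^{(i)}}(u)_{<p}|_{u=t^{p^i}} = (F_{a^{(i)}}(t)_{<p})^{p^i}$. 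By the periodicity $a^{(r)}=a$, each such factor equals $F_{a^{(j)},\ldots,a^{(j)}}(t)_{<p}$ for some $0 \leq j < r$, and hence divides $h(t)$. Therefore $F_{a,\ldots,a}(t)_{<p^n} \bmod p$ divides a power of $h(t) \bmod p$, making it a unit in $\mathscr{A}/p\mathscr{A}$. Since $p$ lies in the Jacobson radical of the $p$-adically complete ring $\mathscr{A}$, this lifts to the statement that $F_{a,\ldots,a}(t)_{<p^n}$ is a unit in $\mathscr{A}$.

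The main obstacle I anticipate is the careful handling of the mod-$p$ infinite product identity — specifically verifying that the $t$-adic convergence of the Dwork product is compatible with mod-$p$ reduction and with the Frobenius substitution $u\mapsto t^{p^i}$ inside the truncation $(\cdot)_{<p}$. Once the unit property of $F_{a,\ldots,a}(t)_{<p^n}$ is secured, the verification that the sequence $\{f_n\}$ is $p$-adically Cauchy in $\mathscr{A}$ and converges to $\widehat{\mathscr{F}}^{(\sigma)}_{a,\ldots,a}(t)$ follows from Theorem \ref{congruence-thm} together with the $p$-torsion-freeness of $\mathscr{A}$.
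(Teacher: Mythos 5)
Your proposal is correct and follows essentially the same route as the paper: both reduce to showing $F_{a,\ldots,a}(t)_{<p^n}$ is a unit, both use Dwork's congruence together with the Frobenius substitution identity to obtain the mod-$p$ factorization $F_{a,\ldots,a}(t)_{<p^n}\equiv\prod_{i=0}^{n-1}\bigl[F_{a^{(i)},\ldots,a^{(i)}}(t)_{<p}\bigr]^{p^i}$, and both conclude via periodicity of the Dwork prime and $p$-adic completeness. The only cosmetic difference is that you package Dwork's congruence as an infinite product identity before truncating, whereas the paper iterates the truncated congruence directly.
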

\begin{proof}
It suffices to show that modulo $p^n$
$$
\frac{1}{F_{a,\cdots,a}(t)_{<p^n}} 
$$ lies in $W/p^n[t,t^{-1},h(t)^{-1}]$. From Dwork congruence \cite[p.37, Theorem 2]{Dw}, we have
$$
F_{a,\cdots,a}(t)_{<p}\equiv \frac{F_{a,\cdots,a}(t)_{<p^n}}{\Big[F_{a^\prime,\cdots,a^\prime}(t^p)\Big]_{<p^n}} \mod p\mathbb{Z}_p[[t]].
$$ Thus,
$$
F_{a,\cdots,a}(t)_{<p^n}\equiv F_{a,\cdots,a}(t)_{<p}\Big[F_{a^\prime,\cdots,a^\prime}(t^p)\Big]_{<p^n} \mod p\mathbb{Z}_p[[t]].
$$ Using Dwork congruence again, we obtain
$$
F_{a,\cdots,a}(t)_{<p}\Big[F_{a^\prime,\cdots,a^\prime}(t^p)\Big]_{<p^n}\equiv F_{a,\cdots,a}(t)_{<p}\Big[F_{a^\prime,\cdots,a^\prime}(t^p)\Big]_{<p^2}\Big[F_{a^{(2)},\cdots,a^{(2)}}(t^{p^2})\Big]_{<p^n} \mod p\mathbb{Z}_p[[t]].
$$ So by applying Dwork congruence inductively, we have
$$
F_{a,\cdots,a}(t)_{<p^n}\equiv F_{a,\cdots,a}(t)_{<p}\cdots \Big[F_{a^{(i)},\cdots,a^{(i)}}(t^{p^i})\Big]_{<p^{i+1}}\cdots\Big[F_{a^{(n-1)},\cdots,a^{(n-1)}}(t^{p^{n-1}})\Big]_{<p^{n}} \mod p\mathbb{Z}_p[[t]].
$$ Since both sides of the equation are polynomial, $p\mathbb{Z}_p[[t]]$ can be replaced by $p\mathbb{Z}_p[t]$.
Now we use the fact that for any $F(x)\in \mathbb{Z}_p[[t]],$ one has
$$
\left.F(x)_{<p^{n-1}}\right|_{x=t^p}\equiv \bigg(\left.F(x)_{<p^{n-1}}\right|_{x=t}\bigg)^p= \bigg(F(t)_{<p^{n-1}}\bigg)^p \mod p\mathbb{Z}_p[t]. 
$$ Then by induction, we have
$$
\Big[F_{a^{(i)},\cdots,a^{(i)}}(t^{p^i})\Big]_{<p^{i+1}}\equiv \Big[F_{a^{(i)},\cdots,a^{(i)}}(t)_{<p}\Big]^{p^{i}} \mod p\mathbb{Z}_p[t]
$$ Therefore,
$$
F_{a,\cdots,a}(t)_{<p^n}\equiv F_{a,\cdots,a}(t)_{<p}\cdots \Big[F_{a^{(i)},\cdots,a^{(i)}}(t)_{<p}\Big]^{p^i}\cdots\Big[F_{a^{(n-1)},\cdots,a^{(n-1)}}(t)_{<p}\Big]^{p^{n-1}} \mod p\mathbb{Z}_p[t].
$$
So we can write $F_{a,\cdots,a}(t)_{<p^n}=F_{a,\cdots,a}(t)_{<p}\cdots \Big[F_{a^{(i)},\cdots,a^{(i)}}(t)_{<p}\Big]^{p^i}\cdots\Big[F_{a^{(n-1)},\cdots,a^{(n-1)}}(t)_{<p}\Big]^{p^{n-1}}+pg(t)$ for some $g(t)\in \mathbb{Z}_p[t].$ Then considering
$(F_{a,\cdots,a}(t)_{<p^n})^{-1}$ modulo $p^n$, we see that it lies in $W/p^n[t,t^{-1},h(t)^{-1}].$
\end{proof}
\subsection{Proof of Congruence Relations: Lipschitz Functions and Congruence Relations for ${B_k}/{A_k}$}
Here we introduce a notion called Lipschitz functions.
\begin{defi} Let $W$ be the Witt ring of $\overline{\mathbb{F}}_p.$ A function
A function $f:\mathbb{Z}_{\geq 0} \rightarrow \text{Frac}(W)$ is called \textit{Lipschitz} if it takes values in $W$ and $p^s|(n-m)$ implies $p^s|(f(n)-f(m))$ for any $n,m,s\in \mathbb{Z}_{\geq 0}.$ 
\end{defi}

\begin{remark}
Generally, one can define Lipschitz functions as functions from $X$ to $K$ where $K$ denote a complete extension of $\mathbb{Q}_p$ and $X\subseteq K$ is a subset with no isolated point and satisfy
$$
|f(x)-f(y)|_p\leq M|x-y|_p
$$ for some constant $M$ and $x,y\in X$(See \cite[\S 5.1]{Rob}). In our case, $M=1.$
\end{remark}
In this section, we fix $a\in \mathbb{Z}_p\backslash \mathbb{Z}_{\leq 0}$. And the main result of this section is the following lemma.
\begin{lem}\label{BA_congruence}
The function defined by $k\mapsto B_k/A_k$ is Lipschitz.
\end{lem}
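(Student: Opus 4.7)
The plan is to split by the residue class of $k$ modulo $p$ and reduce to a strengthened Dwork-type congruence. If $k\not\equiv l\pmod p$, then by convention $A^{(1)}_{(k-l)/p}=0$ in the definition of $B_k$, so $B_k/A_k=1/(k+a)$ with $k+a\in W^\times$. For $n,m$ in this residue class with $p^s\mid n-m$, the elementary identity $1/(n+a)-1/(m+a)=(m-n)/((n+a)(m+a))$ manifestly lies in $p^sW$. The mixed case in which only one of $n,m$ is congruent to $l$ modulo $p$ cannot occur for $s\geq 1$, since $p\mid n-m$ forces $n\equiv m\pmod p$.

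The substantive case is $n\equiv m\equiv l\pmod p$. Using the factorization $A_k=A^{(1)}_{(k-l)/p}(\{a\}_k/\{1\}_k)^s$ established in the proof of Lemma~2.2, I would rewrite
$$\frac{B_k}{A_k}=\frac{1-X_k}{k+a},\qquad X_k:=(-1)^{se}\bigg(\frac{\{1\}_k}{\{a\}_k}\bigg)^{\!s}c^{(k+a)/p},$$
so that the previous lemma reads $X_k\equiv 1\pmod{p^{v_p(k+a)}}$. From the algebraic identity
$$(n+a)\bigg(\frac{B_n}{A_n}-\frac{B_m}{A_m}\bigg)=(X_m-X_n)+(m-n)\frac{B_m}{A_m},$$
the desired Lipschitz estimate $B_n/A_n-B_m/A_m\in p^sW$ reduces to proving that $X_m-X_n+(m-n)(B_m/A_m)$ is divisible by $p^{s+v_p(n+a)}$. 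To obtain this, I would sharpen the congruence of Lemma~2.2 by asserting that within the residue class $k\equiv l\pmod p$, the quantity $(\{a\}_k/\{1\}_k)^s$ is Lipschitz in $k$ to a depth exceeding the baseline by the pole order of $1/(k+a)$; the mechanism is to group the factors of $\{a\}_k$ and $\{1\}_k$ into blocks of length $p$ and match them block-by-block using a Dwork-style cancellation, in the spirit of \cite[Lemma~3.6]{A}. The factor $c^{(k+a)/p}$ is controlled separately by expanding $(1+pu)^{(n+a)/p}-(1+pu)^{(m+a)/p}$ with the binomial series and invoking $c\in 1+pW$ together with $p^s\mid n-m$.

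The main obstacle is exactly the potentially highly $p$-divisible denominator $k+a$: the naive bound loses $v_p(n+a)+v_p(m+a)$ powers of $p$, so the refined congruences on $X_k$ must supply matching gains. A careful case analysis on whether $v_p(n+a)$ and $v_p(m+a)$ are both less than $s$, both at least $s$, or one dominates the other is unavoidable; in particular one uses that $p^s\mid n-m$ forces $v_p(n+a)=v_p(m+a)$ unless both are already $\geq s$. This amplification is the technical heart of the lemma and parallels the inductive iteration in Dwork's original proof of his congruence relations.
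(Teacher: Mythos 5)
Your easy case ($k\not\equiv l\bmod p$) and the algebraic identity
$(n+a)\bigl(\tfrac{B_n}{A_n}-\tfrac{B_m}{A_m}\bigr)=(X_m-X_n)+(m-n)\tfrac{B_m}{A_m}$
are both correct, and you have correctly located the difficulty in the denominator $k+a$. But the mechanism you propose for the substantive case has a genuine gap: the ``sharpened congruence'' you want --- that $(\{a\}_k/\{1\}_k)^s$, and hence $X_k$, is Lipschitz to depth $s+v$ where $v=v_p(n+a)$ --- is false in general. Indeed, writing $n-m=bp^s$, one has by \cite[Lemma 3.8]{A} an expansion of the form
$X_n/X_m=1+(n-m)\bigl(\psi_p(a+m)-\psi_p(1+m)\bigr)+O(p^{s+v})$,
and the coefficient $\psi_p(a+m)-\psi_p(1+m)\equiv-(\psi_p(1+l)+\gamma_p)\bmod p^{v}$ is generically a $p$-adic unit (e.g.\ already for $l=1$). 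So $X_m-X_n$ is divisible by exactly $p^s$, not $p^{s+v}$, and no amount of block-by-block matching of the factors of $\{a\}_k$ and $\{1\}_k$ will improve this. The required divisibility by $p^{s+v}$ of the combination $X_m-X_n+(m-n)B_m/A_m$ comes not from each term separately but from a cancellation between the two, and to exhibit it you need the additional input that $B_m/A_m$ itself equals the same digamma difference: this is precisely Lemma \ref{BlAl}, $B_l/A_l\equiv\psi_p(a+l)-\psi_p(1+l)\bmod p^{v}$, combined with the propagation $B_k/A_k\equiv B_l/A_l\bmod p^m$ of Lemma \ref{B_k_and_B_l}. Your proposal contains neither this identification nor any substitute for it, so the ``matching gains'' you invoke are not supplied.

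Two smaller remarks. First, your separate treatment of the factor $c^{(k+a)/p}$ by binomial expansion is essentially the paper's Lemma \ref{c=1}, and is fine; the paper also reduces to $s=1$ (Lemma \ref{s=1}) before the hard analysis, which you could adopt to avoid carrying the exponent $s$ through the digamma computations. Second, your observation that $p^s\mid n-m$ forces $v_p(n+a)=v_p(m+a)$ unless both are $\geq s$ is correct and is indeed the skeleton of the case division in the paper's proof (the cases $m'\lessgtr m$, $n=m$ versus $n\neq m$); but the case analysis only becomes tractable once the digamma identification is in hand.
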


Before proving Lemma \ref{BA_congruence}, we need some preliminary lemmas. 
\begin{lem}\label{Lsumproduct}
Polynomial functions are Lipschitz. The sums and products of Lipschitz functions are Lipschitz. 
\end{lem}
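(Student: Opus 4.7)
The plan is to verify the three claims directly from the definition. I would begin with two obvious base cases: a constant function $f(n)=c\in W$ satisfies $f(n)-f(m)=0$, and the identity $f(n)=n$ satisfies $f(n)-f(m)=n-m$, so both are Lipschitz. Once closure under sums and products is established, the polynomial case follows because every polynomial $f\in W[x]$, viewed as a function $\mathbb{Z}_{\geq 0}\to W$, is built from these base cases by finitely many additions and multiplications.

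Closure under sums is immediate: if $f,g$ are Lipschitz and $p^s\mid n-m$, then $p^s$ divides both $f(n)-f(m)$ and $g(n)-g(m)$, and hence divides their sum $(f+g)(n)-(f+g)(m)$. For products I would invoke the standard identity
$$
(fg)(n)-(fg)(m)=f(n)\bigl(g(n)-g(m)\bigr)+g(m)\bigl(f(n)-f(m)\bigr),
$$
and observe that $f(n),g(m)\in W$ have nonnegative $p$-adic valuation, so multiplication by them cannot lower the divisibility of the factors $g(n)-g(m)$ and $f(n)-f(m)$. Hence each of the two summands on the right is divisible by $p^s$, and so is $(fg)(n)-(fg)(m)$.

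There is no real obstacle here: the statement is a routine packaging of elementary ultrametric facts, and its proof fits in a few lines. The reason to record it as a lemma is strategic rather than technical, namely to enable a compositional proof of Lemma \ref{BA_congruence} by exhibiting $k\mapsto B_k/A_k$ as a combination of simpler Lipschitz building blocks, rather than wrestling with the closed form of $B_k$ directly.
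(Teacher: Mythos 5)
Your proof is correct; the paper states Lemma \ref{Lsumproduct} without any proof at all, so there is nothing to diverge from. Your argument --- constants and the identity as base cases, closure under sums by linearity, closure under products via the telescoping identity $(fg)(n)-(fg)(m)=f(n)(g(n)-g(m))+g(m)(f(n)-f(m))$ together with the integrality of the values in $W$ --- is exactly the standard verification the author evidently had in mind, and it correctly uses the hypothesis that Lipschitz functions take values in $W$ (so that multiplication cannot decrease $p$-adic valuation), which is the only point where one could conceivably slip.
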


\begin{lem}\label{c=1}
Lemma \ref{BA_congruence} can be reduced to the case $\sigma(t)=t^p,$ i.e., $c=1$.  
\end{lem}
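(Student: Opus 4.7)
The plan is to isolate the entire $c$-dependence of the $B_k$ as an additive perturbation that is itself Lipschitz, so that the Lipschitz property for general $c$ reduces, via Lemma \ref{Lsumproduct}, to the case $c=1$. Write $B_k^{(c)}$ and $B_k^{(1)}$ for the coefficients at Frobenius parameter $c$ and at $c=1$ respectively. From the explicit formula, $B_k^{(c)}-B_k^{(1)}$ vanishes when $k\not\equiv l\pmod p$, while for $k\equiv l\pmod p$ one has
$$
B_k^{(c)}-B_k^{(1)}=-\frac{(-1)^{se}A_{(k-l)/p}^{(1)}\bigl(c^{(k+a)/p}-1\bigr)}{k+a}.
$$
Using the identity $A_k = A_{(k-l)/p}^{(1)}\bigl(\{a\}_k/\{1\}_k\bigr)^s$ established in the proof of the preceding lemma of Section 2 (valid on $k\equiv l\pmod p$), this simplifies to
$$
\frac{B_k^{(c)}-B_k^{(1)}}{A_k}=-(-1)^{se}\left(\frac{\{1\}_k}{\{a\}_k}\right)^{\!s}\frac{c^{(k+a)/p}-1}{k+a}.
$$

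Since any congruence $k_1\equiv k_2\pmod{p^s}$ with $s\ge 1$ forces $k_1\equiv k_2\pmod p$, it suffices to verify the Lipschitz condition on the residue class $k\equiv l\pmod p$. By Lemma \ref{Lsumproduct} it then suffices to show that each of
$$
U_k:=\left(\frac{\{1\}_k}{\{a\}_k}\right)^{\!s},\qquad V_k:=\frac{c^{(k+a)/p}-1}{k+a}
$$
is Lipschitz on that class. The factor $U_k$ is a product of units in $W^\times$, and its Lipschitz-ness is an elementary Dwork-style manipulation essentially contained in the argument around \cite[Lemma 3.6]{A} already used in Section 2.

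The substantive step is the Lipschitz property of $V_k$. Writing $c=1+qu$ with $u\in W$, one has $\log c=q\tilde u$ for some $\tilde u\in W$. Setting $x:=(k+a)/p$ and expanding the exponential yields the convergent $W$-valued power series
$$
V_k=\frac{e^{qx\tilde u}-1}{px}=\frac{q}{p}\sum_{n\ge 1}\frac{(qx)^{n-1}\tilde u^n}{n!}
$$
in $x$. A direct valuation check shows that only the $n=1$ term is a unit, while every $n\ge 2$ coefficient lies in $pW$; this extra factor of $p$ is precisely what is needed to compensate for the loss incurred by $p^s\mid(k_1-k_2)$ yielding only $p^{s-1}\mid(x_1-x_2)$ after division by $p$. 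Assembling the pieces and invoking Lemma \ref{Lsumproduct} completes the reduction.

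The main obstacle I anticipate is the valuation bookkeeping in the expansion of $V_k$, especially for $p=2$, where the hypothesis $c\in 1+qW$ with $q=4$ (rather than merely $c\in 1+pW$) is essential for each non-constant coefficient to gain the required factor of $p$; once that case is handled, the remainder of the argument is routine.
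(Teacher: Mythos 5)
Your proposal is correct and follows essentially the same route as the paper: both isolate the $c$-dependence as $\bigl(A^{(1)}_{(k-l)/p}/A_k\bigr)\cdot\frac{c^{(k+a)/p}-1}{k+a}$ and prove the second factor Lipschitz by expanding $c^{(k+a)/p}$ as a power series in $(k+a)/p$ whose non-constant coefficients lie in $pW$ (your $\exp/\log$ expansion is the paper's binomial expansion $\sum_i\binom{(k+a)/p}{i}k^iq^i$ in different clothing, with the same valuation estimate $v_p(i!)<i/(p-1)$ and the same essential use of $c\in 1+qW$ at $p=2$). The only cosmetic difference is that where you invoke Dwork-style congruences for $U_k=A^{(1)}_{(k-l)/p}/A_k$, the paper gets its Lipschitz property for free from the assumed $c=1$ case via the identity $A^{(1)}_{(n-l)/p}/A_n=(-1)^{se+1}\bigl((n+a)B_n^{\circ}/A_n-1\bigr)$, which avoids any appeal to an external lemma.
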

\begin{proof}
Suppose that the function defined by $n\mapsto B_n^{\circ}/A_n$ is Lipschitz. Let $B_n^{\circ}$ be the coefficients of $\widehat{G}_{a,\cdots,a}^{(\sigma)}(t)$ with respect to $c=1.$ Then suppose $n\mapsto B_{n}^\circ/A_n$ is Lipschitz. Then
$$
n\mapsto \frac{A^{(1)_{\frac{n-l}{p}}}}{A_n}=(-1)^{se+1}\bigg((n+a)\frac{B_n^\circ}{A_n}-1\bigg)
$$ is also Lipschitz by Lemma \ref{Lsumproduct}. Observe that 
$$
\frac{B_n}{A_n}=\frac{B_n^\circ}{A_n}+(-1)^{se+1}\bigg(\frac{c^{\frac{n+a}{p}}-1}{n+a}\bigg)\frac{A^{(1)}_{\frac{n-l}{p}}}{A_n}.
$$ So to prove $n\mapsto B_n/A_n$ is Lipschitz, it suffices to prove $n \mapsto \frac{c^{n/p}-1}{n}$ is Lipschitz(here the value is $0$ when $p\nmid n).$ Write $c=1+kq$ for some $k\in W.$ Then for those $n$ with $p| n$, we have 
$$
\frac{c^{n/p}-1}{n}=\frac{(1+kq)^{n/p}-1}{n}=\frac{1}{n}\sum_{i=1}^\infty \binom{n/p}{i}k^iq^i.
$$ So to prove $n \mapsto \frac{c^{n/p}-1}{n}$ is Lipschitz, it suffices to show that
$$
\frac{1}{n_1}\binom{n_1/p}{i}k^iq^i\equiv \frac{1}{n_2}\binom{n_2/p}{i}k^iq^i \mod p^m
$$ for $n_1\equiv n_2 \mod p^m.$ For $i=1$, they are equal. Suppose $i\geq 2,$ we have
$$
    \frac{1}{n_1}\binom{n_1/p}{i}k^i q^i=\bigg(\frac{n_1}{p}-1\bigg)\cdots\bigg(\frac{n_1}{p}-i+1\bigg)\frac{k^i q^i}{i!p}.
$$
By \cite[\S 5.1 p.49]{W}, we know 
$$
v_p(i!)<\frac{i}{p-1}.
$$ Thus
$$
v_p(\frac{k^i q^i}{i!p})\geq 1.
$$
So
$$
\begin{aligned}
\bigg(\frac{n_1}{p}-1\bigg)\cdots\bigg(\frac{n_1}{p}-i+1\bigg)\frac{k^i q^i}{i!p}&\equiv
\bigg(\frac{n_2}{p}-1\bigg)\cdots\bigg(\frac{n_2}{p}-i+1\bigg)\frac{k^iq^i}{i!p} \mod p^m \\
&=\frac{1}{n_2}\binom{n_2/p}{i}k^iq^i.
\end{aligned}
$$
Therefore, the result follows.
\end{proof}
\begin{lem}\label{s=1}
Assuming $\sigma(t)=t^p$, lemma \ref{BA_congruence} can be reduced to $s=1.$
\end{lem}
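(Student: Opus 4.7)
The plan is to reduce the general-$s$ case to $s=1$ by expressing $B_k/A_k$ as the product of its $s=1$ analogue with an auxiliary Lipschitz factor. The underlying algebraic mechanism is the factorization $X^s - Y^s = (X-Y)\sum_{i=0}^{s-1} X^{s-1-i}Y^i$, which linearizes the $s$-th power appearing in the numerator of $B_k$.

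To set up notation, I would let $\alpha_k := (a)_k/k!$ and $\alpha^\prime_m := (a^\prime)_m/m!$, so that the general-$s$ coefficients factor as $A_k = \alpha_k^s$ and $A^{(1)}_{(k-l)/p} = (\alpha^\prime_{(k-l)/p})^s$ (with the convention $\alpha^\prime_m = 0$ for $m \notin \mathbb{Z}_{\geq 0}$). Denote $r_s(k) := B_k/A_k$ for general $s$, and let $r_1(k) := (\alpha_k - (-1)^e \alpha^\prime_{(k-l)/p})/((k+a)\alpha_k)$ denote the $s=1$ ratio, which is well-defined since $a \notin \mathbb{Z}_{\leq 0}$ forces $\alpha_k \ne 0$ in $\mathrm{Frac}(W)$.

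The key step is to apply the factorization with $X = \alpha_k$ and $Y = (-1)^e \alpha^\prime_{(k-l)/p}$ (noting $(-1)^{se} = ((-1)^e)^s$) to rewrite the numerator $A_k - (-1)^{se} A^{(1)}_{(k-l)/p}$ of $(k+a) B_k$. Dividing by $(k+a)A_k$ then yields the identity
\begin{equation*}
r_s(k) \;=\; r_1(k)\, \sum_{i=0}^{s-1} \tau(k)^i, \qquad \tau(k) := \frac{(-1)^e \alpha^\prime_{(k-l)/p}}{\alpha_k}.
\end{equation*}
A direct rearrangement shows $\tau(k) = 1 - (k+a)r_1(k)$, uniformly in $k$. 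Under the assumption that $r_1$ is Lipschitz, and since $k \mapsto k+a$ is polynomial hence Lipschitz, $\tau$ is Lipschitz by Lemma \ref{Lsumproduct}; therefore each $\tau^i$, their sum, and finally $r_s = r_1\sum_i \tau^i$ are all Lipschitz.

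I do not anticipate a serious obstacle: once the factorization identity is written down, the argument is essentially algebraic. The only point deserving a sanity check is that the product identity for $r_s$ really holds uniformly in $k$---in particular, for $k \not\equiv l \bmod p$, the convention $\alpha^\prime_{(k-l)/p} = 0$ collapses both sides to $1/(k+a)$, so no additional case distinction is required.
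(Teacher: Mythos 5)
Your proposal is correct and follows essentially the same route as the paper: both factor $X^s-Y^s=(X-Y)\sum_{i}X^{s-1-i}Y^i$ with $X=\alpha_k$, $Y=(-1)^e\alpha'_{(k-l)/p}$ to write the general-$s$ ratio as the $s=1$ ratio times $\sum_{i=0}^{s-1}\tau(k)^i$, and both observe that $\tau(k)=1-(k+a)r_1(k)$ is Lipschitz once $r_1$ is, concluding via the sum/product stability of Lipschitz functions. Your explicit check of the degenerate case $k\not\equiv l \bmod p$ is a small bonus the paper leaves implicit.
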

\begin{proof}
Assume that $s=1$ and $\sigma(t)=t^p$  and suppose we have proved that $n\mapsto B_n/A_n$ is Lipschitz.
Then $n\mapsto A^{(1)}_{\frac{n-l}{p}}/A_n$ is also Lipschitz (see the proof of Lemma \ref{c=1}). Now for arbitraty $s\in \mathbb{Z}_{>0}$, we have the function
$$
\begin{aligned}
n\mapsto &\frac{1}{n+a}\Bigg(\frac{A_n^s-((-1)^{e}A_{\frac{n-l}{p}}^{(1)})^s}{A_n^s}\Bigg)\\
&=\frac{1}{n+a}\Bigg(\frac{A_n-(-1)^{e}A_{\frac{n-l}{p}}^{(1)}}{A_n}\Bigg)\sum_{j=0}^{s-1}\Bigg(\frac{(-1)^e A^{(1)}_{\frac{n-l}{p}}}{A_n}\Bigg)^j \\
&=\frac{B_n}{A_n}\sum_{j=0}^{s-1}\Bigg(\frac{(-1)^e A^{(1)}_{\frac{n-l}{p}}}{A_n}\Bigg)^j
\end{aligned}
$$ is also Lipschitz by Lemma \ref{Lsumproduct}.
\end{proof}

By Lemma \ref{c=1} and Lemma \ref{s=1}, from now on, we assume that $\sigma(t)=t^p$ and $s=1.$

\begin{lem}\label{BlAl}
If $a+l=cp^n$ with $p \centernot \mid c$, then
$$
\frac{B_l}{A_l}\equiv {\psi}_p(a+l)-{\psi}_p(1+l)\equiv -(\psi_p(1+l)+\gamma_p) \mod p^n,
$$
\end{lem}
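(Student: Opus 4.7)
The plan is to use Lemmas \ref{c=1} and \ref{s=1} to reduce to the case $s=1$ and $\sigma(t)=t^p$. With these reductions $A_0^{(1)}=1$ and the Frobenius factor $c^{(a+l)/p}$ disappears, so the definition of $B_l$ collapses to
$$
B_l = \frac{1}{a+l}\bigl(A_l - (-1)^{e}\bigr),
$$
and the entire question reduces to a precise evaluation of $A_l=(a)_l/l!$. Because one must eventually divide by $a+l$, which has $p$-adic valuation $n$, I will need $A_l$ correct modulo $p^{2n}$, not merely modulo $p^n$.

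The main calculation is a first-order Taylor expansion of the Pochhammer symbol about $a=-l$. Writing $a = cp^n-l$ (where $c$ now denotes the unit from the hypothesis, not the Frobenius parameter), the factors of $(a)_l$ become $a+i = -(l-i)+cp^n$, so
$$
(a)_l = \prod_{j=1}^{l}(cp^n-j) = (-1)^l\,l!\prod_{j=1}^{l}\Bigl(1-\frac{cp^n}{j}\Bigr).
$$
Since $l<p$ every $j$ is a unit in $W$, and expanding the product gives $A_l \equiv (-1)^l - (-1)^l cp^n\sum_{j=1}^{l}\tfrac{1}{j} \pmod{p^{2n}}$. I then invoke the identity $(-1)^{k-\lfloor k/p\rfloor}=(-1)^e$ from the proof of the preceding lemma, which at $k=l$ reads $(-1)^l=(-1)^e$, so that $A_l - (-1)^e \equiv -(-1)^l cp^n \sum_{j=1}^l 1/j \pmod{p^{2n}}$. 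Dividing by $a+l=cp^n$ and using $1/A_l\equiv(-1)^l\pmod{p^n}$ yields
$$
\frac{B_l}{A_l} \equiv -\sum_{j=1}^{l}\frac{1}{j}\pmod{p^n}.
$$

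It then remains to match this harmonic sum with the stated digamma expression. From the standard functional equation $\psi_p(x+1)-\psi_p(x)=1/x$ for $p\nmid x$ together with $\psi_p(1)=-\gamma_p$, and using $l<p$, one obtains $\psi_p(1+l) = -\gamma_p+\sum_{j=1}^l 1/j$; this is exactly the second stated equality. For the first, I would use the local analyticity relation $\psi_p(x+p^n)\equiv \psi_p(x)\pmod{p^n}$ applied at $x=0$: since $a+l=cp^n$, this gives $\psi_p(a+l)\equiv \psi_p(0)=-\gamma_p\pmod{p^n}$, and combining with the above value of $\psi_p(1+l)$ recovers the first form.

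The only real subtlety in this plan is the bookkeeping of $p$-adic precision: the Taylor expansion of $(a)_l$ must be carried to order $p^{2n}$, since the subsequent division by $a+l$ reduces the precision by exactly $n$. Once that is done correctly everything else is routine, given the standard properties of $\psi_p$ and $\Gamma_p$.
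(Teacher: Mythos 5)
Your computation is essentially the paper's own argument for odd $p$: the paper likewise writes $(-1)^e\bigl(1-cp^n\tfrac{B_l}{A_l}\bigr)=(-1)^l\prod_{1\leq k\leq l}(1-cp^n/k)^{-1}$, expands to first order modulo $p^{2n}$, and identifies the harmonic sum with $\psi_p(1+l)+\gamma_p$. For $p\geq 3$ one has $e=l$ by definition (since $q=p$ forces $l'=l$), so your sign identification and the precision bookkeeping are fine, and the reduction to $s=1$, $c=1$ is indeed already in force at this point in the paper.

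The gap is at $p=2$, which the paper treats in a separate Case II occupying half the proof. Your two key identities both fail there when $n=1$, i.e.\ when $a+l\equiv 2 \pmod 4$. First, $(-1)^l\neq(-1)^e$ in that case: e.g.\ $a\equiv 2\pmod 4$ gives $l=0$ but $l'=2$, $e=1$; and $a\equiv 1\pmod 4$ gives $l=1$ but $l'=3$, $e=2$. (The identity $(-1)^{k-\lfloor k/p\rfloor}=(-1)^e$ you import from the previous lemma was only needed there modulo $p^n=2$, where signs are invisible; here you divide by $a+l$ and need it on the nose.) Second, the local analyticity $\psi_p(x+p^n)\equiv\psi_p(x)\pmod{p^n}$ in the form you use also fails for $p=2$, $n=1$: the paper's own computation gives $\psi_p(2c)-\psi_p(0)\equiv c\equiv 1\pmod 2$, not $0$. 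Concretely, for $a\equiv 2\pmod 4$ (so $l=0$, $e=1$) the correct value is $\tfrac{B_0}{A_0}=\tfrac{1}{2c}(1-(-1)^e)=\tfrac1c\equiv 1\pmod 2$, whereas your generic formula $-\sum_{j=1}^{l}1/j$ returns $0$; similarly for $l=1$, $n=1$ your formula gives $1$ while the true value is $0$. So the statement survives at $p=2$ only because the two anomalies are compensated by the digamma side, and this must be checked case by case as the paper does; your proof as written does not cover it.
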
 
where $\psi_p(z)$ is the $p$-adic digamma function and $\gamma_p$ is the $p$-adic Euler constant (\cite[\S 2.2]{A}).
\begin{proof} \boxed{\text{Case I : } p \neq 2} \medskip

First, if $l=0$, then $e=l=0.$ Therefore we have
$$
\frac{B_l}{A_l}=\frac{B_0}{A_0}=\frac{1}{a}(1-(-1)^{e})=0
$$
and 
$$
{\psi}_p(a+l)-{\psi}_p(1+l)\equiv {\psi}_p(0)-{\psi}_p(1)=0 \mod p^n.
$$
If $l \neq 0$, then
$$
\begin{aligned}
\frac{B_l}{A_l}&=\frac{1}{a+l}\Big(1-(-1)^{e}\frac{1}{A_l}\Big) \\
&=\frac{1}{cp^n}\bigg(1-(-1)^{e}\frac{l!}{(a)_l}\bigg).
\end{aligned}
$$
Arranging the formula above and using $a+l=cp^n,$ we obtain
$$
\begin{aligned}
(-1)^{e}\Big(1-cp^n\frac{B_l}{A_l}\Big)&=(-1)^l\frac{1\cdot2\cdots l}{(l-cp^n)\cdots(1-cp^n)} \\
&=(-1)^l\prod_{1\leq k \leq l}\bigg(\frac{1}{1-cp^n/k}\bigg) \\
&\equiv (-1)^l\bigg(1+\sum_{1\leq k \leq l}\frac{cp^n}{k}\bigg) \mod p^{2n} \\
&\equiv (-1)^{l}\bigg(1+cp^n\sum_{1\leq k \leq l}\frac{1}{k}\bigg) \mod p^{2n} \\
&=(-1)^{l}\bigg(1+cp^n(\psi_p(1+l)+\gamma_p)\bigg).
\end{aligned}
$$ 
Therefore, we have
$$
1-cp^n\frac{B_l}{A_l}\equiv (-1)^{(e-l)}\Big(1+cp^n(\psi_p(1+l)+\gamma_p)\Big) \mod p^{2n}.
$$
Since we assume $p$ is odd, $e$ is equal to $l$. Hence 
$$
\frac{B_l}{A_l}\equiv -(\psi_p(1+l)+\gamma_p)\equiv \psi_p(a+l)-\psi_p(1+l) \mod p^n.
$$
by using the formula in \cite[(2.13) and Theorem 2.4]{A}. \medskip

\boxed{\text{Case II : } p=2} \medskip

Again, we have
$$
\frac{B_l}{A_l}=\frac{1}{a+l}\Big(1-(-1)^{e}\frac{1}{A_l}\Big).
$$
(1) For $a=2^nc$ with $2\centernot \mid c$, we have $l=0.$ Hence
$$
\frac{B_l}{A_l}=\frac{B_0}{A_0}=\frac{1}{a}(1-(-1)^{e})=\frac{1}{2^nc}(1-(-1)^{e}).
$$
If $n\geq 2$, then $l^\prime=0$ and $e=0$. Therefore, we obtain
$$
\frac{B_0}{A_0}=0, \quad \psi_p(a)-\psi_p(1)\equiv 0 \mod p^n
$$
by \cite[(2.13) Case II]{A}. \medskip

If $n=1,$ then $l^\prime=2$ and $e=1$. This implies
$$
\frac{B_0}{A_0}=\frac{1}{2c}(1-(-1))\equiv 1 \mod 2
$$ and

$$
\psi_p(a)-\psi_p(1)= \psi_p(2c)-\psi_p(0)\equiv c \equiv 1 \mod 2.
$$ Hence, in both case, we have
$$
\frac{B_l}{A_l}\equiv {\psi}_p(a+l)-{\psi}_p(1+l) \mod p^n.
$$
(2) For $a+1=2^nc$ with $2 \centernot \mid c$, we have $l=1.$ This implies
$$
\frac{B_l}{A_l}=\frac{B_1}{A_1}=\frac{1}{a+1}(1-(-1)^{e}\frac{1}{A_1})=\frac{1}{2^nc}(1-(-1)^{e}\frac{1}{a}).
$$
If $n\geq 2,$ then $l^\prime=1$ and $e=1.$ We obtain
$$
\begin{aligned}
2^nc\frac{B_1}{A_1}&=(1+\frac{1}{a})\\
&=(1-\frac{1}{1-2^nc}) \\
&\equiv1-(1+2^nc)=-2^nc  \mod 2^{2n},
\end{aligned}
$$ which implies

$$
\frac{B_1}{A_1}\equiv -1 \mod 2^n.
$$

Also, we have
$$
\psi_p(a+1)-\psi_p(2)\equiv \psi_p(0)-\psi_p(2)\equiv -1 \mod 2^n.
$$
If $n=1$, then $l^\prime=3$ and $e=2.$ We have
$$
\begin{aligned}
2c\frac{B_1}{A_1}&=1+\bigg(\frac{1}{1-2c}\bigg)\\
&\equiv 1+(1+2c) \mod 4 \\
&\equiv 0 \mod 4.
\end{aligned}
$$
Hence, we get
$$
 \frac{B_1}{A_1}\equiv0 \mod 2.
$$
On the other hand, we have
$$
\begin{aligned}
\psi_p(a+1)-\psi_p(2)&\equiv 
\psi_p(2c)-\psi_p(2) \\
&\equiv c-1 \\
&\equiv 0 \mod 2.
\end{aligned}
$$
\end{proof}
\begin{lem}\label{B_k_and_B_l}
If $k=l+bp^m$ with $p \centernot \mid b,$ then
$$
\frac{B_k}{A_k}\equiv\frac{B_l}{A_l} \mod p^m.
$$
\end{lem}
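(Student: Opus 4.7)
By Lemma \ref{c=1} and Lemma \ref{s=1} I reduce to $c=1$ and $s=1$. Setting $a+l=cp^n$ with $p\nmid c$ (so $n=v_p(a+l)$ and $k+a=cp^n+bp^m$), the formula reads
$$\frac{B_k}{A_k}=\frac{1}{k+a}\Bigl(1-(-1)^e\frac{\{1\}_k}{\{a\}_k}\Bigr).$$
My strategy parallels the expansion carried out in the proof of Lemma~\ref{BlAl}, pushed one order further. Splitting both $\{\,\}$-products at index $i=l$—no factor is dropped for $1\le i\le l$ since $l<p$ and $a+l\equiv0\pmod p$—one gets $\{a\}_k=(a)_l\prod_{1\le j\le bp^m-1,\,p\nmid j}(cp^n+j)$ and $\{1\}_k=l!\prod_{l+1\le i\le l+bp^m,\,p\nmid i}i$. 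Writing $cp^n+j=j(1+cp^n/j)$ and matching residues modulo $p$ between the remaining portion of $\{1\}_k$ and $\{1\}_{bp^m}$ (the shift by $l$ produces exactly the correction factor $\prod_{r=1}^l(1+bp^m/r)^{-1}$), one arrives at the clean factorization
$$\frac{\{a\}_k}{\{1\}_k}=A_l\cdot\frac{U}{V},\qquad U:=\!\!\!\prod_{\substack{1\le j\le bp^m\\ p\nmid j}}\!\!\!\Bigl(1+\frac{cp^n}{j}\Bigr),\quad V:=\prod_{r=1}^l\Bigl(1+\frac{bp^m}{r}\Bigr).$$

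Substituting and invoking the identity $A_l-(-1)^e=(l+a)B_l=cp^n B_l$ from the $k=l$ case, the difference collapses to
$$\frac{B_k}{A_k}-\frac{B_l}{A_l}=\frac{-bp^m\,U\,B_l+(-1)^e(U-V)}{(k+a)\,A_l\,U}.$$
Since the denominator has $p$-adic valuation $\min(m,n)$ (as $A_l$ and $U$ are units), it suffices to show the numerator has valuation $\ge m+\min(m,n)$. I would take $p$-adic logarithms (legitimate since $U,V\equiv1\pmod p$), obtaining $\log U=\sum_{r\ge1}(-1)^{r-1}(cp^n)^r s_r/r$ with $s_r:=\sum_{p\nmid j,\,1\le j\le bp^m}j^{-r}$, and an analogous series for $\log V$ in terms of $H_l^{(r)}:=\sum_{r'=1}^l r'^{-r}$. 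The Wolstenholme pairing $j\leftrightarrow bp^m-j$ forces $s_1\equiv0\pmod{p^m}$, and combined with the classical Wolstenholme power-sum congruences $\sum_{r'=1}^{p-1}1/r'^r\equiv0\pmod p$ (for $p\ge5$) applied residue-class by residue-class, one obtains $U\equiv1\pmod{p^{n+m}}$ and $V-1\equiv bp^mH_l\pmod{p^{2m}}$. Consequently the numerator is congruent to $-bp^m(B_l+(-1)^eH_l)$ modulo $p^{n+m}$; invoking Lemma~\ref{BlAl} (which gives $B_l/A_l\equiv-H_l\pmod{p^n}$) together with $A_l\equiv(-1)^e\pmod{p^n}$ yields $B_l\equiv-(-1)^eH_l\pmod{p^n}$, so the numerator has valuation $\ge m+n$, completing the argument.

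The main obstacle is that the classical Wolstenholme congruences for $\sum1/r'^r\pmod p$ fail for the small primes $p=2,3$, so these cases will require separate treatment parallel to the case distinction for $p=2$ appearing in Lemma~\ref{BlAl}. A further complication for $p=2$ is that the $p$-adic logarithm only converges when the argument has $v_2>1$, which fails when $n=1$; that edge case must be handled by direct manipulation of the products $U$ and $V$ without appealing to $\log$. The bookkeeping of higher-order Wolstenholme sums $s_r,H_l^{(r)}$ for $r\ge2$ in the generic case $p\ge5$ is routine but tedious.
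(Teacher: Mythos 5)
Your algebra up to the collapsed difference formula is correct (I checked the factorization $\{a\}_k/\{1\}_k=A_l\,U/V$ and the resulting expression for $B_k/A_k-B_l/A_l$), and up to that point your route runs parallel to the paper's Case I, with your product $U$ playing the role of the paper's quotient $\{1\}_{bp^m}/\{a+l\}_{bp^m}$ and Lemma \ref{BlAl} killing the leading term in the same way. But there is a genuine gap: your assertion that the denominator $(k+a)A_lU$ has $p$-adic valuation $\min(m,n)$ is false precisely in the hardest case. When $m=n$ one has $k+a=(b+c)p^m$, and nothing prevents $p\mid(b+c)$; writing $k+a=dp^{m+m'}$ with $m'\geq 1$, the denominator has valuation $m+m'$, so you must show the numerator $-bp^mUB_l+(-1)^e(U-V)$ vanishes modulo $p^{2m+m'}$, not merely modulo $p^{2m}$. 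Your congruences ($U\equiv 1 \bmod p^{2m}$, $V\equiv 1+bp^mH_l \bmod p^{2m}$, $B_l+(-1)^eH_l\equiv 0\bmod p^m$) only deliver valuation $2m$ and fall short by $m'$, which can be arbitrarily large. Closing this requires expanding $U$, $V$ and $B_l$ one order further and exhibiting an exact cancellation of the $p^{2m}$-order terms using $a+l\equiv -bp^m \bmod p^{m+m'}$; this is exactly what the paper's Case II does (its second-order expansions of $A^{(1)}_{bp^{m-1}}/A_{bp^m+l}$ and $1/A_l$, whose cross terms combine into $\frac{dp^{m+m'}c}{2(l!)}\cdot 2l(a+l)\equiv 0$), together with a separate computation for $p=2$, $m=1$ where the pairing $i\leftrightarrow bp^m-i$ degenerates.

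By contrast, the obstacles you do flag (Wolstenholme for $p=2,3$, convergence of the $2$-adic logarithm) are real but secondary, and in the cases where $v_p(k+a)=\min(m,n)$ your argument does go through: there the error terms $O(p^{2m})$ from $V$ and $O(p^{n+m})$ from $U$ are both within the required modulus once one checks the two sub-cases $n<m$ and $n\geq m$ separately. So the proposal is salvageable in outline, but as written it proves the lemma only under the additional hypothesis $v_p(k+a)=\min(m,n)$ and omits the case that constitutes the bulk of the paper's proof.
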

\begin{proof}
Again, we write $a+l=cp^n$ with $p \centernot \mid c.$\medskip

\boxed{\text{Case I : } m\neq n} \medskip

We have that
$$
\begin{aligned}
1-(a+l+bp^m)\frac{B_{l+bp^m}}{A_{l+bp^m}}&=(-1)^{e}\frac{A_{bp^{m-1}}^{(1)}}{A_{l+bp^m}}\\
&\overset{\mathrm{(*)}}{=} (-1)^{e}\frac{\{1\}_{l+bp^m}}{\{a\}_{l+bp^m}} \\
&= (-1)^{e}\frac{\{1\}_{l}}{\{a\}_{l}}\cdot\frac{\{1+l\}_{bp^m}}{\{1\}_{bp^m}}\frac{\{1\}_{bp^m}}{\{a+l\}_{bp^m}} \\
&\overset{\mathrm{(**)}}{\equiv}  (-1)^{e}\frac{1}{A_{l}}\Big(1-bp^m(\psi_p(a+l)-\psi_p(1+l))\Big) \mod p^{2m} \\
&=\Big(1-(a+l)\frac{B_l}{A_l}\Big)\Big(1-bp^m(\psi_p(a+l)-\psi_p(1+l))\Big) \mod p^{2m} 
\end{aligned}
$$
where $(*)$ and $(**)$ follow from \cite[Lemma 3.6]{A} and \cite[Lemma 3.8]{A}, respectively.
By arranging the equation above, we obtain
$$
\begin{aligned}
1-(a+l+bp^m)\frac{B_{l+bp^m}}{A_{l+bp^m}}\equiv 1-(a+l)\frac{B_l}{A_l}-bp^m(\psi_p(a+l)-\psi_p(1+l)) &\text{ mod } p^{2m}, \text{when } n>m\\ 
(&\text{mod } p^{n+m}, \text{when } n<m)
\end{aligned}
$$

$$
\begin{aligned}
\implies(a+l+bp^m)(\frac{B_l}{A_l}-\frac{B_{l+bp^m}}{A_{l+bp^m}})\equiv bp^m\bigg(\frac{B_l}{A_l}-(\psi_p(a+l)-\psi_p(1+l))\bigg). 
\end{aligned}
$$

Therefore, by Lemma \ref{BlAl}, we have
$$
\begin{aligned}
(a+l+bp^m)(\frac{B_l}{A_l}-\frac{B_{l+bp^m}}{A_{l+bp^m}})\equiv 0  &\text{ mod } p^{2m}, \text{when } n>m\\ 
(&\text{mod } p^{n+m}, \text{when } n<m).
\end{aligned}
$$

$$
\implies \frac{B_{l+bp^m}}{A_{l+bp^m}}\equiv \frac{B_l}{A_l}\equiv 0 \mod p^m 
$$
in both cases since $\text{ord}_p(a+l+bp^m)$ is $m$ if $m<n$ ($n$ if $n<m$). \medskip

\boxed{\text{Case II : } m=n} \medskip

We write $a+l=cp^m$, $k=l+bp^m$ and $a+k=(b+c)p^m=dp^{m^\prime+m}$ with $p \centernot \mid d$. Here we can assume $m^\prime\geq 1$; otherwise, use the method in Case I. Then
$$
\frac{B_{l+bp^m}}{A_{l+bp^m}}-\frac{B_l}{A_l}=\frac{c[1-(-1)^{e}\frac{A_{bp^{m-1}}^{(1)}}{A_{bp^m+l}}]-dp^{m^\prime}[1-(-1)^{e}\frac{1}{A_l}]}{cdp^{m+m^\prime}} 
.$$
We claim that the numerator
$$
c[1-(-1)^{e}\frac{A_{bp^{m-1}}^{(1)}}{A_{bp^m+l}}]-dp^{m^\prime}[1-(-1)^{e}\frac{1}{A_l}]\equiv 0 \mod p^{2m+m^\prime}.
$$
\boxed{\text{(1) $p$ is odd or $m\geq 2.$}} \medskip

First, we calculate $A_{bp^{m-1}}^{(1)}/{A_{bp^m+l}}$. We have that 
$$
\frac{A_{bp^{m-1}}^{(1)}}{A_{bp^m+l}}=\frac{\{1\}_{l+bp^m}}{\{a\}_{l+bp^m}}=\frac{\{1\}_{l}}{\{a\}_{l}}\frac{\{1+l\}_{bp^m}}{\{a+l\}_{bp^m}}=\frac{1}{A_l}\frac{\{1+l\}_{bp^m}}{\{a+l\}_{bp^m}}
$$
and 
$$
\begin{aligned}
\frac{\{1+l\}_{bp^m}}{\{a+l\}_{bp^m}}=\frac{\prod\limits_{\substack{1+l\leq i \leq bp^m+l \\ p \centernot \mid i}}i}{\prod\limits_{\substack{1\leq i \leq bp^m \\ p\centernot \mid i}}(cp^m+i)}&=\frac{\prod\limits_{\substack{1\leq i \leq bp^m \\ p \centernot \mid i}}i \cdot \prod\limits_{\substack{bp^m\leq i \leq bp^m+l \\ p \centernot \mid i}}i}{\prod\limits_{\substack{1\leq i \leq l \\ p \centernot \mid i}}i}\cdot \frac{1}{\prod\limits_{\substack{1\leq i \leq bp^m \\ p\centernot \mid i}}(cp^m+i)} \\
&=\prod\limits_{\substack{1\leq i \leq bp^m \\ p\centernot \mid i}}\bigg(\frac{1}{1+cp^m/i}\bigg)\frac{\prod\limits_{1\leq i \leq l}(dp^{m+m^\prime}-a-l+i)}{\prod\limits_{1\leq i \leq l}i}.
\end{aligned}
$$

Since $p$ is odd, or $m \geq 2$ and $p\centernot \mid i,$ we have that
$$
\prod\limits_{\substack{1 \leq i \leq bp^m \\ p \centernot \mid i}}\Big(\frac{1}{1+cp^m/i}\Big)=\prod\limits_{\substack{1 \leq i < \frac{bp^m}{2} \\ p \centernot \mid i}}\Big(\frac{1}{1+cp^m/i}\Big)\Big(\frac{1}{1+cp^m/(bp^m-i)}\Big)  
$$ where $bp^m-i\neq i$. Since 

$$
\prod\limits_{\substack{1 \leq i < \frac{bp^m}{2} \\ p \centernot \mid i}}\Big(\frac{1}{1+cp^m/i}\Big)\Big(\frac{1}{1+cp^m/(bp^m-i)}\Big)=\prod_{i}\frac{1}{1+\frac{cp^m\cdot p^m(b+c)}{i(bp^m-i)}}\equiv 1 \mod p^{2m+m^\prime},
$$
we obtain
$$
\begin{aligned}
\frac{\{1+l\}_{bp^m}}{\{a+l\}_{bp^m}}&\equiv \frac{\prod\limits_{1\leq i \leq l}(dp^{m+m^\prime}-a-l+i)}{\prod\limits_{1 \leq i \leq l}i} \mod p^{2m+m^\prime} \\
&=\frac{(-1)^l\prod\limits_{0 \leq i \leq l-1}(a+i-dp^{m+m^\prime})}{l!} \\
&\equiv \frac{(-1)^l}{l!}\Big[\prod\limits_{i=0}^{l-1}(a+i)+(-dp^{m+m^\prime})\sum\limits_{i=0}^{l-1}(a+i)\Big] \\
&\equiv (-1)^l\Big[(-dp^{m+m^\prime})\frac{la+l(l-1)/2}{l!}+\frac{(a)_l}{l!}\Big] \mod p^{2m+m^\prime}.
\end{aligned}
$$

Multiplying $\frac{1}{A_l}$ on both side, we get
$$
\begin{aligned}
\frac{A_{bp^{m-1}}^{(1)}}{A_{bp^m+l}}&\equiv \frac{(-1)^{l}}{A_l}\Big[A_l+(-dp^{m+m^\prime})\frac{la+l(l-1)/2}{l!}\Big] \mod p^{2m+m^\prime} \\
&= (-1)^{l}\Big[1+\frac{l!}{(a)_l}(-dp^{m+m^\prime})\frac{la+l(l-1)/2}{l!}\Big] \\
&\equiv(-1)^{l}\Big[1+(-1)^l(-dp^{m+m^\prime})\frac{la+l(l-1)/2}{l!}\Big] \mod p^{2m+m^\prime}
\end{aligned}
$$ since $\frac{l!}{(a)_l}\equiv (-1)^l \mod p^m$ (use $a\equiv -l \mod p^m$). \medskip

For $1/A_l,$ since
$$
\frac{(a)_l}{l!}\equiv\frac{-cp^m(l+1)l/2+(-1)^ll!}{l!}=\frac{-cp^m(l+1)}{2(l-1)!}+(-1)^l \mod p^{2m},
$$
we have
$$
\frac{1}{A_l}=\frac{l!}{(a)_l}\equiv \frac{(-1)^l}{1+(-1)^{l+1}\frac{cp^m(l+1)}{2(l-1)!}}\equiv (-1)^l+\frac{cp^m(l+1)}{2(l-1)!} \mod p^{2m}.
$$
Hence the numerator
$$
\begin{aligned}
&c[1-(-1)^{e}\frac{A_{bp^{m-1}}^{(1)}}{A_{bp^m+l}}]-dp^{m^\prime}[1-(-1)^{e}\frac{1}{A_l}] \\
=&-b+(-1)^{e}\bigg(\frac{dp^{m^\prime}}{A_l}-c\frac{A_{bp^{m-1}}^{(1)}}{A_{bp^{m}+l}}\bigg) \\
\equiv&-b+(-1)^{e}\bigg((-1)^{l}b+dp^{m^\prime}\frac{cp^m(l+1)}{2[(l-1)!]}+c(dp^{m+m^\prime})\frac{2la+l(l-1)}{2(l!)}\bigg)\\
\equiv&-b+(-1)^{e}\bigg((-1)^{l}b+\frac{dp^{m+m^\prime}c}{2(l!)}(2l(a+l))\bigg) \\
\equiv&b((-1)+(-1)^{(e+l)}) \mod p^{2m+m^\prime}.
\end{aligned}
$$
Since $p$ is odd or $p=2(m\geq 2)$, we have $e=l$. Hence $(-1)+(-1)^{(e+l)}=0$. So we have the numerator congruent to 0 modulo $p^{2m+m^\prime}$. And this implies
$$
\frac{B_k}{A_k}\equiv \frac{B_l}{A_l} \mod p^m.
$$

\boxed{\text{(2) $p=2$ and $m=1.$}} \medskip

Write $k=l+2b, a+l=2c, a+k=2(b+c)=2^{n+1}d$ with $2 \centernot \mid bc$. Then
$$
\begin{aligned}
\frac{A_{b2^{m-1}}^{(1)}}{A_{b2^m}}&=\frac{1}{A_l}\frac{\{1+l\}_{b2^m}}{\{a+l\}_{b2^m}}\\
&=\frac{1}{A_l}\bigg[\prod\limits_{\substack{1\leq i \leq b2^m \\ 2 \centernot \mid i}}\frac{1}{1+c2^m/i}\bigg]\bigg[\frac{(-1)^l(a-d2^{m+n})_l}{l!}\bigg] \\
&\equiv \frac{1}{A_l}(\frac{1}{1+2c/b})(-1)^{l}\bigg(\frac{(a-d2^{m+n})_l}{l!}\bigg) \mod 2^{2+n}.
\end{aligned}
$$
(Note: Only the term $\Big(\frac{1}{1+2c/b}\Big)$ is different from case (1)). \medskip

Again, we have
$$
\begin{aligned}
&(-b)+(-1)^{e}\bigg(\frac{d2^n}{A_l}-c\frac{A_{b}^{(1)}}{A_{2b}}\bigg) \\
\equiv &(-b)+(-1)^{e}\bigg[(-1)^{l}d2^n+d2^n\frac{c2(l+1)}{2(l-1)!}-c(\frac{b}{b+2c})(-1)^{l}-\\
&c(\frac{b}{b+2c})(-d2^{1+n})\frac{2la+l(l-1)}{2(l!)}\bigg] \mod 4\cdot2^n \\
\equiv&(-b)+(-1)^{e}\bigg[(-1)^{l}d2^n-c(\frac{b}{b+2c})(-1)^{l}+d2^n\frac{c2(l+1)}{2(l-1)!}\\
&+c(d2^{1+n})\frac{2la+l(l-1)}{2(l!)}\bigg] \mod 4\cdot2^n \text{ (since } \frac{b}{b+2c}\equiv 1 \mod 2) \\
\equiv&(-b)+(-1)^{e}\bigg[(-1)^{l}d2^n-c(\frac{b}{b+2c})(-1)^{l}\bigg]\\
=&(-b)+(-1)^{(e+l)}\bigg[d2^n-c(\frac{b}{b+2c})\bigg]. \quad (*)
\end{aligned}
$$
Observe that we have $e+l$ is odd. This is because if $l=0,$ then $l^\prime=2$ and $e=1$. If $l=1,$ then $l^\prime=3$ and $e=2$. Hence, we have 
$$
\begin{aligned}
(*)&=(-b)-\Big(b+c-\frac{bc}{b+2c}\Big) \\
&=\frac{(-2)(b+c)^2}{(b+2c)} \\
&\equiv 0 \mod 4\cdot2^n.
\end{aligned}
$$
Therefore, we have
$$
\frac{B_k}{A_k}\equiv\frac{B_l}{A_l} \mod 2.
$$
\end{proof}
Now we can start the proof of Lemma \ref{BA_congruence} with the assumption $\sigma(t)=t^p$ and $s=1$.
\begin{proof}[(proof of Lemma \ref{BA_congruence})]
If $k\not\equiv l \mod p$, then
$$
\frac{B_k}{A_k}=\frac{1}{k+a}.
$$
So the result follows. Suppose $k\equiv l \mod p.$ We write $a+l=cp^n, k=l+bp^m$ with $p \centernot\mid bc$. It is enough to prove the lemma for the case $k^\prime=k+p^{m^\prime}$ for any $m^\prime\in \mathbb{N}$.\medskip

\boxed{\text{Case I: $m^\prime\leq m$}} \medskip

This follows from Lemma \ref{B_k_and_B_l}, since we have $k\equiv k^\prime\equiv l \mod p^{m^\prime}$.\medskip

\boxed{\text{Case II: $m^\prime> m$ and $n\neq m$ or $n=m$ with $\text{ord}_p(k+a)=m$}} \medskip

Since we have 
$$
\begin{aligned}
1-(k^\prime+a)\frac{B_{k^\prime}}{A_k^\prime}&=(-1)^{e}\frac{A_{\lfloor \frac{k^\prime}{p} \rfloor}^{(1)}}{A_{k^\prime}} \\
&=(-1)^{e}\frac{\{1\}_{l+bp^m+p^{m^\prime}}}{\{a\}_{l+bp^m+p^{m^\prime}}} \\
&=(-1)^{e}\frac{\{1\}_{l+bp^m}}{\{a\}_{l+bp^m}} \frac{\{1+l+bp^m\}_{p^{m^\prime}}}{\{a+l+bp^m\}_{p^{m^\prime}}}  \\
&=(-1)^{e}\Big(\frac{A_{bp^{m-1}}^{(1)}}{A_{l+bp^m}}\Big)\cdot \bigg(\frac{\{1+l+bp^m\}_{p^{m^\prime}}}{\{1\}_{p^{m^\prime}}}\frac{\{1\}_{p^{m^\prime}}}{\{a+l+bp^m\}_{p^{m^\prime}}}\bigg)\\
&\equiv\Big(1-(k+a)\frac{B_k}{A_k}\Big)\Big(1+p^{m^\prime}(\psi_p(1+l+bp^m)+\gamma_p)\Big)\cdot \\ &\quad\Big(1-p^{m^\prime}(\psi_p(a+l+bp^m)+\gamma_p)\Big) \mod p^{2m^\prime} \\
&\equiv\Big(1-(k+a)\frac{B_k}{A_k}\Big)\Big(1+p^{m^\prime}(\psi_p(1+l+bp^m)-\psi_p(a+l+bp^m))\Big)\\
&\overset{\mathrm{(*)}}{\equiv}\Big(1-(k+a)\frac{B_k}{A_k}\Big)\Big(1-p^{m^\prime}\frac{B_l}{A_l}\Big) \mod p^{m+m^\prime}
\end{aligned}
$$
where $(*)$ follows from \cite[(2.13)]{A} and Lemma \ref{BlAl} when $p\neq 2$ or $m\geq 2$. And for $p=2, m=1,$ $(*)$ follows from 
$$
\psi_p(1+l+2b)-\psi_p(a+l+2b)\equiv\psi_p(1+l)-\psi_p(a+l) \mod 2.
$$ 
Therefore
$$
\begin{aligned}
1-(k+a+p^{m^\prime})\frac{B_{k^\prime}}{A_{k^\prime}}&\equiv 1-(k+a)\frac{B_k}{A_k}-p^{m^\prime}\frac{B_l}{A_l}+p^{m\prime}(k+a)\frac{B_k}{A_k}\frac{B_l}{A_l} \mod p^{m+m^\prime} \\
&\equiv 1-(k+a)\frac{B_k}{A_k}-p^{m^\prime}\frac{B_l}{A_l} \mod p^{n^{*}+m^\prime} (n^{*}:=min\{n,m\})
\end{aligned}
$$
We get
$$
(k+a)\Big(\frac{B_k}{A_k}-\frac{B_{k^\prime}}{A_{k^\prime}}\Big)\equiv p^{m^\prime}\Big(\frac{B_{k^\prime}}{A_{k^\prime}}-\frac{B_l}{A_l}\Big)\equiv 0 \mod p^{n^{*}+m^\prime}. 
$$
By assumption in Case II, we have that $\text{ord}_p(k+a)=n^{*}$. Hence we obtain our result
$$
\frac{B_k}{A_k}\equiv \frac{B_{k^\prime}}{A_{k^\prime}} \mod p^{m^\prime}.
$$

\boxed{\text{Case III: $m^\prime> m$ and $n=m$ with $\text{ord}_p(k+a)> m$}} \medskip

In this case, we write $a+l=cp^m, k=l+bp^m, k+a=(b+c)p^m=dp^{n+m}$ and $k^\prime=k+p^{m^\prime}$ with $m\geq 1, n\geq 1, p\centernot \mid bcd.$ Then 
$$
\frac{B_{k^\prime}}{A_{k^\prime}}-\frac{B_k}{A_k}=\frac{-p^{m^\prime}+(-1)^{e}\Big[(dp^{m+n}+p^{m^\prime})\frac{A_{bp^{m-1}}^{(1)}}{A_{l+bp^m}}-(dp^{m+n})\frac{A_{bp^{m-1}+p^{m^\prime-1}}^{(1)}}{A_{l+bp^m+p^{m^\prime}}}\Big]}{(dp^{m+n}+p^{m^\prime})dp^{m+n}}
$$
We claim that the numerator congruent to $0$ modulo ${(dp^{m+n}+p^{m^\prime})p^{m+n}}\cdot p^{m^\prime}.$ For simplicity, we denote the numerator by $(*).$ First, we consider \medskip

(1) For $m^\prime\leq m+n,$ we have that
$$
\frac{A_{bp^{m-1}+p^{m^\prime-1}}^{(1)}}{A_{l+bp^m+p^{m^\prime}}}= \frac{\{1\}_{l+bp^m+p^{m^\prime}}}{\{a\}_{l+bp^m+p^{m^\prime}}}
$$ and

$$
\begin{aligned}
\frac{\{1\}_{l+bp^m+p^{m^\prime}}}{\{a\}_{l+bp^m+p^{m^\prime}}}&=\frac{\prod\limits_{\substack{1\leq i \leq l+bp^m+p^{m^\prime} \\ p \centernot \mid i}}i}{\prod\limits_{\substack{1\leq i \leq l+bp^m+p^{m^\prime} \\ p \centernot \mid i}}a+l+bp^m+p^{m^\prime}-i} \\
&=\bigg(\prod\limits_{\substack{1\leq i \leq l+bp^m+p^{m^\prime} \\ p \centernot \mid i}}\frac{1}{1-(dp^{m+n}+p^{m^\prime})/i}\bigg) (-1)^{l+bp^m+p^{m^\prime}-\lfloor\frac{l+bp^m+p^{m^\prime}}{p}\rfloor}\\
&\equiv\bigg(1+(dp^{m+n}+p^{m^\prime})\sum\limits_{\substack{1\leq i \leq l+bp^m+p^{m^\prime} \\ p \centernot \mid i}}\frac{1}{i}\bigg)(-1)^{l+bp^{m-1}(p-1)} \mod (dp^{m+n}+p^{m^\prime})p^{m^\prime} \\
&\quad(\text{since $m^\prime\leq m+n$ and $p^{m^\prime-1}(p-1)\equiv 0 \mod 2$}) \\
&=\Big[1+(dp^{n+m}+p^{m^\prime})(\psi_p(1+l+bp^m+p^{m^\prime})+\gamma_p)\Big](-1)^{l+bp^{m-1}(p-1)}.
\end{aligned}
$$
Therefore
$$
\frac{A_{bp^{m-1}+p^{m^\prime-1}}^{(1)}}{A_{l+bp^m+p^{m^\prime}}}\equiv \Big[1+(dp^{n+m}+p^{m^\prime})(\psi_p(1+l+bp^m+p^{m^\prime})+\gamma_p)\Big](-1)^{l+bp^{m-1}(p-1)}
$$
$\mod (dp^{m+n}+p^{m^\prime})p^{m^\prime}.$ \medskip

Similarly, we can derive
$$
\frac{A_{bp^{m-1}}^{(1)}}{A_{l+bp^m}}\equiv \Big[1+dp^{n+m}(\psi_p(1+l+bp^m)+\gamma_p)\Big](-1)^{l+bp^{m-1}(p-1)} \mod (dp^{m+n})p^{m^\prime}.
$$ \medskip

Hence, $(*)$ congruent to 
$$
-p^{m^\prime}+(-1)^{(e+l+bp^{m-1}(p-1))}\bigg(p^{m^\prime}+\Big(dp^{m+n}+p^{m^\prime}\Big)dp^{m+n}\Big(\psi_p(1+l+bp^m)-\psi_p(1+l+bp^m+p^{m^\prime})\Big)\bigg)
$$
modulo $p^{m+n+m^\prime}(dp^{m+n}+p^{m^\prime})$. \medskip

Furthermore, since $m^\prime\geq2$ we have
$$
\psi_p(1+l+bp^m)\equiv \psi_p(1+l+bp^m+p^{m^\prime}) \mod p^{m^\prime}.
$$
Therefore, $(*)$ congruent to 
$$
p^{m^\prime}\bigg(1-(-1)^{(e+l+bp^{m-1}(p-1))}\bigg).
$$
Now we discuss case by case. \medskip

$\bullet$ If $p$ is odd, then $(-1)^{(e+l+bp^{m-1}(p-1))}=1$. \medskip

$\bullet$ If $p=2, m\geq 2$, then $(-1)^{(e+l+bp^{m-1}(p-1))}=(-1)^{(e+l)}=1$ ($e=l$ since $m\geq 2$).\medskip

$\bullet$ If $p=2, m=1$, then $(-1)^{(e+l+b)}=1$ since it can again be divided into the following two cases:
$$ \left\{
\begin{aligned}
l&=0, l^\prime=2, e=1, b\equiv 1 \mod 2\\
l&=1, l^\prime=3, e=2, b\equiv 1 \mod 2.\\
\end{aligned}
\right.
$$
Hence, $(*)$ congruent to 0 modulo  $p^{m+n+m^\prime}(dp^{m+n}+p^{m^\prime})$, and this implies
$$
\frac{B_k}{A_k}\equiv\frac{B_{k^\prime}}{A_{k^\prime}} \mod p^{m^\prime}.
$$

We proved the first case.\medskip

(2)For $m^\prime>m+n,$ we have that
$$
\begin{aligned}
&\frac{A_{bp^{m-1}+p^{m^\prime-1}}^{(1)}}{A_{l+bp^m+p^{m^\prime}}} \\
=&\frac{\{1\}_{l+bp^m+p^{m^\prime}}}{\{a\}_{l+bp^m+p^{m^\prime}}} \\
=&\frac{A_{bp^{m-1}}^{(1)}}{A_{l+bp^m}}\bigg(\frac{\{1+l+bp^m\}_{p^{m^\prime}}}{\{a+l+bp^m\}_{p^{m^\prime}}} \bigg)\\
=&\frac{A_{bp^{m-1}}^{(1)}}{A_{l+bp^m}}\bigg(\frac{\{1+l+bp^m\}_{p^{m^\prime}}}{\{1\}_{p^{m^\prime}}}\frac{\{1\}_{p^{m^\prime}}}{\{a+l+bp^m\}_{p^{m^\prime}}}\bigg) \\
\equiv&\frac{A_{bp^{m-1}}^{(1)}}{A_{l+bp^m}}\bigg(1+p^{m^\prime}\Big(\psi_p(1+l+bp^m)+\gamma_p\Big)\bigg)\bigg(1-p^{m^\prime}\Big(\psi_p(a+l+bp^m)+\gamma_p\Big)\bigg) \mod p^{2m^\prime}
\end{aligned}
$$
Since $m^\prime>m+n$ and $\psi_p(a+l+bp^m)+\gamma_p\equiv 0 \mod dp^{m+n}$, the formula above congruent to 
$$
\frac{A_{bp^{m-1}}^{(1)}}{A_{l+bp^m}}\bigg(1+p^{m^\prime}\Big(\psi_p(1+l+bp^m)+\gamma_p\Big)\bigg) \mod (dp^{m+n}+p^{m^\prime})p^{m^\prime}.
$$ 

Substitute this result into $(*)$, we have that
$$
(*)\equiv -p^{m^\prime}+(-1)^{e}\frac{A_{bp^{m-1}}^{(1)}}{A_{l+bp^m}}p^{m^\prime}\bigg(1-dp^{m+n}\Big(\psi_p(1+l+bp^m)+\gamma_p\Big)\bigg) \mod (dp^{m+n}+p^{m^\prime})p^{m^\prime}.
$$

Now we calculate the term $A_{bp^{m-1}}^{(1)}/A_{l+bp^m}.$\medskip

Since 
$$
\begin{aligned}
\frac{A_{bp^{m-1}}^{(1)}}{A_{l+bp^m}}&=\frac{\{1\}_{l+bp^m}}{\{a\}_{l+bp^m}} \\
&= \prod\limits_{\substack{1\leq i \leq l+bp^m \\ p \centernot \mid i}}\bigg(\frac{i}{a+l+bp^m-i}\bigg) \\
&=\bigg(\prod\limits_{\substack{1\leq i \leq l+bp^m \\ p \centernot \mid i}}\frac{1}{1-dp^{m+n}/i}\bigg)(-1)^{l+bp^{m-1}(p-1)} \\
&=\bigg(\prod\limits_{\substack{1\leq i \leq l+bp^m \\ p \centernot \mid i}}\frac{1}{1-dp^{m+n}/i}\bigg)(-1)^{e} \text{\quad \Big($(-1)^{l+bp^{m-1}(p-1)}=(-1)^e$ $\forall p$\Big)  }\\
&\equiv (-1)^{e}\bigg(1+dp^{m+n}\Big(\psi_p(1+l+bp^m)+\gamma_p\Big)\bigg) \mod (p^{m+n})^2,
\end{aligned}
$$

we obtain
$$
\begin{aligned}
(*)&\equiv -p^{m^\prime}+p^{m^\prime}\bigg(1+dp^{m+n}\Big(\psi_p(1+l+bp^m)+\gamma_p\Big)\bigg) \bigg(1-dp^{m+n}\Big(\psi_p(1+l+bp^m)+\gamma_p\Big)\bigg) \\
&\equiv 0 \mod (dp^{m+n}+p^{m^\prime})p^{m+n}p^{m^\prime}.
\end{aligned}
$$
Hence, again, we obtain 
$$
\frac{B_k}{A_k}\equiv\frac{B_{k^\prime}}{A_{k^\prime}} \mod p^{m^\prime}.
$$
Now we prove that $B_k/A_k\in W.$ For $k\not\equiv l\mod p$, $B_k/A_k=1/(k+a)\in W.$
For $k\equiv l \mod p$, it follows from $A_l \in \mathbb{Z}_p^{\times}$ and 
$$
\frac{B_k}{A_k}\equiv \frac{B_l}{A_l} \mod p.
$$ 
\end{proof}
\subsection{Proof of Congruence Relations: End of the Proof}
Here we follow the method used in the paper \cite[\S 3.5]{A} where M. Asakura prove the congruence relations of $p$-adic hypergeometric functions of logarithmic type. \medskip

Here we prove it in a more general setting.
\begin{lem}\label{generalBA}
Let $a\in\mathbb{Z}_p\backslash \mathbb{Z}_{\leq 0}$ and let $B_n\in \mathbb{Z}_p$ for $n\in \mathbb{Z}_{\geq 0}.$ Suppose the map $n\mapsto B_n/A_n$ is a Lipschitz function. Put $B(t):=\sum_{n=0}^\infty B_n t^n$. Then there are Dwork's congruences for $B(t)/F_{a,\cdots,a}(t);$ that is
$$
\frac{B(t)}{F_{a,\cdots,a}(t)}\equiv \frac{B(t)_{<p^n}}{F_{a,\cdots,a}(t)_{<p^n}} \mod p^n W[[t]]
$$ for all $n\in \mathbb{Z}_{\geq 0}$.
\end{lem}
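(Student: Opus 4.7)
To prove the lemma, I plan to reduce it to a single coefficient-wise assertion and then attack that by combining Dwork's classical truncation congruences for $F_{a,\ldots,a}$ with the Lipschitz hypothesis on $C_n := B_n/A_n$.

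The first step is the reduction. Writing $F := F_{a,\ldots,a}$, both $F(t)$ and $F(t)_{<p^n}$ are units in $W[[t]]$ since they have constant term $1$, so the target congruence is equivalent to $B(t) F(t)_{<p^n} \equiv B(t)_{<p^n} F(t) \pmod{p^n W[[t]]}$. Splitting $B = B_{<p^n} + B_{\geq p^n}$ and $F = F_{<p^n} + F_{\geq p^n}$, the difference $B F_{<p^n} - B_{<p^n} F$ simplifies to $B_{\geq p^n} F_{<p^n} - B_{<p^n} F_{\geq p^n}$; taking the coefficient of $t^k$, substituting $B_i = C_i A_i$, and symmetrising via $(i,j)\leftrightarrow(j,i)$ gives
$$
[t^k]\bigl(B F_{<p^n} - B_{<p^n} F\bigr) = \sum_{\substack{i+j=k \\ i \geq p^n > j}} (C_i - C_j) A_i A_j.
$$
The lemma therefore reduces to showing this sum lies in $p^n W$ for every $k \geq 0$.

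The plan for the second step is to combine two ingredients. The first is Dwork's classical truncation theorem (already used in the proof of Corollary \ref{overconvergent}), which in cross-multiplied form reads
$$
F(t) \cdot F^{(1)}(t^p)_{<p^n} \equiv F(t)_{<p^n} \cdot F^{(1)}(t^p) \pmod{p^n},
$$
where $F^{(1)} := F_{a',\ldots,a'}$, together with its analogues at the higher Dwork primes $a',a'',\ldots,a^{(n-1)}$. The second is the Lipschitz hypothesis, which gives $|C_i - C_j|_p \leq |i-j|_p$. I will use the factorisation $A_i = A^{(1)}_{\lfloor i/p \rfloor}(\{a\}_i/\{1\}_i)^s$ (cf.\ \cite[Lemma 3.6]{A}) to regroup the coefficients $A_i A_j$ along the common base-$p$ prefixes of $i$ and $j$, and at each refinement level $1 \leq r \leq n$ apply Dwork's congruence at the $r$-th Dwork prime to produce cancellation matched against the corresponding Lipschitz shift of $C$. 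Telescoping across the tower of Dwork primes yields the desired mod-$p^n$ vanishing, proved by induction on $n$ exactly as in \cite[\S 3.5]{A}, with our abstract Lipschitz sequence $(C_n)$ playing the role of Asakura's concrete $-(\psi_p(1+n) + \gamma_p)$.

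The main obstacle will be the regime $v_p(i-j) < n$, where the Lipschitz bound forces $C_i - C_j$ to have $p$-adic valuation only $v_p(i-j)$, well short of $n$. In this case no individual term of the sum is divisible by $p^n$, and the required divisibility must arise from genuine cancellation between terms with related $A$-products. This is precisely where Dwork's congruence is indispensable: it provides the global identity among products $A_i A_j$ at scale $p^r$ that compensates the deficit left by the Lipschitz bound at the same scale. The bookkeeping is cleanest through an induction on $n$, upgrading at each step by one fresh application of Dwork's congruence combined with the increment of Lipschitz regularity of $C$ at the new scale.
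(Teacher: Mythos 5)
Your first step is correct, and it is a clean variant of the paper's own starting point: where you cross-multiply and extract the coefficient identity $\sum_{i+j=k,\ i\ge p^n>j}(C_i-C_j)A_iA_j\equiv 0 \bmod p^n$, the paper instead reduces the lemma to $S_m=\sum_{i+j=m}\bigl(A_{i+p^n}B_j-A_iB_{j+p^n}\bigr)\equiv 0\bmod p^n$; the two are equivalent reformulations and your derivation is sound. Your announced strategy for the second step --- group index pairs by residue classes modulo powers of $p$, trade the Lipschitz increments of $C$ at scale $p^{n-l}$ against $p^{l}$-divisibility of the grouped $A$-sums, and induct through the tower of Dwork primes --- is also exactly the paper's strategy, which follows \cite[\S 3.5]{A}.

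The gap is in the ingredient you actually name to produce the cancellation. The cross-multiplied full-sum congruence $F(t)\,F^{(1)}(t^p)_{<p^n}\equiv F(t)_{<p^n}\,F^{(1)}(t^p)$ gives one relation per total degree and only controls the aggregate over \emph{all} residue classes; but in the regime $v_p(i-j)<n$ that you yourself single out, the telescoping requires each residue class $i\equiv k\bmod p^{n-l}$ to contribute a multiple of $p^{n}$ on its own. The statement that makes this work is the partial-sum refinement of Dwork's congruence, namely \cite[Lemma 3.12]{A} (restated in the paper):
$$
\sum_{\substack{i+j=m \\ i\equiv k \bmod p^{n-l}}}\Bigl(A_iA_{j+p^{n-1}}-A_jA_{i+p^{n-1}}\Bigr)\equiv 0 \mod p^{l},\qquad 0\le l\le n,
$$
together with the fact that the prefix factors $q_i=A_i/A^{(1)}_{\lfloor i/p\rfloor}$ lie in $W$ and are themselves Lipschitz (\cite[Lemma 3.7]{A}), which is what lets the regrouping interact correctly with the Lipschitz hypothesis on $C$. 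Neither of these is stated in your plan, and the full-sum congruence you do cite cannot substitute for the first; the phrase ``telescoping across the tower of Dwork primes \ldots exactly as in \cite[\S 3.5]{A}'' defers precisely the decisive step. To complete the proof you would need to formulate and apply the residue-class lemma (or its analogue adapted to your sum over $i\ge p^n>j$) explicitly.
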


If we can prove this lemma, then by Lemma \ref{BA_congruence} we obtain the congruence relations for our hypergeometric functions. \medskip

Put $S_m:=\sum_{i+j=m}A_{i+p^n}B_j-A_iB_{j+p^n}$ for $m\in \mathbb{Z}_{\geq 0}.$ We claim that $S_m\equiv 0 \mod p^n$ for $n\in\mathbb{Z}_{\geq}$ since Lemma \ref{generalBA} is equivalent to for any $n, m\in \mathbb{Z}_{\geq 0}$, $S_m\equiv 0 \mod p^n.$ First, we need the following lemmas.
\begin{lem}\label{S_m_1}
$$
S_m\equiv \sum_{i+j=m}(A_{i+p^n}A_j-A_iA_{j+p^n})\frac{B_j}{A_j} \mod p^n.
$$
\end{lem}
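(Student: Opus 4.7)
The plan is to reduce the identity to a direct application of the Lipschitz hypothesis. Since the map $n \mapsto B_n/A_n$ is Lipschitz, its values lie in $W$ and satisfy
$$
\frac{B_{j+p^n}}{A_{j+p^n}} \equiv \frac{B_j}{A_j} \mod p^n W
$$
for every $j \geq 0$, because $(j+p^n)-j = p^n$. Multiplying by $A_{j+p^n} \in W$ (permissible because $p^n W$ is an ideal) this upgrades to
$$
B_{j+p^n} \equiv A_{j+p^n}\,\frac{B_j}{A_j} \mod p^n W.
$$

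The single computational step is then to rewrite each summand of $S_m$ using the tautology $B_j = A_j\cdot (B_j/A_j)$ together with the congruence above, giving
$$
A_{i+p^n}B_j - A_iB_{j+p^n} \equiv A_{i+p^n}A_j\frac{B_j}{A_j} - A_iA_{j+p^n}\frac{B_j}{A_j} = \bigl(A_{i+p^n}A_j - A_iA_{j+p^n}\bigr)\frac{B_j}{A_j} \mod p^n W.
$$
Summing over $i+j = m$ then yields the asserted congruence for $S_m$ modulo $p^n$.

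The only real point requiring attention, and the closest thing to an obstacle, is justifying that $B_j/A_j$ is a well-defined element of $W$, so that multiplication by $A_{i+p^n}$, $A_j$, and $A_{j+p^n}$ is $W$-linear and preserves $p^n W$. Since $a \in \mathbb{Z}_p \setminus \mathbb{Z}_{\leq 0}$ forces $A_j = ((a)_j/j!)^s \neq 0$, the ratio $B_j/A_j$ is at least defined in $\mathrm{Frac}(W)$, and the Lipschitz hypothesis asserts precisely that it lies in $W$. With that in hand, every modular manipulation above is legitimate and the lemma follows.
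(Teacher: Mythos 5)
Your proof is correct and follows essentially the same route as the paper: the paper likewise writes $A_iB_{j+p^n}=A_iA_{j+p^n}\cdot\frac{B_{j+p^n}}{A_{j+p^n}}$, invokes the Lipschitz property to replace $\frac{B_{j+p^n}}{A_{j+p^n}}$ by $\frac{B_j}{A_j}$ modulo $p^n$, and regroups. Your added remark that $B_j/A_j\in W$ (so the multiplications preserve $p^nW$) is exactly the point the paper leaves implicit, and it is correctly justified by the definition of a Lipschitz function.
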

\begin{proof}
\begin{equation*}
\begin{aligned}
S_m&=\sum_{i+j=m}A_{i+p^n}B_j-A_iA_{j+p^n}\frac{B_{j+p^n}}{A_{j+p^n}} \\
&\equiv \sum_{i+j=m}A_{i+p^n}B_j-A_iA_{j+p^n}\frac{B_{j}}{A_{j}} \mod p^n \quad \text{(Lipschitz)}\\
&=\sum_{i+j=m}(A_{i+p^n}A_j-A_iA_{j+p^n})\frac{B_j}{A_j} .
\end{aligned}
\end{equation*}

\end{proof}

\begin{lem}
$$
S_m\equiv \sum_{i+j=m}(A_{\lfloor j/p \rfloor}^{(1)}A_{\lfloor i/p \rfloor+p^{n-1}}^{(1)}-A_{\lfloor i/p \rfloor}^{(1)}A_{\lfloor j/p \rfloor+p^{n-1}}^{(1)})\frac{A_i}{A_{\lfloor i/p \rfloor}^{(1)}}\frac{A_j}{A_{\lfloor j/p \rfloor}^{(1)}}\frac{B_j}{A_j} \mod p^n.
$$
\end{lem}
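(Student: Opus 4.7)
The plan is to substitute the Dwork factorization
\[
A_k = A^{(1)}_{\lfloor k/p\rfloor}\cdot R_k, \qquad R_k := \frac{\{a\}_k}{\{1\}_k},
\]
valid in our current setting $\sigma(t)=t^p,\ s=1$ by \cite[Lemma 3.6]{A}, into the expression supplied by Lemma~\ref{S_m_1}. Since $\lfloor(k+p^n)/p\rfloor = \lfloor k/p\rfloor + p^{n-1}$, the same formula gives $A_{k+p^n} = A^{(1)}_{\lfloor k/p\rfloor + p^{n-1}}\cdot R_{k+p^n}$. The heart of the argument is establishing the congruence $R_{k+p^n}\equiv R_k\pmod{p^n}$ for every $k\in\mathbb{Z}_{\geq 0}$, which then allows the $R$-factors in the two halves of $A_{i+p^n}A_j - A_iA_{j+p^n}$ to be matched up and collected into $R_iR_j = (A_i/A^{(1)}_{\lfloor i/p\rfloor})(A_j/A^{(1)}_{\lfloor j/p\rfloor})$, exactly as the target identity demands.

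For the key congruence, I would split
\[
\{a\}_{k+p^n} = \{a\}_k\cdot\{a+k\}_{p^n}, \qquad \{1\}_{k+p^n} = \{1\}_k\cdot\{1+k\}_{p^n},
\]
so that $R_{k+p^n} = R_k\cdot \{a+k\}_{p^n}/\{1+k\}_{p^n}$. Applying \cite[Lemma 3.8]{A} to numerator and denominator separately yields
\[
\frac{\{a+k\}_{p^n}}{\{1\}_{p^n}} \equiv 1+p^n(\psi_p(a+k)+\gamma_p),\quad \frac{\{1+k\}_{p^n}}{\{1\}_{p^n}} \equiv 1+p^n(\psi_p(1+k)+\gamma_p) \pmod{p^{2n}},
\]
whose ratio is $\equiv 1\pmod{p^n}$. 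Since $R_k\in W$, this produces $R_{k+p^n}\equiv R_k\pmod{p^n}$, as required.

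Finally, feeding the factorisation into Lemma~\ref{S_m_1}, each summand expands as
\[
A_{i+p^n}A_j - A_iA_{j+p^n} = A^{(1)}_{\lfloor i/p\rfloor+p^{n-1}}A^{(1)}_{\lfloor j/p\rfloor}R_{i+p^n}R_j - A^{(1)}_{\lfloor i/p\rfloor}A^{(1)}_{\lfloor j/p\rfloor+p^{n-1}}R_iR_{j+p^n},
\]
which, after replacing $R_{i+p^n}$ by $R_i$ and $R_{j+p^n}$ by $R_j$ modulo $p^n$ (all other factors lie in $W$), collapses to $(A^{(1)}_{\lfloor j/p\rfloor}A^{(1)}_{\lfloor i/p\rfloor+p^{n-1}} - A^{(1)}_{\lfloor i/p\rfloor}A^{(1)}_{\lfloor j/p\rfloor+p^{n-1}})R_iR_j$. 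Multiplying by $B_j/A_j$ and summing over $i+j=m$ produces the claimed identity. The only genuinely nontrivial ingredient is the $R$-congruence proved in the middle paragraph; the rest is algebraic substitution, so I expect that congruence (together with keeping track of the fact that $R_k\in W$ rather than in some larger ring) to be the main obstacle.
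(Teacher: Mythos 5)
Your argument is correct and is essentially the paper's proof: the identity $R_{k+p^n}\equiv R_k\pmod{p^n}$ is, after multiplying by $A^{(1)}_{\lfloor k/p\rfloor+p^{n-1}}\in W$, exactly the congruence $A_{k+p^n}A^{(1)}_{\lfloor k/p\rfloor}/A_k\equiv A^{(1)}_{\lfloor k/p\rfloor+p^{n-1}}\pmod{p^n}$ that the paper inserts into Lemma~\ref{S_m_1} after multiplying and dividing each term by $A^{(1)}_{\lfloor i/p\rfloor}A^{(1)}_{\lfloor j/p\rfloor}$. The only difference is that the paper quotes this congruence directly from \cite[Lemma 3.8]{A}, whereas you re-derive it from the multiplicativity of $\{\cdot\}_k$ and the digamma expansion; both routes then conclude identically using that $A_i/A^{(1)}_{\lfloor i/p\rfloor}$, $A_j/A^{(1)}_{\lfloor j/p\rfloor}$ and $B_j/A_j$ lie in $W$.
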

\begin{proof}
By Lemma \ref{S_m_1}, we know that
$$
\begin{aligned}
S_m&\equiv\sum_{i+j=m}(A_{i+p^n}A_j-A_iA_{j+p^n})\frac{B_j}{A_j} \mod p^n \\
&=\sum_{i+j=m}\bigg(\frac{A_{i+p^n}A^{(1)}_{\lfloor i/p\rfloor}A^{(1)}_{\lfloor j/p\rfloor}}{A_i}-\frac{A_{j+p^n}A^{(1)}_{\lfloor i/p\rfloor}A^{(1)}_{\lfloor j/p\rfloor}}{A_j}\bigg)\frac{A_i}{A^{(1)}_{\lfloor i/p\rfloor}}\frac{A_j}{A^{(1)}_{\lfloor j/p\rfloor}}\frac{B_j}{A_j}. \\
\end{aligned}
$$
By \cite[Lemma 3.8]{A}, we have
$$
\frac{A_{i+p^n}A^{(1)}_{\lfloor i/p \rfloor}}{A_i}\equiv A^{(1)}_{\lfloor i/p \rfloor +p^{n-1}}, \quad \frac{A_{j+p^n}A^{(1)}_{\lfloor j/p \rfloor}}{A_j}\equiv A^{(1)}_{\lfloor j/p \rfloor +p^{n-1}} \mod p^n.
$$
Since $A_i/A^{(1)}_{\lfloor i/p\rfloor}, A_j/A^{(1)}_{\lfloor j/p\rfloor}$ and $B_j/A_j$ are all in $W$ (\cite[Lemma 3.8]{A} and assumption), we have $S_m$ is congruent to
$$
\sum_{i+j=m}(A_{\lfloor j/p \rfloor}^{(1)}A_{\lfloor i/p \rfloor+p^{n-1}}^{(1)}-A_{\lfloor i/p \rfloor}^{(1)}A_{\lfloor j/p \rfloor+p^{n-1}}^{(1)})\frac{A_i}{A_{\lfloor i/p \rfloor}^{(1)}}\frac{A_j}{A_{\lfloor j/p \rfloor}^{(1)}}\frac{B_j}{A_j} \mod p^n.
$$
\end{proof}
Now we start the proof of Lemma \ref{generalBA}.

\begin{proof}[(proof of Lemma \ref{generalBA})] \medskip

Put
$$
q_i:=\frac{A_i}{A_{\lfloor i/p \rfloor}^{(1)}}, \quad A(i,j):=A_i^{(1)}A_j^{(1)}, \quad A^{*}(i,j):=A(j,i+p^{n-1})-A(i,j+p^{n-1}) 
$$
$$
B(i,j):=A^{*}(\lfloor i/p \rfloor , \lfloor j/p \rfloor).
$$

Put $m=pt+l$ with $l\in \{0,1,\cdots, p-1\}.$
Then
$$
\begin{aligned}
S_m &\equiv \sum_{i+j=m}B(i,j)q_iq_j\frac{B_j}{A_j} \mod p^n \\
&=\sum\limits_{i=0}^{p-1}\sum\limits_{k=0}^{\lfloor (m-i)/p \rfloor}B(i+kp, m-(i+kp))q_{i+kp}q_{m-(i+kp)}\frac{B_{m-(i+kp)}}{A_{m-(i+kp)}} \\
&= \sum\limits_{k=0}^{t}\sum\limits_{i=0}^{l}B(i+kp, m-(i+kp))q_{i+kp}q_{m-(i+kp)}\frac{B_{m-(i+kp)}}{A_{m-(i+kp)}} \\
&\quad + \sum\limits_{k=0}^{t-1}\sum\limits_{i=l+1}^{p-1}B(i+kp, m-(i+kp))q_{i+kp}q_{m-(i+kp)}\frac{B_{m-(i+kp)}}{A_{m-(i+kp)}} \\
&=\sum\limits_{k=0}^{t}A^{*}(k,t-k)\overbrace{\bigg(\sum\limits_{i=0}^{l}q_{i+kp}q_{m-(i+kp)}\frac{B_{m-(i+kp)}}{A_{m-(i+kp)}}}^{P_k}\bigg) \\
&\quad + \sum\limits_{k=0}^{t-1}A^{*}(k,t-k-1)\underbrace{\bigg(\sum\limits_{i=l+1}^{p-1}q_{i+kp}q_{m-(i+kp)}\frac{B_{m-(i+kp)}}{A_{m-(i+kp)}}}_{Q_k}\bigg).
\end{aligned}
$$

We show that the first term vanishes modulo $p^n$. \medskip

It follows from assumption and \cite[Lemma 3.7]{A} that $B_k/A_k, q_k\in W$ for all $k\in\mathbb{Z}_{\geq 0}$ and
$$
k\equiv k^\prime \mod p^i \quad \Longrightarrow \frac{B_k}{A_k}\equiv \frac{B_{k^\prime}}{A_{k^\prime}}, q_k\equiv q_{k^\prime} \mod p^i.
$$
 
Therefore, we have
\begin{equation}
k\equiv k^\prime \mod p^i \quad \Longrightarrow \quad P_k \equiv P_{k^\prime} \mod p^{i+1}.
\end{equation}

Then one can write
$$
\sum\limits_{k=0}^{s}A^{*}(k,s-k)P_k\equiv \sum\limits_{i=0}^{p^{n-1}-1}P_i\overbrace{\bigg(\sum\limits_{k\equiv i \text{ mod } p^{n-1}}A^{*}(k,s-k)\bigg)}^{(*)} \mod p^n.
$$

Let us recall Lemma 3.12 in \cite{A}.
\begin{lem}
For all $m,k,s \in \mathbb{Z}_{\geq 0}$ and $0\leq l \leq n$, then
$$
\sum\limits_{\substack{i+j=m \\ i\equiv k \text{ mod } p^{n-l}}}A_iA_{j+p^{n-1}}-A_jA_{i+p^{n-1}}\equiv 0 \mod p^l.
$$
\end{lem}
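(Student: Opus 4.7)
The plan is to prove the stated identity by induction on $n$, the base case $n=0$ being vacuous (since then $l=0$ and the congruence is modulo $1$). The crucial tool is the Dwork decomposition
$$
A_i=q_i\,A^{(1)}_{\lfloor i/p\rfloor},\qquad q_i\in W,\qquad q_i\equiv q_{i'}\pmod{p^m}\text{ whenever }i\equiv i'\pmod{p^m},
$$
which is Lemma~3.7/3.8 in \cite{A} (and which has already been invoked in the preceding subsection). Together with the Lipschitz behavior of $q$, this will let us pass from a bilinear sum involving $A$'s at level $n$ to the analogous sum involving $A^{(1)}$'s at level $n-1$, where the inductive hypothesis applies.

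For the inductive step, I would write $i=pu+r$ and $j=pv+s$ with $r,s\in\{0,\ldots,p-1\}$, so that $\lfloor i/p\rfloor=u$ and $\lfloor j/p\rfloor=v$. Using $q_{i+p^{n-1}}\equiv q_i$ and $q_{j+p^{n-1}}\equiv q_j$ modulo $p^{n-1}$, one obtains
$$
A_iA_{j+p^{n-1}}-A_jA_{i+p^{n-1}}\equiv q_iq_j\bigl(A^{(1)}_uA^{(1)}_{v+p^{n-2}}-A^{(1)}_vA^{(1)}_{u+p^{n-2}}\bigr)\pmod{p^{n-1}}.
$$
For $l\le n-1$ this precision is already enough. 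I would then group the outer sum according to the residue $r$ of $i$ modulo $p$; the condition $i\equiv k\pmod{p^{n-l}}$ (with $n-l\ge 1$) fixes $r\equiv k\pmod p$ and turns into $u\equiv (k-r)/p\pmod{p^{n-1-l}}$. Inside each group the Lipschitz property of $q_iq_j$ allows one to replace $q_iq_j$ by a constant modulo $p^{n-l-1}$, after which the remaining inner sum is precisely of the form covered by the inductive hypothesis applied to the coefficients $A^{(1)}$ with parameter $a'$ (at level $n-1$, for a suitably shifted $m'$ and residue class). This yields the congruence modulo $p^l$.

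The main obstacle is the extremal case $l=n$, where there is no congruence restriction on $i$ and one needs a full $p^n$ of precision. Here the congruence $q_{i+p^{n-1}}\equiv q_i\pmod{p^{n-1}}$ falls one power of $p$ short, so the naive argument above only produces $p^{n-1}$. To gain the extra factor of $p$ I would exploit the symmetry of the sum under $(i,j)\leftrightarrow(j,i)$: the expression $A_iA_{j+p^{n-1}}-A_jA_{i+p^{n-1}}$ is antisymmetric in $(i,j)$, while the Lipschitz error in replacing $q_{i+p^{n-1}}q_j-q_iq_{j+p^{n-1}}$ by $0$ modulo $p^{n-1}$ is itself antisymmetric and, after a careful expansion using $q_{k+p^{n-1}}-q_k\equiv p^{n-1}\partial q_k\pmod{p^n}$ for a suitable discrete derivative, contributes only terms that cancel in pairs across the diagonal $i=j$. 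Combined with the inductive statement at level $n-1$ applied with $l'=n-1$, this supplies the missing power of $p$ and closes the induction.
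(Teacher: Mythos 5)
The paper itself offers no proof of this statement: it is quoted verbatim as Lemma 3.12 of \cite{A} and used as a black box in the proof of Lemma \ref{generalBA}, so there is no in-paper argument to compare yours against; I can only assess your proposal on its own terms. Your skeleton --- induction on $n$ via the factorization $A_i=q_iA^{(1)}_{\lfloor i/p\rfloor}$ with $q$ Lipschitz, reducing the level-$n$ sum to the analogous level-$(n-1)$ sum in the coefficients $A^{(1)}$ of the Dwork-prime parameter --- is the classical Dwork argument and is the right one. But you have the difficulty exactly inverted. The case $l=n$ that you single out as ``the main obstacle'' is trivial: there is then no congruence condition on $i$, the index set $\{i+j=m\}$ is symmetric under $(i,j)\mapsto(j,i)$, and the summand $A_iA_{j+p^{n-1}}-A_jA_{i+p^{n-1}}$ is antisymmetric, so the sum is \emph{identically zero}, not merely zero modulo $p^n$. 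No discrete derivative or pairing of error terms is needed there.

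The genuine gap is in the range $n/2<l\le n-1$. After your (correct) reduction modulo $p^{n-1}$ to $\sum q_iq_j\bigl(A^{(1)}_uA^{(1)}_{v+p^{n-2}}-A^{(1)}_vA^{(1)}_{u+p^{n-2}}\bigr)$ over $u\equiv k_1 \bmod p^{(n-1)-l}$, you propose to replace $q_iq_j$ by a single constant on the whole class and then invoke the inductive hypothesis. The Lipschitz property only gives $q_iq_j\equiv C\bmod p^{n-l}$ there (you even wrote $p^{n-l-1}$), so the discarded error $\sum(q_iq_j-C)T_{u,v}$ is a priori only divisible by $p^{n-l}$, which is weaker than the target $p^l$ as soon as $l>n/2$; for $l=n-1$ you would be discarding terms divisible only by $p$. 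To close this you need the successive-refinement (Abel summation) device: split the class into subclasses modulo $p^{n-1-\lambda}$ for $\lambda=0,1,\dots,l$ in turn, replacing $q_iq_j$ by a constant on each subclass at each stage; the Lipschitz defect $p^{n-1-\lambda}$ then multiplies a partial sum that the inductive hypothesis (applied at level $n-1$ with parameter $\lambda$) makes divisible by $p^{\lambda}$, so every error is divisible by $p^{n-1}\ge p^l$, and the surviving term is $C_{k_1}$ times a sum divisible by $p^l$. This is precisely the manipulation the paper carries out with $P_k$ and $A^*(k,s-k)$ in its proof of Lemma \ref{generalBA}, so the missing ingredient is standard and available --- but as literally written your inner step fails for large $l$.
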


Using this lemma, we obtain $(*)$ is $0$ modulo $p$. Hence, by (3.1) again, we can write
$$
\sum\limits_{k=0}^{s}A^{*}(k,s-k)P_k\equiv \sum\limits_{i=0}^{p^{n-2}-1}P_i\overbrace{\bigg(\sum\limits_{k\equiv i \text{ mod } p^{n-2}}A^{*}(k,s-k)\bigg)}^{(**)} \mod p^n.
$$

It follows from Lemma 3.9 that $(**)$ is $0$ modulo $p^2$, so
$$
\sum\limits_{k=0}^{s}A^{*}(k,s-k)P_k\equiv \sum\limits_{i=0}^{p^{n-3}-1}P_i\bigg(\sum\limits_{k\equiv i \text{ mod } p^{n-3}}A^{*}(k,s-k)\bigg) \mod p^n.
$$
Continuing the same discussion, we have
$$
\sum\limits_{k=0}^{s}A^{*}(k,s-k)P_k\equiv \sum\limits_{k=0}^{s}A^{*}(k,s-k)P_0 =0 \mod p^n.
$$
Similarly, we can show the vanishing of the second term
$$
\sum\limits_{k=0}^{s-1}A^{*}(k,s-k)Q_k\equiv 0 \mod p^n.
$$
Hence $S_m \equiv 0 \mod p^n.$ And this is the end of the proof.
\end{proof}

\section{Transformation Formulas}
In this section, we will introduce two conjectures and prove these conjectures in a particular case. The first one is ``Transformation Formulas between $p$-adic hypergeometric functions $\widehat{\mathscr{F}}_{a,...,a}^{(\sigma)}(t)$ and $p$-adic hypergeometric functions of logarithmic type''
which will be discussed in Section 4.3. The second one is ``Transformation formula on Dwork's $p$-adic Hypergeometric Functions'' which is discussed in Section 4.4.  
\subsection{Hypergeometric Curves and Hypergeometric Curves of Gauss Type}
The main reference of this section is \cite[\S 4.1, \S 4.2, \S 4.6]{A} \medskip
\subsubsection{Hypergeometric Curves}\label{Hypergeometric Curves}
Let $W=W(\overline{\mathbb{F}}_p)$ be the Witt ring of $\overline{\mathbb{F}}_p$ and $R=W[t,(t-t^2)^{-1}].$ Let $N\geq 2$ be an integer and a prime $p>N.$ We denote by $\mathbb{P}^1_R(Z_0,Z_1)$ the projective line over R with homogeneous coordinate $(Z_0,Z_1).$ Then we 
define $U$ to be 
$$
{\rm Spec}R[u,v]/((1-u^N)(1-v^N)-t)\subset Y:=\overline{U}
$$
where $Y$ is the closure in $\mathbb{P}^1_R\times \mathbb{P}^1_R.$
 It is called a \textit{hypergeometric curves} over $R.$ The morphism $Y \rightarrow {\rm Spec}R$ is smooth projective with connected fibers of relative dimension one and the genus of a geometric fiber is $(N-1)^2.$ \medskip
 
\begin{lem}
There is a morphism
$$
\overline{f}: \overline{Y} \rightarrow \mathbb{P}^1_W=\mathbb{P}^1_W(T_0,T_1)
$$ of smooth projective $W$-schemes satisfying \medskip

$(1)$ Let $S:={\rm Spec}R={\rm Spec}W[t,(t-t^2)^{-1}] \subset \mathbb{P}^1_W$ with $t:=T_1/T_0.$ Then $Y=\overline{f}^{-1}(S)\rightarrow S$ is the hypergeometric curve. \medskip

$(2)$ $\overline{f}$ has a semistable reduction at $t=0.$ The fiber $D:=\overline{f}^{-1}(t=0)$ is a relative simple NCD, and the multiplicities of the components are one.
\end{lem}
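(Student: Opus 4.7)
The plan is to construct $\overline{Y}$ as a suitable compactification of the affine hypergeometric curve and verify both the smoothness of the total space and the NCD structure of the special fiber at $t=0$. First, I would take the closure of $U$ in $\mathbb{P}^1_W(Z_0,Z_1)\times\mathbb{P}^1_W(W_0,W_1)\times\mathbb{P}^1_W(T_0,T_1)$, where $u=Z_0/Z_1$, $v=W_0/W_1$ and $t=T_1/T_0$. The homogenization of $(1-u^N)(1-v^N)=t$ is the trihomogeneous equation
$$
T_0(Z_1^N-Z_0^N)(W_1^N-W_0^N)=T_1 Z_1^N W_1^N
$$
of tridegree $(N,N,1)$. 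Projection onto the last factor gives a projective morphism $\overline{f}:\overline{Y}^{(0)}\to\mathbb{P}^1_W$, and by construction its restriction over $S={\rm Spec}\,R$ coincides with the original hypergeometric curve $Y\to S$, which establishes~(1).

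Next, I would check that the total space is smooth over $W$. On the central affine chart $Z_1=W_1=T_0=1$ the defining equation $(1-Z_0^N)(1-W_0^N)-T_1=0$ has partial derivative $-1$ with respect to $T_1$, so smoothness is immediate on this chart, in particular along the fiber over $t=0$. On the boundary charts corresponding to $u=\infty$ or $v=\infty$, a direct partial-derivative computation at the finitely many candidate singular points shows that the only possible singularities occur over $t=\infty$; these can be resolved by a standard sequence of blow-ups supported in $\overline{f}^{-1}(\infty)$, which does not affect the fibers over $t=0$. This produces a smooth projective $W$-scheme $\overline{Y}$ equipped with a morphism $\overline{f}:\overline{Y}\to\mathbb{P}^1_W$ extending $Y\to S$.

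For~(2), I would analyze the fiber over $T_1=0$. The defining equation degenerates to $(Z_1^N-Z_0^N)(W_1^N-W_0^N)=0$, which splits as the union of the $N$ ``vertical'' divisors $\{u=\zeta^i\}$ and the $N$ ``horizontal'' divisors $\{v=\zeta^j\}$, where $\zeta$ is a primitive $N$-th root of unity (present in $W$ since $p>N$). Each component is a smooth $\mathbb{P}^1_W$, and a vertical component meets a horizontal component at the single point $(\zeta^i,\zeta^j)$; transversality there follows from the smoothness of the total space combined with the fact that the two defining factors are coprime linear forms at that point. Since $(Z_1^N-Z_0^N)(W_1^N-W_0^N)$ is squarefree (using $p\nmid N$), each component occurs with multiplicity one, so $D=\overline{f}^{-1}(t=0)$ is a reduced relative simple normal crossing divisor, as required.

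The main technical obstacle is to rule out unexpected singularities of $\overline{Y}$ along the ``boundary'' of the fiber at $t=0$, namely at the points $u=\infty,\ v=\zeta^j$ and $u=\zeta^i,\ v=\infty$ where the closure meets the divisors at infinity of $\mathbb{P}^1\times\mathbb{P}^1$. Once these chart-level computations confirm smoothness of the total space in a neighborhood of the $t=0$ fiber, the NCD and multiplicity-one statements follow directly from the squarefree factorization of the defining equation at $t=0$, which is the essential input used in the $p$-adic regulator arguments of the rest of the paper.
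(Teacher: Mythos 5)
Your construction is essentially the standard one and is sound; note, however, that the paper itself gives no argument here at all — its ``proof'' is the single line that this is \cite[Lemma 4.1]{A} — so what you have written is a genuine proof sketch where the paper only cites. Comparing the two: the citation buys brevity, while your direct approach makes visible exactly why the $t=0$ fiber is semistable, which is the content actually used later. A few points in your sketch deserve tightening before it is a complete proof. First, the closure of $U$ in $\mathbb{P}^1\times\mathbb{P}^1\times\mathbb{P}^1$ need not a priori coincide with the hypersurface cut out by your trihomogeneous equation; it does here because the complement of the affine chart inside that hypersurface is one-dimensional, so the hypersurface is irreducible and equals $\overline{U}$ — this should be said. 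Second, the resolution over $t=\infty$ is only asserted: you should record that at the bad points (e.g.\ $u=\infty$, $v=\zeta^j$, $t=\infty$) the local equation has the explicit form $xy=\mathrm{unit}\cdot u'^N$, an $A_{N-1}$-type singularity admitting a standard chain of blow-ups, and that since these blow-ups are supported in $\overline{f}^{-1}(\infty)$ they do not disturb claims $(1)$ and $(2)$; quoting a general resolution theorem for arithmetic threefolds would be overkill and less convincing. Third, for $(2)$ the cleanest route at a crossing point $(\zeta^i,\zeta^j)$ is to linearize: since $p>N$, $N$ is a unit and $\mu_N\subset W$, so $1-u^N=(\mathrm{unit})\cdot(u-\zeta^i)+O((u-\zeta^i)^2)$ and likewise in $v$, giving the local normal form $xy=t$; this single computation simultaneously yields smoothness of the total space there, reducedness of both branches, and the simple normal crossing, and it also shows the two boundary points $(\zeta^i,\infty)$ and $(\infty,\zeta^j)$ of the $t=0$ fiber lie on only one component each. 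With these additions your argument is complete and consistent with what \cite[Lemma 4.1]{A} establishes.
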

\begin{proof}
This is \cite[Lemma 4.1]{A}.
\end{proof}

Let $K={\rm Frac}W(\overline{\mathbb{F}}_p).$ For a $W$-scheme $Z$ and a $W$-algebra $R,$ we write $Z_R=Z\times_W R.$ The group $\mu_N(K)\times \mu_N(K)$ acts on $Y$ in the following way 
$$
[\zeta, \nu]\cdot (x,y,t)=(\zeta x, \nu y, t), \quad (\zeta, \nu)\in \mu_N\times\mu_N.
$$ One has the eigen decomposition
$$
H^1_{\rm dR}(Y_K/S_K)=\bigoplus_{i=1}^{N-1}\bigoplus_{j=1}^{N-1}H^1_{\rm dR}(Y_K/S_K)(i,j)
$$ where $H^1_{\rm dR}(Y_K/S_K)(i,j)$ denote the subspace on which $(\zeta, \nu)$ acts by multiplication by $\zeta^i\nu^j$ for all $(\zeta, \nu).$ Then each eigenspace $H^1_{\rm dR}(Y_K/S_K)(i,j)$ is free of rank $2$ over $\mathcal{O}(S_K)$. Put  
\begin{equation}
a_i:=1-\frac{i}{N}, \quad b_j:=1-\frac{j}{N}, 
\end{equation}
\begin{equation}
   \omega_{i,j}:=N\frac{x^{i-1}j^{j-N}}{1-x^N}dx= -N\frac{x^{i-N}y^{j-1}}{1-y^N}dy,
\end{equation} and
\begin{equation}
   \eta_{i,j}:=\frac{1}{x^N-1+t}\omega_{i,j}=Nt^{-1}x^{i-N}y^{j-N-1}dy
\end{equation} for integers $i,j$ such that $1\leq i, j\leq N-1.$ Then they forms a $\mathcal{O}(S_K)$-free basis of $H^1_{\rm dR}(Y_K/S_K)(i,j)$ (\cite[Lemma 2.2]{A2}). \medskip

Let 
$$
F_{a,b}(t)=\sum_{i=0}^\infty \frac{(a)_i}{i!}\frac{(b)_i}{i!}t^i \in K[[t]]
$$ be the hypergeometric series. Put
\begin{equation}
    \Tilde{\omega}_{i,j}:=\frac{1}{F_{a_i,b_j}(t)}\omega_{i,j}, \quad \Tilde{\eta}_{i,j}:=-t(1-t)^{a_i+b_j}(F^\prime_{a_i,b_j}(t)\omega_{i,j}+b_jF_{a_i,b_j}(t)\eta_{i,j}).
\end{equation}

\begin{lem}\label{ker_nabla}
Let $\nabla_{i,j}$ be the connection on the eigen component $H_{i,j}:=K((t))\otimes_{\mathcal{O}_S}H^1_{\rm dR}(Y_K/S_K)(i,j).$ Then ${\rm Ker} \nabla_{i,j}=K\widetilde{\eta}_{i,j}.$ Moreover let $\overline{\nabla}_{i,j}$ be the connection on $H_{i,j}/K((t))\widetilde{\eta}_{i,j}$ induced from $\nabla_{i,j}.$ Then ${\rm Ker} \overline{\nabla}_{i,j}=K\widetilde{\omega}_{i,j}.$
\end{lem}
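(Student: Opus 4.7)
The plan is to reduce both assertions to direct computations with the Gauss--Manin connection on the rank-two module $H_{i,j}$ in the basis $(\omega_{i,j}, \eta_{i,j})$. The first step is to work out explicitly the connection matrix
$$
\nabla_{i,j}\!\begin{pmatrix}\omega_{i,j}\\ \eta_{i,j}\end{pmatrix} = M(t)\begin{pmatrix}\omega_{i,j}\\ \eta_{i,j}\end{pmatrix} dt, \qquad M(t)\in M_2(K(t)).
$$
Using the defining equation $(1-x^N)(1-y^N)=t$ of the hypergeometric curve and the formulas (4.3), (4.4) for $\omega_{i,j}$ and $\eta_{i,j}$, one differentiates with respect to $t$ and reduces modulo relative exact forms, producing a rank-two hypergeometric system whose associated scalar Picard--Fuchs equation is the Gauss equation
$$
t(1-t)F'' + (1-(a_i+b_j+1)t)F' - a_i b_j F = 0
$$
satisfied by $F = F_{a_i, b_j}(t)$.

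Once $M(t)$ is in hand, verifying $\widetilde{\eta}_{i,j}\in\operatorname{Ker}\nabla_{i,j}$ reduces to a bookkeeping exercise. Writing
$$
\widetilde{\eta}_{i,j} = -t(1-t)^{a_i+b_j}\bigl(F'\omega_{i,j}+b_j F\,\eta_{i,j}\bigr)
$$
and applying Leibniz together with $M(t)$, the $\omega_{i,j}$- and $\eta_{i,j}$-components of $\nabla_{i,j}\widetilde{\eta}_{i,j}$ each become expressions that vanish upon substituting the Gauss ODE for $F$. The choice of the factor $-t(1-t)^{a_i+b_j}$ is precisely engineered so that its logarithmic derivative $(1-(a_i+b_j+1)t)/(t(1-t))$ matches the middle coefficient of the hypergeometric equation, yielding the cancellation. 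Because $H_{i,j}$ has rank $2$ over $K((t))$ and the hypergeometric ODE at $t=0$ (with exponents $0,0$) admits only one formal power-series solution — the other involves $\log t$ — the kernel of $\nabla_{i,j}$ on $H_{i,j}$ is exactly the one-dimensional $K$-line $K\widetilde{\eta}_{i,j}$.

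For the second assertion, the quotient $H_{i,j}/K((t))\widetilde{\eta}_{i,j}$ is rank one over $K((t))$, and the kernel of $\overline{\nabla}_{i,j}$ is controlled by the single condition $\nabla_{i,j}\widetilde{\omega}_{i,j}\in K((t))\widetilde{\eta}_{i,j}$. Expanding
$$
\nabla_{i,j}\widetilde{\omega}_{i,j} = -\frac{F'}{F^2}\omega_{i,j}\,dt + \frac{1}{F}\nabla_{i,j}\omega_{i,j}
$$
and using the first row of $M(t)$, one checks that the resulting $K((t))$-combination of $\omega_{i,j}$ and $\eta_{i,j}$ is indeed a scalar multiple of $F'\omega_{i,j}+b_j F\,\eta_{i,j}$; this is the same hypergeometric identity used in the previous paragraph, rewritten. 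Since the class of $\widetilde{\omega}_{i,j}=F^{-1}\omega_{i,j}$ is nonzero in the quotient, we conclude $\operatorname{Ker}\overline{\nabla}_{i,j}=K\widetilde{\omega}_{i,j}$. The main obstacle is the explicit determination of $M(t)$: carrying out the reductions modulo relative exact forms using $(1-x^N)(1-y^N)=t$ is technical, but this is exactly the content of \cite[Lemma 2.2]{A2} and \cite[\S 4]{A}, and the present lemma follows by appealing to those computations and performing the two Leibniz verifications above.
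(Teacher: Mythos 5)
Your proposal is correct and follows essentially the same route as the paper, whose proof is simply a citation of \cite[Proposition 4.3, Corollary 4.4]{A}: those results are exactly the explicit Gauss--Manin computation in the basis $(\omega_{i,j},\eta_{i,j})$ that you outline, with the horizontality of $\widetilde{\eta}_{i,j}$ and of $\widetilde{\omega}_{i,j}$ modulo $K((t))\widetilde{\eta}_{i,j}$ reducing to the hypergeometric ODE for $F_{a_i,b_j}$. Your added observation that the kernels are no larger --- because the second local solution at $t=0$ (exponents $0,0$) involves $\log t$ and hence does not live in $K((t))$ --- is a correct and worthwhile supplement to the bare citation.
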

\begin{proof}
See \cite[Proposition 4.3, Corollary 4.4]{A}.
\end{proof}

Let $(i,j)=(q/r,q^\prime/ r^\prime)\in \mathbb{Q}^2$ such that $\gcd(r,N)=\gcd(r^\prime, N)=1$ and $q, q^\prime$ are not divided by $N.$ Then we define
\begin{equation}
 H^1_{\rm dR}(Y_K/S_K)(i,j)= H^1_{\rm dR}(Y_K/S_K)(i_0,j_0), \quad \omega_{i,j}=\omega_{i_0,j_0}, \cdots, \widetilde{\eta}_{i,j}=\widetilde{\eta}_{i_0,j_0}  
\end{equation} where $i_0,j_0$ are the unique integers such that $i_0\equiv i \mod N$, $j_0\equiv j \mod N$ and $1\leq i_0, j_0<N.$

\subsubsection{Hypergeometric Curves of Gauss Type}\label{Hypergeometric Curves of Gauss Type}
Let $A, B$ be integers such that $1\leq A , B < N$ and gcd$(A,N)$=gcd$(B,N)$=1. \medskip

Let $f:Y\rightarrow \mathbb{P}^1$ be a fibration over $K$ $(K={\rm Frac}W(\overline{\mathbb{F}}_p))$ whose general fiber $Y_{t}=f^{-1}(t)$ is the projective nonsingular curve associated to the affine curve
$$
y^N=x^A(1-x)^B(1-(1-t)x)^{N-B}
$$
We call $X$ a hypergeometric curve of Gauss type. This is a fibration of curves of genus $N-1,$ smooth outside $t=0,1,\infty.$ Put $S_K:={\rm Spec}K[t,(t-t^2)^{-1}]$ and $X_0:=f^{-1}(S_K).$\medskip

Let $[\zeta]:X_0\rightarrow X_0$ be the automorphism given by
$$
[\zeta](x,y,t)=(x,\zeta^{-1}y,t)
$$ for any $N$-th root of unity $\zeta\in \mu_{N}=\mu_{N}(K).$ We denote
$$
H_{{\rm dR}}^{1}(X_0/S_K)(n):=\{x\in H_{{\rm dR}}^{1}(X_0/S_K) \ | \ [\zeta]x=\zeta^n x, \forall \zeta \in \mu_N\}.
$$
Then one has the eigen decomposition 
$$
H_{{\rm dR}}^{1}(X_0/S_K)=\bigoplus_{n=1}^{N-1}H_{{\rm dR}}^{1}(X_0/S_K)(n)
$$ of $\mathscr{O}(S_K)$-module and each eigen space is free of rank $2$. A basis of $H_{{\rm dR}}^{1}(X_0/S_K)(n)$ (\cite[Lemma 2.5]{A2}) is given by
$$
\omega_n:=x^{A_n}(1-x)^{B_n}(1-(1-t)x)^{n-1-B_n}\frac{dx}{y^n}, \quad \eta_n:=\frac{x}{1-(1-t)x}\omega_n
$$
where
$$
A_n:=\lfloor\frac{nA}{N} \rfloor, \quad B_n:=\lfloor\frac{nB}{N} \rfloor.
$$ \medskip

Let 
$$
a_n:=\bigg\{\frac{-nA}{N}\bigg\}, b_n:=\bigg\{\frac{-nB}{N}\bigg\}
$$ where $\{x\}:=x-\lfloor x \rfloor.$
Put
\begin{equation}
    \widetilde{\omega}_n:=\frac{1}{F_{a_n,b_n}(t)}\omega_n, \quad \widetilde{\eta}_n:=-t(1-t)^{a_n+b_n}(F^\prime_{a_n,b_n}(t)\omega_n+b_jF_{a_n,b_n}(t)\eta_n).
\end{equation} which form a $K((t))$-basis of $K((t))\otimes H^1_{\rm dR}(X_0/S_K).$\medskip

Let $Y/S$ be the hypergeometric curves in \S 4.1.1. Then we have the following Lemma.

\begin{lem}{$(\cite[\S 4.6]{A})$}\label{quotient_map} Let $g_{A,B}$ be the automorphism of $Y_K/S_K$ given by $g_{A,B}: (u,v)\mapsto (\zeta_N^Bu,\zeta_N^{-A}v)$ for a fixed primitive $N$-root of unity in $\mu_N(K)$. Let $G=\langle g_{A,B}\rangle\subset {\rm Aut}(Y_K/S_K)$ be the cyclic group of order $N.$ Then the quotient $Y_G:=Y_K/G$ by $G$ is isomorphic to the curve $X: y^N=x^A(1-x)^B(1-(1-t)x)^{N-B}.$ The quotient map $\rho: Y_K \rightarrow X$ is given by  
$$
(u,v)\longmapsto (x,y)=(u^{-N}, u^{-A}(1-u^{-N})v^{N-B}) 
$$ which is a $S$-morphism (i.e. $t\mapsto t$).
\end{lem}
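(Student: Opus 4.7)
The plan is to verify directly that the map $\rho(u,v) = (u^{-N},\, u^{-A}(1-u^{-N})v^{N-B})$ is a $G$-invariant $S_K$-morphism $Y_K \to X$ whose generic fibres have exactly $N$ points, permuted transitively by $G$. Since $|G| = N$ (a consequence of $\gcd(A,N) = \gcd(B,N) = 1$, which ensures that the smallest $k$ with $\zeta_N^{kB} = \zeta_N^{-kA} = 1$ is $N$), this forces the induced morphism $Y_K/G \to X$ to be a birational morphism between smooth projective curves over $S_K$, hence an isomorphism.

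The first step is $G$-invariance. Under $g_{A,B}\colon (u,v) \mapsto (\zeta_N^B u,\, \zeta_N^{-A} v)$, the coordinate $x = u^{-N}$ is obviously fixed, and $y = u^{-A}(1-u^{-N})v^{N-B}$ picks up a factor of $\zeta_N^{-AB}\cdot \zeta_N^{-A(N-B)} = \zeta_N^{-AN} = 1$, the middle factor $1-u^{-N}$ being $G$-invariant because it depends only on $u^N$. Thus $\rho$ factors through $Y_K/G$, and the assignment $t \mapsto t$ makes it an $S_K$-morphism.

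The second step is that the image actually lies in $X$. Using the identities $1-x = -(1-u^N)/u^N$ and $1-(1-t)x = (u^N-1+t)/u^N$, together with $v^N = (1-u^N-t)/(1-u^N)$ obtained from the defining relation $(1-u^N)(1-v^N) = t$, a direct substitution shows that both $y^N$ and $x^A(1-x)^B(1-(1-t)x)^{N-B}$ reduce to
\[
(-1)^N u^{-AN-N^2}(1-u^N)^B(1-u^N-t)^{N-B}
\]
after collecting the powers of $u$, $(1-u^N)$, and $(1-u^N-t)$.

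The third step is fibre counting. For generic $(x,y) \in X$, the equation $u^{-N} = x$ has $N$ solutions for $u$, which form a single $\langle g_{A,B}\rangle$-orbit since $\gcd(B,N) = 1$. For each such $u$, the element $v$ is uniquely determined by the two conditions $v^N = (1-u^N-t)/(1-u^N)$ and $v^{N-B} = u^A(1-u^{-N})^{-1} y$; the coprimality $\gcd(N-B,N) = \gcd(B,N) = 1$ allows one to recover $v$ itself from these two powers via a Bézout relation. Hence $\rho$ has generic degree $N$ with $G$ acting simply transitively on fibres, so the induced map $Y_K/G \to X$ is birational, hence an isomorphism of smooth projective curves over $S_K$. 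The main obstacle is really just the bookkeeping in the Gauss-equation verification of step two; the conceptual content is that the coprimality hypotheses on $A,B,N$ are precisely what yield both the generic $N$-to-$1$ property of $\rho$ and the transitivity of the $G$-action on its fibres.
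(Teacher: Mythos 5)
Your proposal is correct, but note that the paper itself supplies no proof of this lemma at all: it is stated as a quoted result from \cite[\S 4.6]{A}, so there is no internal argument to compare against. Your direct verification is sound and fills that gap: the $G$-invariance computation ($y$ picks up $\zeta_N^{-AN}=1$), the substitution check that both $y^N$ and $x^A(1-x)^B(1-(1-t)x)^{N-B}$ equal $(-1)^N u^{-AN-N^2}(1-u^N)^B(1-u^N-t)^{N-B}$ using $v^N=(1-u^N-t)/(1-u^N)$, and the fibre count via $\gcd(B,N)=1$ and the B\'ezout recovery of $v$ from $v^N$ and $v^{N-B}$ all check out. The only cosmetic point worth tightening is the final step: $Y_K/G$ is a priori only normal rather than smooth, so you should conclude via the fact that a finite birational morphism of normal (projective, relative) curves is an isomorphism, rather than asserting a birational morphism ``between smooth projective curves''; since $X$ is by definition the projective nonsingular model and the quotient is normal, this is immediate and does not affect the validity of the argument.
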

Put
$$
\widehat{\omega}_n=\sum_{g \in G}g^*\omega_{nA,nB} \quad \widehat{\eta}_n=\sum_{g \in G}g^*\eta_{nA,nB}.
$$

\begin{lem}\label{omega}
Using the notation above, we have
\begin{equation}
    \omega_{nA,nB}=\rho^*(\omega_n), \quad  \eta_{nA,nB}=\rho^*(\eta_n)
\end{equation}
\begin{equation}
    \widetilde{\omega}_{nA,nB}=\rho^*(\widetilde{\omega}_n), \quad \widetilde{\eta}_{nA,nB}=\rho^*(\widetilde{\eta}_n)
\end{equation}
\begin{equation}
   \widehat{\omega}_n=N\rho^*(\omega_n), \quad  \widehat{\eta}_n=N\rho^*(\eta_n).
\end{equation}
\end{lem}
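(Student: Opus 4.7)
My plan is to verify the three pairs of identities by direct computation using the explicit formula $\rho(u,v) = (u^{-N}, u^{-A}(1-u^{-N})v^{N-B})$ from Lemma \ref{quotient_map}. The essential algebraic identity is that, since $t = (1-u^N)(1-v^N)$ on $Y$, one has
\begin{equation*}
\rho^*\bigl(1-(1-t)x\bigr) \;=\; u^{-N}(u^N - 1 + t) \;=\; -u^{-N}v^N(1-u^N).
\end{equation*}
Substituting $\rho^*(x) = u^{-N}$, $\rho^*(y) = u^{-A}(1-u^{-N})v^{N-B}$, $\rho^*(dx) = -Nu^{-N-1}du$, and this identity into the definition of $\omega_n$, I expect the powers of $(1-u^N)$ to telescope to a single factor in the denominator, the accumulated signs to collect into $+1$, and the remaining exponents of $u$ and $v$ to simplify to $nA - NA_n - 1$ and $nB - NB_n - N$ respectively. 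By the extension convention for $\omega_{i,j}$ together with $nA = NA_n + (nA \bmod N)$ and $nB = NB_n + (nB \bmod N)$, this expression is exactly $\omega_{nA,nB}$, proving $\rho^*(\omega_n) = \omega_{nA,nB}$.

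For $\eta_n = \frac{x}{1-(1-t)x}\omega_n$, the same displayed identity immediately yields $\rho^*(\eta_n) = -\frac{1}{v^N(1-u^N)}\omega_{nA,nB}$, which matches $\eta_{nA,nB} = \frac{\omega_{nA,nB}}{u^N-1+t} = -\frac{\omega_{nA,nB}}{v^N(1-u^N)}$. The tilde identities are then formal: the hypergeometric parameters on both sides coincide, since $a_n = \{-nA/N\} = 1 - (nA \bmod N)/N = a_{nA}$ and similarly $b_n = b_{nB}$, so $F_{a_n,b_n}(t) = F_{a_{nA},b_{nB}}(t)$ and the same for $F'$. The definitions of $\widetilde{\omega}_n$ and $\widetilde{\eta}_n$ on $X$ then pull back under $\rho^*$ term by term to the corresponding $\widetilde{\omega}_{nA,nB}$ and $\widetilde{\eta}_{nA,nB}$ on $Y$, using the already-proved identities for $\omega_n$ and $\eta_n$.

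For the hat identities, I would show that $\omega_{nA,nB}$ and $\eta_{nA,nB}$ are already $G$-invariant on $Y_K$, whence $\widehat{\omega}_n = \sum_{g\in G} g^*\omega_{nA,nB} = |G|\,\omega_{nA,nB} = N\rho^*(\omega_n)$, and analogously for $\widehat{\eta}_n$. Invariance of $\omega_{nA,nB} = N\frac{u^{nA-1}v^{nB-N}}{1-u^N}du$ under $g_{A,B}^k:(u,v)\mapsto(\zeta_N^{kB}u,\zeta_N^{-kA}v)$ reduces to the single congruence $(nA-1)B - A(nB-N) + B = AN \equiv 0 \pmod{N}$. Invariance of $\eta_{nA,nB}$ is then automatic since $\eta_{nA,nB} = \omega_{nA,nB}/(u^N-1+t)$ and both $u^N$ and $t$ are $G$-invariant.

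The main obstacle I anticipate is the exponent and sign bookkeeping in the first computation. Converting $(1-u^{-N})$-factors to $(1-u^N)$-factors introduces sign contributions $(-1)^{B_n}$, $(-1)^{n-1-B_n}$, and $(-1)^n$ that must cancel cleanly, and the simplification of the $u$-exponent genuinely uses the decomposition $nA = NA_n + (nA \bmod N)$; a miscount in any of these steps would break the match with $\omega_{nA,nB}$. Once this central identity is nailed down, the remaining statements reduce to short algebraic manipulations and one character computation.
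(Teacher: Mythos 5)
The paper offers no proof of this lemma at all---it is stated bare, immediately after Lemma \ref{quotient_map} (which is itself cited to \cite[\S 4.6]{A})---so there is nothing to compare your argument against except the computation itself. Your direct verification is correct and fills the gap. I checked the central pullback: with $\rho^*(x)=u^{-N}$, $\rho^*(1-x)=-u^{-N}(1-u^N)$, $\rho^*(y)=-u^{-A-N}(1-u^N)v^{N-B}$, $\rho^*(dx)=-Nu^{-N-1}du$ and your identity $\rho^*(1-(1-t)x)=-u^{-N}v^N(1-u^N)$, the signs from the four factors give $(-1)^{B_n+(n-1-B_n)+n+1}=1$, the $(1-u^N)$-powers collapse to $(1-u^N)^{-1}$, and the $u$- and $v$-exponents come out to $nA-NA_n-1$ and $nB-NB_n-N$, which under the reduction convention (4.6) and $nA=NA_n+(nA\bmod N)$ is exactly $\omega_{nA,nB}$. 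The identification $a_n=\{-nA/N\}=1-(nA\bmod N)/N=a_{i_0}$ makes the tilde identities formal as you say (note the paper's $b_j$ in its display (4.7) is a typo for $b_n$, which your reading silently corrects), and the character computation $\zeta^{(nA-1)B-A(nB-N)+B}=\zeta^{AN}=1$ correctly establishes $G$-invariance of $\omega_{nA,nB}$, with invariance of $\eta_{nA,nB}$ following since $u^N$ and $t$ are $G$-invariant; hence $\widehat\omega_n=|G|\,\omega_{nA,nB}=N\rho^*(\omega_n)$. The only caveat is that your write-up is still a plan rather than a worked computation; since the entire content of the lemma is this bookkeeping, you should display the exponent and sign arithmetic explicitly in the final version rather than asserting that it telescopes.
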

Then we see that the pull-back $\rho^*$ satisfies
\begin{equation}\label{pullback}
   \rho^*(H^1_{\rm dR}(X_0/S_K)(n))=H^1_{\rm dR}(Y_K/S_K)(nA,nB), \quad 0<n<N
\end{equation}
and the push-forward $\rho_*$ satisfies
\begin{equation}\label{pushforward}
H^1_{\rm dR}(Y_K/S_K)(i,j) =\begin{cases}
H^1_{\rm dR}(X_0/S_K)(n) \quad (i,j)\equiv (nA,nB) \mod N \\
0 \quad \text{otherwise}
\end{cases} 
\end{equation}
for $0<i,j<N.$
One has 
\begin{equation}
   \rho_*(\omega_{nA,nB}), \quad \rho_*(\eta_{nA,nB})
\end{equation} a basis of $H^1_{\rm dR}(X_0/S_K)(n)$ and 
$$
\rho^*\rho_*(\omega_{nA,nB})=N\omega_{nA,nB}.
$$ from (\ref{pullback}), (\ref{pushforward}) and $\rho_*\rho^*=N.$
\subsection{Review of paper \cite{A}}
In this part, we will give a brief review of what we need in the paper \cite{A}.
First, let us recall the definition of $p$-adic hypergeometric functions of logarithmic type.
\begin{defi}[\textbf{$p$-adic hypergeometric functions of logarithmic type}]\label{p-adic hypergeometric functions of logarithmic type}
Let $s\geq 1$ be a positive integer. Let $(a_1,\cdots, a_s)\in \mathbb{Z}_p$ and $(a_1^\prime,\cdots,a_s^\prime)$ where $a_{i}^\prime$ denotes the Dwork prime. Let $\sigma : W[[t]]\rightarrow W[[t]]$ be the $p$-th Frobenius endomorphism given by $\sigma(t)=ct^p$ with $c\in 1+pW.$ Then we define the $p$-adic hypergeometric functions of logarithmic type to be
$$
\mathscr{F}^{(\sigma)}_{a_1,\cdots,a_s}(t):=\frac{1}{F_{a_1,\cdots,a_s}(t)}\bigg[\psi_p(a_1)+\cdots+\psi_p(a_s)+s\gamma_p-p^{-1}\log(c)+\int_{0}^{t}(F_{a_1,\cdots,a_s}(t)-F_{a_1^\prime,\cdots,a_s^\prime}(t^{\sigma}))\frac{dt}{t}\bigg]
$$ where $\psi_p(z)$ is the $p$-adic digamma function( \cite[\S 2.2]{A}), 
$\gamma_p$ is the $p$-adic Euler constant (\cite[\S 2.2]{A})
and $\log(z)$ is the Iwasawa logarithmic function.
\end{defi}

\begin{defi}[cf. {\cite[3.1.1]{LP}}]
Let $W=W(\overline{k})$ where $\overline{k}$ is an algebraic closed field with ${\rm char}\overline{k}>0.$
Define
$$
\begin{aligned}
W\langle t_1,\cdots, t_n\rangle&:=\varprojlim_{n}W/p^n[t_1,\cdots,t_n]\\
&=\bigg\{\sum_{I}a_It^I\in W[[t_1,\cdots,t_n]] \ \Big| \ |a_I|_p\rightarrow 0 \ as \ |I|\rightarrow \infty \bigg\}
\end{aligned}
$$
$$
W[t_1,\cdots,t_n]^{\dagger}:=\bigg\{\sum_{I}a_It^I\in W[[t_1,\cdots,t_n]] \ \Big| \ \exists r>1 \ such \ that \ |a_I|_pr^{|I|}\rightarrow 0 \ as \ |I|\rightarrow \infty \bigg\}.
$$
Furthermore, if 
$$
A=W[t_1,\cdots,t_n]/(f_1,\cdots,f_r),
$$
we define the weak completion of $A$ to be
\[
A^{\dagger}:=W[t_1,\cdots,t_n]^{\dagger}/(f_1,\cdots,f_r).
\]
\end{defi} 

\begin{defi}
Let $F:W\rightarrow W$ be the $p$-th Frobenius. Then $\sigma :A^{\dagger}\rightarrow A^{\dagger}$ is called a $p$-th Frobenius if 
\begin{itemize}
    \item $\sigma$ is $F$-linear $(\sigma(\alpha x)=F(\alpha)\sigma(x), \alpha\in W, x\in A^{\dagger})$
    \item $\sigma \mod p$ on $A^{\dagger}/pA^{\dagger}\simeq A/pA$ is given by $x\rightarrow x^p.$
\end{itemize}
\end{defi}
Let $\sigma$ be a $p$-th Frobenius on $W[t,(t-t^2)^{-1}]^{\dagger}$ which extends on $K[t,(t-t^2)^{-1}]^{\dagger}:=K\otimes W[t,(t-t^2)^{-1}]^{\dagger}.$ Write $X_{\overline{\mathbb{F}}_p}:=X_W\times_{W} \overline{\mathbb{F}}_p$ and $S_{\overline{\mathbb{F}}_p}:=S_W\times_{W} \overline{\mathbb{F}}_p.$ Then the \textit{rigid cohomology} groups
$$
H_{{\rm rig}}^{\bullet}(X_{\overline{\mathbb{F}}_p}/S_{\overline{\mathbb{F}}_p})
$$
are defined. We refer the book \cite{L} for the theory of rigid cohomology.\medskip

The required properties is the following.
\begin{itemize}
    \item $H_{{\rm rig}}^{\bullet}(X_{\overline{\mathbb{F}}_p}/S_{\overline{\mathbb{F}}_p})$ is a finitely generated $\mathscr{O}(S_K)^{\dagger}$-module.
    \item (Frobenius) The $p$-th Frobenius $\Phi$ on $H_{{\rm rig}}^{\bullet}(X_{\overline{\mathbb{F}}_p}/S_{\overline{\mathbb{F}}_p})$(depending on $\sigma$) is defined. This is a $\sigma$-linear endomorphism:
    $$
    \Phi(f(t)x)=\sigma(f(t))\Phi (x), {\rm for \ } x\in H_{{\rm rig}}^{\bullet}(X_{\overline{\mathbb{F}}_p}/S_{\overline{\mathbb{F}}_p}), f(t)\in \mathscr{O}(S_K)^{\dagger}.
    $$
    \item (Comparison) There is the comparison isomorphism with algebraic de Rham cohomology,
    $$
    c: H_{{\rm rig}}^{\bullet}(X_{\overline{\mathbb{F}}_p}/S_{\overline{\mathbb{F}}_p})\cong H_{{\rm dR}}^{\bullet}(X_0/S_K)\otimes_{\mathscr{O}(S_K)}\mathscr{O}(S_K)^{\dagger}.
    $$
\end{itemize}

\begin{thm}\label{unique_lifting}
Let $\sigma$ be a $p$-Frobenius on $\mathscr{O}(S_K)^{\dagger}$ such that $\sigma(t)=ct^p$ with $c\in 1+pW.$ Then there exists an exact sequence
$$
0\longrightarrow \mathscr{O}(S_K)^{\dagger}\otimes_{\mathscr{O}(S_K)}H_{\rm dR}^1(X_0/S_K)\longrightarrow \mathscr{O}(S_K)^{\dagger}\otimes_{\mathscr{O}(S_K)}M_{\xi}(X_0/S_K)\longrightarrow \mathscr{O}(S_K)^{\dagger} \longrightarrow 0
$$
endowed with 
\begin{itemize}
\item Frobenius $\Phi$-action which is $\sigma$-linear.
\item ${\rm Fil}^i$ $\subseteq$ $M_{\xi}(X_0/S_K)$ $({\rm Hodge \ filtration})$ with
$$
\mathscr{O}(S_K)^{\dagger}\otimes_{\mathscr{O}(S_K)}{\rm Fil}^0M_{\xi}(X_0/S_K) \xrightarrow{\sim} \mathscr{O}(S_K)^{\dagger}
$$
In particular, there exists a unique lifting  $e_{\xi}\in \mathscr{O}(S_K)^{\dagger}\otimes_{\mathscr{O}(S_K)}{\rm Fil}^0M_{\xi}(X_0/S_K)$ of $1\in \mathscr{O}(S_K)^{\dagger}.$
\end{itemize}
\end{thm}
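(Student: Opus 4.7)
The plan is to realize $M_\xi$ as the syntomic extension attached to a motivic cohomology class $\xi$ on the hypergeometric fibration $X_0/S_K$, and then read off the unique lifting $e_\xi$ from the splitting of the Hodge filtration at the top piece. This follows the framework already set up in \cite[\S 3]{A}, and I would largely import that construction.

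First, I would identify the class $\xi$ as an element of motivic cohomology (concretely an element of $K_2$ of the function field of $X_0$ built from the defining equation of the curve $y^N = x^A(1-x)^A(1-(1-t)x)^{N-A}$). The syntomic regulator then produces a one-extension of filtered F-isocrystals over $S_K^{\dagger}:=\mathrm{Spec}\,\mathscr{O}(S_K)^{\dagger}$ of the form
\[
0 \to H^1_{\rm rig}(X_{\overline{\mathbb{F}}_p}/S_{\overline{\mathbb{F}}_p}) \to M_\xi \to \mathscr{O}(S_K)^{\dagger} \to 0,
\]
and applying the comparison isomorphism $c$ gives the sequence stated in the theorem. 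The Frobenius $\Phi$ on $M_\xi$ is inherited from the rigid-cohomology Frobenius on the left term together with the endomorphism $\sigma$ on the quotient, glued via the regulator cocycle.

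Second, I would equip $M_\xi$ with its Hodge filtration: the de Rham realization of $\xi$ lies in $\mathrm{Fil}^2$ of the relevant $H^2_{\rm dR}$, which forces a compatible Hodge structure on $M_\xi$ in which $\mathrm{Fil}^0 M_\xi$ maps isomorphically onto $\mathscr{O}(S_K)^{\dagger}$. The key point is that the kernel $H^1_{\rm dR}(X_0/S_K)$ (viewed with its natural filtration coming from the extension) has trivial $\mathrm{Fil}^0$ in the weight-shifted convention used here, so the surjection $\mathrm{Fil}^0 M_\xi \twoheadrightarrow \mathscr{O}(S_K)^{\dagger}$ has trivial kernel, hence is an isomorphism. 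The unique lifting $e_\xi$ is then defined as the preimage of $1$ under this isomorphism; uniqueness is built into the fact that the map is a bijection.

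The main obstacle I expect is constructing the motivic class $\xi$ and proving that its syntomic regulator lands in $\mathrm{Ext}^1$ rather than merely mapping somewhere in a larger complex — i.e.\ checking that the extension $M_\xi$ is canonically defined over $\mathscr{O}(S_K)^{\dagger}$ rather than just over a larger Tate algebra. This is handled in \cite{A} by exploiting the semistable reduction of $\overline{f}:\overline{Y}\to\mathbb{P}^1$ at $t=0$ (the NCD fiber $D$), which provides the Hodge-theoretic control needed to extend the syntomic regulator to the overconvergent ring $\mathscr{O}(S_K)^{\dagger}$. Once that is in place, the remaining verifications — $\sigma$-linearity of $\Phi$, compatibility of $\mathrm{Fil}^\bullet$ with the exact sequence, and the isomorphism $\mathrm{Fil}^0 M_\xi \xrightarrow{\sim}\mathscr{O}(S_K)^{\dagger}$ — are formal consequences of the filtered F-isocrystal formalism.
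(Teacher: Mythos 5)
Your formal skeleton matches the paper's: a $K_2$ class, a one-extension of filtered $F$-isocrystals in the sense of \cite{AM2}, and the observation that ${\rm Fil}^0$ of the kernel vanishes (because of the Tate twist, ${\rm Fil}^0 H = {\rm Fil}^2 H^1_{\rm dR}(X_0/S_K) = 0$ for a curve), so that ${\rm Fil}^0 M_\xi \to \mathscr{O}(S_K)^\dagger$ is an isomorphism and $e_\xi$ is the preimage of $1$. That last part of your argument is exactly the paper's.

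However, there is a genuine gap at the step you wave at: ``identify the class $\xi$ as an element of $K_2$ of the function field of $X_0$ built from the defining equation of the curve.'' The paper does not construct $\xi$ on $X_0$ at all. It takes the explicit Milnor symbol $\xi(\nu_1,\nu_2)=\{(u-1)/(u-\nu_2),\,(v-1)/(v-\nu_2)\}\in K_2^M(\mathcal{O}(U))$ on the \emph{covering} hypergeometric curve $Y:\ (1-u^N)(1-v^N)=t$, for which the one-extension $0\to H^1(Y/S)(2)\to M_{\xi(\nu_1,\nu_2)}(Y/S)\to \mathcal{O}_S\to 0$ in Fil-$F$-MIC$(S,\sigma)$ is already supplied by \cite[\S 2.6]{AM2}; it then sets $\xi=\sum_{i}(g_{A,B}^i)^*\xi(\nu_1,\nu_2)$, which is $G$-invariant, and takes $G$-fixed parts of the extension, using $Y/G\simeq X$ (Lemma \ref{quotient_map}) to identify $H^1(Y/S)^G$ with $H^1_{\rm dR}(X_0/S_K)$. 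This descent through the quotient is the actual content of the proof, and it is also what makes the later computations (e.g.\ $\nabla(e_\xi)=\sum_i (g_{A,B}^*)^i\nabla(e_{\xi(\nu_1,\nu_2)})$ in Lemma \ref{dlog(xi)}) possible; a symbol cooked up directly from the equation of $X$ would not obviously come with either the extension class or the explicit $d\log$ formula. Relatedly, the obstacle you flag (showing the regulator lands in ${\rm Ext}^1$ over $\mathscr{O}(S_K)^\dagger$, via semistable reduction at $t=0$) is not where the work lies here: that is absorbed into the citation of \cite{AM2}, and the semistable model enters elsewhere in the paper, not in this proof.
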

\begin{proof}
Recall $U={\rm Spec}R[u,v]/((1-u^N)(1-v^N)-t)\subset Y:=\overline{U}$
where $Y$ is the closure in $\mathbb{P}^1_R\times \mathbb{P}^1_R.$
For $(\nu_1,\nu_2)\in \mu_N (W)\times \mu_N(W)$, put 
$$
\xi(\nu_1,\nu_2)=\bigg\{\frac{u-1}{u-\nu_2}, \frac{v-1}{v-\nu_2}\bigg\}\in K_2^M (\mathcal{O}(U)).
$$

Then according to \cite[\S 2.6]{AM2}, we have the $1$-extension
\begin{equation}\label{1-ext_of_Y}
 0\longrightarrow H^1(Y/S)(2) \longrightarrow M_{\xi(\nu_1,\nu_2)}(Y/S)\longrightarrow \mathcal{O}_S\longrightarrow 0 
\end{equation} in the category of Fil-$F$-MIC$(S,\sigma).$\medskip

Let $\xi:=\sum_{i=0}^{N-1}(g_{A,B}^i)^*\xi(\nu_1,\nu_2)\in K_2^M (\mathcal{O}(U))$ which is fixed under the action of $G,$ so that $G$ acts on $M_\xi(Y/S)$. Taking the fixed part of (\ref{1-ext_of_Y}) by $\langle g_{A,B} \rangle,$ we have a $1$-extension
\[
\begin{CD}
0 @>>>  H^1(Y/S)^G(2) @>>>  M_\xi(Y/S)^G @>>> {\mathcal{O}}_S @>>> 0\\
  @. @| @| @. @. \\
  @. H @. M_\xi 
\end{CD}
\] 
By Lemma \ref{quotient_map}, it follows
\begin{itemize}
    \item $H_{\rm dR}\simeq H_{\rm dR}^1(X_0/S_K)$
    \item $H_{\rm rig}\simeq \mathcal{O}(S_K)^\dagger \otimes H_{\rm dR}^1(X_0/S_K)$ 
    \item ${\rm Fil}^i H_{\rm dR}={\rm Fil}^{i+2} H_{\rm dR}^1(X_0/S_K)$ where $\rm Fil^\bullet$ in the right denotes the Hodge filtration. In particular, ${\rm Fil}^0 H_{\rm dR}=0.$
\end{itemize}
Therefore there is the unique element $e_\xi\in {\rm Fil}^0M_\xi$ which is a lifting of $1\in \mathcal{O}(S_K).$
\end{proof}

Let $1\leq n \leq N-1$ be an integer and $A, B$ be integers such that $1\leq A , B < N$ and gcd$(A,N)$=gcd$(B,N)$=1. \medskip

Put
$$
a_n:=\bigg\{\frac{-nA}{N}\bigg\}, \quad b_n:=\bigg\{\frac{-nB}{N}\bigg\}
$$
where $\{x\}:=x-\lfloor x\rfloor$ denotes the fractional part. Let
$$
F_n(t):=\sum\limits_{i=0}^{\infty}\frac{(a_n)_i}{i!}\frac{(b_n)_i}{i!}t^i\in\mathbb{Z}_p[[t]]
$$
be the hypergeometric power series. Put
$$
e^{\rm unit}_{i,j}:=(1-t)^{-a_i-b_j}F_{a_i,b_j}(t)^{-1}\widetilde{\eta}_{i,j}.
$$ 
Using the group $G$ in Lemma \ref{quotient_map}, we define
$$
\widehat{\omega}_n=\sum_{g\in G}g^*\omega_{nA,nB}, \quad e_n^{\rm unit}:=\sum_{g\in G}g^* e_{nA,nB}^{\rm unit}.
$$
Let $e_{\xi}$ be the unique lifting of $1$ in the above theorem. 
\begin{lem}\label{dlog(xi)}
$$
\nabla (e_\xi)=-\sum_{n=1}^{N-1}\frac{(1-\nu_1^{-nA})(1-\nu_2^{-nB})}{N^2}\frac{dt}{t}\widehat{\omega}_n.
$$
\end{lem}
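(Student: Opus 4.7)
The plan is to compute $\nabla(e_\xi)$ via an eigenspace decomposition under the $\mu_N\times\mu_N$-action on $H^1_{\rm dR}(Y_K/S_K)$ and then identify each eigencomponent using Lemma \ref{omega}. Since $e_\xi$ lifts $1\in\mathcal{O}(S_K)^{\dagger}$, the image $\nabla(e_\xi)$ lies in $\Omega^1_{S_K}\otimes H^1_{\rm dR}(Y_K/S_K)^G(2)$. Writing $\xi=\sum_{k=0}^{N-1}(g_{A,B}^k)^*\xi(\nu_1,\nu_2)$ and using additivity of Hodge liftings under Baer sum together with the compatibility of $\nabla$ with pullbacks, I would reduce to the elementary symbol and average, i.e.\ $\nabla(e_\xi)=\sum_{k=0}^{N-1}(g_{A,B}^k)^*\nabla(e_{\xi(\nu_1,\nu_2)})$.

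The de Rham description of the extension from \cite[\S 2.6]{AM2} identifies $\nabla(e_{\xi(\nu_1,\nu_2)})$ (up to the overall sign from the convention on the extension class) with the $f^*\Omega^1_S\wedge\Omega^1_{Y/S}$-component of the $2$-form
$$\alpha:=d\log\frac{u-1}{u-\nu_1}\wedge d\log\frac{v-1}{v-\nu_2}\in\Omega^2(U).$$
I would then decompose $\alpha$ via the character projection. The key identity is
$$\frac{1}{N}\sum_{\zeta\in\mu_N}\zeta^{-i}\,d\log(\zeta u-a)=\frac{u^{i-1}a^{N-i}\,du}{u^N-a^N}\qquad(1\le i\le N-1),$$
which follows from the partial-fraction expansion of $x^{N-i}/(x^N-u^N)$. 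Applied with $a\in\{1,\nu_1,\nu_2\}$ (all $N$-th roots of unity), this gives
$$\mathrm{proj}_{(i,j)}\alpha=(1-\nu_1^{-i})(1-\nu_2^{-j})\cdot\frac{u^{i-1}v^{j-1}\,du\wedge dv}{(u^N-1)(v^N-1)}.$$

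Next I would use the defining relation $(1-u^N)(1-v^N)=t$, which gives both $(u^N-1)(v^N-1)=t$ and $dt\wedge du=Nv^{N-1}(1-u^N)\,du\wedge dv$, to rewrite
$$\frac{u^{i-1}v^{j-1}\,du\wedge dv}{(u^N-1)(v^N-1)}=\frac{1}{N^2}\,\frac{dt}{t}\wedge\omega_{i,j},$$
with $\omega_{i,j}$ as in (2.2). Hence $\nabla(e_{\xi(\nu_1,\nu_2)})\big|_{(i,j)}=-\tfrac{(1-\nu_1^{-i})(1-\nu_2^{-j})}{N^2}\tfrac{dt}{t}\otimes\omega_{i,j}$. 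Finally, on the $(i,j)$-eigenspace, $g_{A,B}=[\zeta_N^B,\zeta_N^{-A}]$ acts by $\zeta_N^{Bi-Aj}$, so $\sum_{k=0}^{N-1}(g_{A,B}^k)^*$ equals $N$ precisely when $Bi\equiv Aj\bmod N$—equivalently when $(i,j)\equiv(nA,nB)\bmod N$ for a unique $n\in\{1,\ldots,N-1\}$—and vanishes otherwise. Since $\omega_{nA,nB}$ is $G$-invariant, $\widehat{\omega}_n=N\omega_{nA,nB}$ by Lemma \ref{omega}(3), and the factors $N$, $1/N^2$, $1/N$ combine to yield the claimed $1/N^2$.

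The main obstacle is the algebraic identity converting $\tfrac{u^{i-1}v^{j-1}\,du\wedge dv}{(u^N-1)(v^N-1)}$ into $\tfrac{1}{N^2}\tfrac{dt}{t}\wedge\omega_{i,j}$, which requires careful use of the fibration relation to split off the $dt/t$ factor cleanly. The remaining steps—the character-projection identity, Baer-sum additivity, and the passage from $\omega_{nA,nB}$ to $\widehat{\omega}_n$—are formal given the setup.
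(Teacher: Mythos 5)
Your proposal is correct and follows essentially the same route as the paper: reduce to the single symbol $\xi(\nu_1,\nu_2)$ by $G$-averaging and compatibility of $\nabla$ with pullback, identify $\nabla(e_{\xi(\nu_1,\nu_2)})$ with $-d\log\xi(\nu_1,\nu_2)$ decomposed into $(i,j)$-eigencomponents, and collapse the double sum to the single sum over $n$ via the character $\zeta_N^{Bi-Aj}$ of $g_{A,B}$. The only difference is that you re-derive the eigencomponent formula for $d\log\xi(\nu_1,\nu_2)$ by the partial-fraction/character-projection computation, whereas the paper simply cites \cite[\S 4.4 (4.25)]{A}; your version is a self-contained expansion of the same argument, and your identities check out.
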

\begin{proof}
Let $\xi:=\sum_{i=0}^{N-1}g_{A,B}^i\xi(\nu_1,\nu_2)$ with
\begin{equation}\label{K2symbol}
 \xi(\nu_1,\nu_2)=\bigg\{\frac{x-1}{x-\nu_1}, \frac{y-1}{y-\nu_2}\bigg\}. 
\end{equation}
Following from the compatibility of the connection with respect to the pull-back, we have
$$
\nabla(e_{\xi})=\sum_{i=0}^{N-1}(g_{A,B}^*)^i\nabla(e_{\xi(\nu_1,\nu_2)}).
$$ Then by \cite[\S 4.4 (4.25)]{A}, we have 
$$
\nabla(e_{\xi(\nu_1,\nu_2)})=-d\log(\xi(\nu_1,\nu_2))=-\sum_{i=1}^{N-1}\sum_{j=1}^{N-1}\frac{(1-\nu_1^{-i})(1-\nu_2^{-j})}{N^2}\frac{dt}{t}\omega_{i,j}.
$$
So we have
$$
\begin{aligned}
\nabla(e_{\xi})&=\sum_{i=0}^{N-1}(g_{A,B}^*)^i\nabla(e_{\xi(\nu_1,\nu_2)}) \\
&= -\sum_{i=1}^{N-1}\sum_{j=1}^{N-1}\frac{(1-\nu_1^{-i})(1-\nu_2^{-j})}{N^2}\frac{dt}{t} \sum_{g\in  G}g^*\omega_{i,j}\\
&= -\sum_{n=1}^{N-1}\frac{(1-\nu_1^{-nA})(1-\nu_2^{-nB})}{N^2}\frac{dt}{t}\sum_{g\in  G}g^* \omega_{nA,nB} \\
&= -\sum_{n=1}^{N-1}\frac{(1-\nu_1^{-nA})(1-\nu_2^{-nB})}{N^2}\frac{dt}{t}\widehat{\omega}_n.
\end{aligned}
$$
\end{proof}

\begin{lem}\label{e_xi_1}
Assume $\sigma$ is given by $\sigma(t)=ct^p$ with $c\in 1+pW.$ Let $h(t)=\prod_{m=0}^s F_{a_n^{(m)}, b_n^{(m)}}(t)_{<p}$ where $s$ is the minimal integer such that $(a_{n}^{(s+1)},b_{n}^{(s+1)})=(a_n,b_n)$ for all $n\in \{1,2,\cdots,N-1\}.$ Then 
$$
e_\xi-\Phi(e_\xi)\equiv -\sum_{n=1}^{N-1}\frac{(1-\nu_1^{-nA})(1-\nu_2^{-nB})}{N^2}\mathscr{F}_{a_n,b_n}^{(\sigma)}(t)\widehat{\omega}_n 
$$ modulo $\sum_{n=1}^{N-1}K\langle t,(t-t^2)^{-1},h(t)^{-1}\rangle e_n^{\rm unit}.$
\end{lem}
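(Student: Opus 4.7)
The plan is to pin down the $\widehat{\omega}_n$-coefficient of $e_\xi-\Phi(e_\xi)$ modulo the unit root part by comparing connections, in parallel to the argument of \cite[\S 4.5]{A} for the analogous statement on the hypergeometric curve $Y/S$.

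First, I would observe that $e_\xi-\Phi(e_\xi)$ lies in the de Rham subspace $H:=\mathcal{O}(S_K)^\dagger\otimes_{\mathcal{O}(S_K)}H^1_{\rm dR}(X_0/S_K)$ of $M_\xi$. Indeed, the surjection $M_\xi\to\mathcal{O}(S_K)^\dagger$ in Theorem \ref{unique_lifting} is compatible with $\Phi$ and $\sigma$, so $\Phi(e_\xi)$ is again a lifting of $1$, and hence the difference lies in the kernel $H$. Using the eigen decomposition of $H^1_{\rm dR}(X_0/S_K)$ under $\mu_N$ together with the bases supplied by Lemma \ref{omega} and the vectors $e^{\rm unit}_n$, I may expand
\[
e_\xi-\Phi(e_\xi)=\sum_{n=1}^{N-1}\bigl(\alpha_n(t)\,\widehat{\omega}_n+\beta_n(t)\,e^{\rm unit}_n\bigr),
\]
and since the assertion is modulo the unit root subspace, it suffices to identify each $\alpha_n(t)$.

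Second, I would apply $\nabla$ to both sides. Lemma \ref{dlog(xi)} computes $\nabla(e_\xi)$ directly, and the compatibility $\nabla\circ\Phi=(\Phi\otimes d\sigma)\circ\nabla$, together with $d\sigma(dt/t)=p\,dt/t$, yields
\[
\nabla(\Phi(e_\xi))=-\sum_{n=1}^{N-1}\frac{(1-\nu_1^{-nA})(1-\nu_2^{-nB})}{N^2}\,p\,\frac{dt}{t}\,\Phi(\widehat{\omega}_n).
\]
Dwork's explicit description of the Frobenius action on hypergeometric differentials (see the computations in \cite[\S 3, \S 4.5]{A}) expresses $\Phi(\widehat{\omega}_n)$ modulo the unit root part as a multiple of $\widehat{\omega}_n$ by an explicit rational combination of $F_{a_n,b_n}(t)$ and $F_{a'_n,b'_n}(t^\sigma)$. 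Comparing the $\widehat{\omega}_n$-coefficients on both sides (modulo the unit root subspace, into which $\alpha_n(t)\,\nabla(\widehat{\omega}_n)$ is absorbed because $\widetilde{\eta}_{nA,nB}$ is horizontal by Lemma \ref{ker_nabla}) produces a first-order differential equation for $\alpha_n(t)$ whose right-hand side coincides with the derivative of $-\frac{(1-\nu_1^{-nA})(1-\nu_2^{-nB})}{N^2}\mathscr{F}^{(\sigma)}_{a_n,b_n}(t)$ read off from Definition \ref{p-adic hypergeometric functions of logarithmic type}.

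Third, I would fix the constant of integration. The semistable reduction of $\overline{f}$ at $t=0$ quoted at the beginning of \S\ref{Hypergeometric Curves}, together with the explicit form of the $K_2$-symbol $\xi$ in (\ref{K2symbol}), determines the limit of $\alpha_n(t)$ as $t\to 0$. As in \cite[\S 2.2, \S 4.5]{A}, the logarithmic behavior of $\xi$ at the singular fiber produces precisely the constant $\psi_p(a_n)+\psi_p(b_n)+2\gamma_p-p^{-1}\log c$ appearing in Definition \ref{p-adic hypergeometric functions of logarithmic type}. Matching constants then gives $\alpha_n(t)=-\frac{(1-\nu_1^{-nA})(1-\nu_2^{-nB})}{N^2}\mathscr{F}^{(\sigma)}_{a_n,b_n}(t)$, which is the claimed congruence. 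The main obstacle is this last step: the differential equation pins down each $\alpha_n(t)$ only up to an additive constant, and extracting the precise combination of $p$-adic digamma values, Euler constants, and $p^{-1}\log c$ requires a careful boundary analysis of $e_\xi$ near the NCD fiber at $t=0$. I would carry this out by transporting the boundary computation of \cite[\S 4.5]{A} from $Y/S$ down to $X/S$ via the quotient map $\rho$ of Lemma \ref{quotient_map}, using that $\xi$ is defined as the $G$-trace of $\xi(\nu_1,\nu_2)$.
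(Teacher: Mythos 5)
The paper does not reprove this lemma at all: its ``proof'' is a one-line citation to \cite[Theorem 4.19]{A}. Your sketch is essentially a reconstruction of that cited argument, and it runs parallel to the proof the paper does write out for the companion Lemma \ref{e_xi_2} (expand $e_\xi-\Phi(e_\xi)$ in the eigenbasis, apply $\nabla$, use Lemma \ref{dlog(xi)} and the Frobenius eigenstructure to get a first-order equation, integrate, fix the constant). So the strategy is the right one, and you correctly identify the determination of the integration constant $\psi_p(a_n)+\psi_p(b_n)+2\gamma_p-p^{-1}\log c$ as the genuinely hard step; like the paper, you ultimately defer that boundary analysis to \cite[\S 4.5]{A}, which is acceptable here.

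One step of your second paragraph is wrong as written and would derail the computation if carried out literally: you claim $\alpha_n(t)\,\nabla(\widehat{\omega}_n)$ is ``absorbed into the unit root subspace because $\widetilde{\eta}_{nA,nB}$ is horizontal.'' But Lemma \ref{ker_nabla} only gives $\nabla\widetilde{\omega}_n\equiv 0$ modulo $\widetilde{\eta}_n$, and $\widehat{\omega}_n$ is $N F_{a_n,b_n}(t)\,\widetilde{\omega}_n$ up to pullback, so $\nabla(\widehat{\omega}_n)\equiv \frac{F'_{a_n,b_n}}{F_{a_n,b_n}}\widehat{\omega}_n\,dt$ modulo the horizontal part --- it is \emph{not} contained in the unit root subspace. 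Dropping that term gives the equation $\alpha_n'=-c_n\,t^{-1}(F_{a_n,b_n}-F^{\sigma}_{a_m,b_m})/F_{a_n,b_n}$ instead of the correct $\alpha_n'+\alpha_n F'_{a_n,b_n}/F_{a_n,b_n}=-c_n\,t^{-1}(F_{a_n,b_n}-F^{\sigma}_{a_m,b_m})/F_{a_n,b_n}$, and integrating the former does not produce $-c_n\,\mathscr{F}^{(\sigma)}_{a_n,b_n}(t)=-c_n\,G_{a_n,b_n}(t)/F_{a_n,b_n}(t)$. The fix is exactly what the paper does in Lemma \ref{e_xi_2}: expand in the basis $\widetilde{\omega}_n,\widetilde{\eta}_n$ (where the connection matrix is strictly lower triangular), solve $G_{a_n,b_n}'=t^{-1}(F_{a_n,b_n}-F^{\sigma}_{a_m,b_m})$ for the $\widetilde{\omega}_n$-coefficient, and only at the end rewrite the answer in terms of $\widehat{\omega}_n$ by dividing by $F_{a_n,b_n}(t)$. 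With that repair your outline agrees with the cited proof.
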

\begin{proof}
See \cite[Theorem 4.19]{A}.
\end{proof}
\subsection{Transformation Formula}
Let $a_i\in \mathbb{Z}_p(0\leq i \leq r-1)$ and put $h(t):=\prod_{i=0}^{r-1}F_{a_i,\cdots,a_i}(t)_{<p},$ where $F_{a_i,\cdots,a_i}(t)$ is hypergeometric power series.
Then there is an involution
$$
\omega : W\langle t,t^{-1},h(t)^{-1}\rangle \longrightarrow W\langle t,t^{-1},h(t)^{-1}\rangle, \quad \omega(f(t))=f(t^{-1}).
$$
This follows from the following proposition.
\begin{prop}\label{involution}
$(1)$ Let $a\in \mathbb{Z}_p.$ Let $F(t):=F_{a,\cdots,a}(t)_{<p} \mod p.$ Then $F(t)=(-1)^{ls} t^lF(t^{-1})$ in $\mathbb{F}_p[t]$ where $l$ is the degree of $F(t)$ which equals the unique integer in $\{0,1,\cdots,p-1\}$ such that $a+l\equiv 0 \mod p.$ \medskip

$(2)$ Let $a_i\in \mathbb{Z}_p(0\leq i \leq r-1)$ and put $h(t):=\prod_{i=0}^{r-1}F_{a_i,\cdots,a_i}(t)_{<p}.$ Then there is a ring homomorphism 
$$
\omega_n: W/p^n[t,t^{-1},h(t)^{-1}]\rightarrow W/p^n[t,t^{-1},h(t)^{-1}], \quad f(t)\mapsto f(t^{-1}).
$$\medskip
$(3)$ There is an involution
$$
\omega : W\langle t,t^{-1},h(t)^{-1}\rangle \longrightarrow W\langle t,t^{-1},h(t)^{-1}\rangle, \quad \omega(f(t))=f(t^{-1}).
$$ 
\end{prop}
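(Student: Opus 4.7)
The plan is to reduce (1) to a direct mod-$p$ coefficient computation; (2) will then follow from (1) together with the universal property of localization, once I verify that $h(t^{-1})$ is a unit in $W/p^n[t,t^{-1},h(t)^{-1}]$; finally (3) is a formal passage to the inverse limit.

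For (1), the key observation is that $a+l\equiv 0\pmod{p}$ with $0\leq l<p$ forces
$$
(a)_k\equiv(-l)(-l+1)\cdots(-l+k-1)\equiv(-1)^k\,\frac{l!}{(l-k)!}\pmod{p}
$$
for $0\leq k\leq l$ (all factors being invertible mod $p$ since $l<p$), whereas $(a)_k\equiv 0\pmod{p}$ for $k>l$, since the factor $a+l$ then appears. I would conclude that modulo $p$,
$$
F(t)\equiv \sum_{k=0}^{l}(-1)^{ks}\binom{l}{k}^{\!s}\, t^k,
$$
a polynomial of exact degree $l$ with leading coefficient $(-1)^{ls}$. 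The symmetry $\binom{l}{k}=\binom{l}{l-k}$ then gives, for the coefficients $c_k$, the relation $c_k=(-1)^{ls}c_{l-k}$, which is precisely the claimed identity $F(t)=(-1)^{ls}t^l F(t^{-1})$ in $\mathbb{F}_p[t]$.

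For (2), I would multiply the relation from (1) over $i=0,\ldots,r-1$ to obtain $h(t^{-1})\equiv \epsilon\, t^{-L} h(t)\pmod{p}$ for some sign $\epsilon\in\{\pm 1\}$ and integer $L:=\sum_i l_i$, where $l_i$ is the degree associated to $a_i$. Writing $h(t^{-1})=\epsilon\, t^{-L} h(t)(1+pv)$ for some $v\in W/p^n[t,t^{-1},h(t)^{-1}]$, the factor $1+pv$ is a unit there, its inverse being the truncated geometric sum $\sum_{i=0}^{n-1}(-pv)^i$ since $(pv)^n=0$. Hence $h(t^{-1})$ is itself a unit, and the universal property of localization produces a unique ring homomorphism $\omega_n$ on $W/p^n[t,t^{-1},h(t)^{-1}]$ extending $t\mapsto t^{-1}$; applying $\omega_n$ twice yields $\omega_n^2=\mathrm{id}$.

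For (3), the $\omega_n$ are plainly compatible with the reduction maps $W/p^{n+1}\twoheadrightarrow W/p^n$, so they assemble into an involution $\omega$ on $W\langle t,t^{-1},h(t)^{-1}\rangle=\varprojlim_n W/p^n[t,t^{-1},h(t)^{-1}]$. The only genuinely non-formal step is the mod-$p$ palindrome identity in (1); after that, (2) is a standard unit-lifting through the nilpotent ideal $pW/p^n$, and (3) is a formal inverse-limit construction, so I do not anticipate any significant obstacles.
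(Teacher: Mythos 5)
Your proposal is correct and follows essentially the same route as the paper: part (1) via the congruence $(a)_k/k!\equiv(-1)^k\binom{l}{k}\pmod p$ and the symmetry of binomial coefficients, part (2) by reducing to the invertibility of $h(t^{-1})$ (your explicit unit-lifting through the nilpotent ideal $pW/p^n$ just spells out what the paper asserts), and part (3) by passing to the inverse limit. No gaps.
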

\begin{proof}
(1)Write 
$$
F(t)=\sum\limits_{i=0}^{l}\bigg(\frac{(a)_i}{i!}\bigg)^st^i.
$$ Since $F(t)=F_{a,\cdots,a}(t)_{<p} \mod p,$ we have that$(a)_l\not\equiv 0 \mod p$ and $(a)_{l+1}\equiv 0 \mod p.$ That is, we have $a+l\equiv 0 \mod p.$\medskip

If $l=i+j$, then
$$
\begin{aligned}
\frac{(a)_i}{i!}&\equiv \frac{(-l)_i}{i!}=(-1)^i\binom{l}{i}=(-1)^i\binom{l}{j}=(-1)^l\frac{(-l)_j}{j!}\equiv (-1)^l\frac{(a)_j}{j!} \mod p.
\end{aligned}
$$
Therefore, we have 
$$
\bigg(\frac{(a)_i}{i!}\bigg)^s\equiv (-1)^{ls} \bigg(\frac{(a)_j}{j!}\bigg)^s \mod p.
$$
This implies 
$$
t^lF(t^{-1})=(-1)^{ls}F(t).
$$
(2) Observe that it is enough to show that 
$$
h(t^{-1})\in (W/p^n[t,t^{-1},h(t)^{-1}])^{\times}.
$$
An element is a unit in $W\langle t,t^{-1},h(t)^{-1}\rangle$ if and only if it is a unit modulo $pW[ t,t^{-1},h(t)^{-1}].$ So if we can prove $h(t^{-1})$ is a unit modulo $pW[ t,t^{-1},h(t)^{-1}],$
then we are done. 
Put
$$
F_i(t):=F_{a_i,\cdots,a_i}(t)_{<p} \mod p.
$$
From (1), we have
$$
F_i(t)=\pm t^{l_i}F_i(t^{-1})
$$ where $l_i$ is the degree of $F_i(t).$
Hence we have 
$$
F_i(t^{-1})^{-1}=\frac{\pm t^{l_i}}{F_i(t)} 
$$ in $W/p[t,t^{-1},h(t)^{-1}],$ i.e. $F_i(t)$ is a unit in $W/p[t,t^{-1},h(t)^{-1}].$ Since $h(t)$ is the product of $F_i(t),$ it is also a unit.\medskip

(3) $\omega$ is defined by using $\omega_n$ in (2) in the following way:
$$
\omega : W\langle t,t^{-1},h(t)^{-1}\rangle \longrightarrow W\langle t,t^{-1},h(t)^{-1}\rangle, \quad (f_n(t))\mapsto (\omega_nf_n(t)).
$$
By (2), $\omega$ is well-defined and also it is an involution.
\end{proof}
\begin{conj}[\textbf{Transformation Formula between $\mathscr{F}_{a,\cdots,a}^{\;(\sigma)}(t)$ and $\widehat{\mathscr{F}}_{a,\cdots,a}^{\;(\widehat{\sigma})}(t)$}] 
\label{transformation-conj}
Let $\sigma(t)=ct^p$ and $\widehat{\sigma}(t)=c^{-1}t^p$. Let $a\in\mathbb{Z}_p\backslash \mathbb{Z}_{\leq 0}$ and the $r$th Dwork prime $a^{(r)}=a$ for some $r>0.$ Put $h(t):=\prod_{i=0}^{r-1}F_{a^{(i)},\cdots,a^{(i)}}(t)_{<p},$ then
$$
\mathscr{F}_{a,\cdots,a}^{\;(\sigma)}(t)=-\widehat{\mathscr{F}}_{a,\cdots,a}^{\;(\widehat{\sigma})}(t^{-1})
$$
in the ring $W\langle t,t^{-1},h(t)^{-1}\rangle$ 
where $\widehat{\mathscr{F}}_{a,\cdots,a}^{\;(\widehat{\sigma})}(t^{-1})$ is defined as $\omega(\widehat{\mathscr{F}}_{a,\cdots,a}^{\;(\widehat{\sigma})}(t))$ and $\mathscr{F}_{a,\cdots,a}^{\;(\sigma)}(t)$ is the $p$-adic hypergeometric functions of logarithmic type.
\end{conj}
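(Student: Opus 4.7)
The plan is to establish Conjecture \ref{transformation-conj} in the special case $s=2$, $a=j/N\in(0,1)$ with $\gcd(j,N)=1$, and $p>N$ (which corresponds to Theorem \ref{transformation-thm}), by matching two computations of the $p$-adic regulator of a $K_2$-symbol on a hypergeometric curve of Gauss type. Given such $a$, choose $A$ with $a=a_1=\{-A/N\}$, take $B=A$ so that $a_n=b_n$, and consider $X: y^N=x^A(1-x)^A(1-(1-t)x)^{N-A}$. By Theorem \ref{unique_lifting} there is a unique lift $e_\xi$ of $1$, and Lemma \ref{e_xi_1} gives
$$
e_\xi-\Phi(e_\xi)\equiv -\sum_{n=1}^{N-1}\frac{(1-\nu_1^{-nA})(1-\nu_2^{-nA})}{N^2}\mathscr{F}_{a_n,a_n}^{(\sigma)}(t)\widehat{\omega}_n
$$
modulo the $e_n^{\mathrm{unit}}$-components. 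This is the first presentation of the regulator, in which $\mathscr{F}^{(\sigma)}$ naturally appears.

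The second presentation comes from the key geometric observation in the introduction: after the ramified base change $t=t_0^N$ and the coordinate change $(z,w,s_0)=(1-x,\,t_0^{A-N}y,\,t_0^{-1})$, the curve $X$ is isomorphic to $\widehat{X}:w^N=z^A(1-z)^A(1-(1-s_0^N)z)^{N-A}$. Thus $X$ and $\widehat X$ realize the same family over $\operatorname{Spec}W[t_0,(t_0-t_0^{N+1})^{-1}]$ linked by the involution $t_0\leftrightarrow t_0^{-1}$, under which a Frobenius $\sigma$ with $\sigma(t)=ct^p$ pulls back (after descent along $t=t_0^N$) to a Frobenius on the $\widehat X$-side with $\widehat\sigma(t)=c^{-1}t^p$. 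I would then recompute $e_\xi-\Phi(e_\xi)$ in the $(z,w)$-coordinates, tracking the transform of the $K_2$-symbol $\{(x-1)/(x-\nu_1),(y-1)/(y-\nu_2)\}$, of the eigendifferentials $\widehat\omega_n$, and of the integral $\int_0^t(-)\,dt/t$ under $t\mapsto t_0^{-N}$. The outcome should be an expression in which each coefficient equals
$$
\widehat{\mathscr{F}}_{a_n,a_n}^{(\widehat\sigma)}(t^{-1})\;=\;\omega\!\left(\frac{t^{-a_n}}{F_{a_n,a_n}(t)}\int_0^t\bigl(t^{a_n}F_{a_n,a_n}(t)-[t^{a_n'}F_{a_n',a_n'}(t)]^{\widehat\sigma}\bigr)\frac{dt}{t}\right),
$$
with the involution $\omega$ of Proposition \ref{involution} being exactly the descent of the geometric involution $t_0\leftrightarrow t_0^{-1}$; the overall minus sign should come from $d\log(t)\mapsto -d\log(t)$ (equivalently, from the sign $(-1)^{se}$ in the definition of $\widehat G^{(\sigma)}$).

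Comparing the two expressions for $e_\xi-\Phi(e_\xi)$ term by term in the $\widehat\omega_n$-basis, and using that the weights $(1-\nu_1^{-nA})(1-\nu_2^{-nA})/N^2$ can be chosen generically nonzero (by varying $(\nu_1,\nu_2)\in\mu_N\times\mu_N$), yields the equality
$$
\mathscr{F}_{a_n,a_n}^{(\sigma)}(t)=-\widehat{\mathscr{F}}_{a_n,a_n}^{(\widehat\sigma)}(t^{-1})
$$
modulo the $e_n^{\mathrm{unit}}$-parts; this identification is first obtained in the rigid/overconvergent category $K\langle t,t^{-1},h(t)^{-1}\rangle$. One then upgrades to an equality in $W\langle t,t^{-1},h(t)^{-1}\rangle$ by combining the congruence relations of Theorem \ref{congruence-thm} (which pin down $\widehat{\mathscr{F}}$ integrally) with the analogous congruences of Asakura for $\mathscr{F}^{(\sigma)}$.

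The main obstacles I anticipate are: (i) the precise bookkeeping of how the $K_2$-symbol and the Frobenius lift descend from the $N$-fold cover $t=t_0^N$ back to the $t$-line, where one must separate the contribution of the unit part from that of the $\widehat\omega_n$-components without losing information; and (ii) identifying the analytic output of the second computation with exactly the series $\widehat{\mathscr{F}}_{a,a}^{(\widehat\sigma)}(t^{-1})$ (including the correct sign, the factor $t^{-a}$, and the $(-1)^{se}$ twist). The general conjecture—arbitrary $a\in\mathbb{Z}_p$ with $a^{(r)}=a$ and arbitrary $s$—seems beyond this geometric approach, since only rational $a\in\frac1N\mathbb{Z}$ and $s=2$ are realized by Gauss-type hypergeometric curves; a separate argument, perhaps via direct $p$-adic manipulation using Lemma \ref{BA_congruence} and the involution $\omega$, would be needed for the general case.
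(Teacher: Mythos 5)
Your proposal follows essentially the same route as the paper: the paper also only proves the conjecture in the special case $s=2$, $a=a_n=\{-nA/N\}$ (Theorem \ref{transformation-thm}), by comparing the expression for $e_\xi-\Phi(e_\xi)$ from Lemma \ref{e_xi_1} with a second computation (Lemma \ref{e_xi_2}) obtained from the alternative coordinate description $(z,w,s_0)=(1-x,t_0^{A-N}y,t_0^{-1})$ of the quotient curve, which is exactly the two-presentations-of-the-regulator argument you outline. Your closing remark that the general conjecture lies beyond this geometric method is also consistent with the paper, which leaves the general case open.
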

Before moving to the next theorem, we prove, as an example, that this conjecture is true modulo $p.$ \medskip
\begin{eg}
By Theorem \ref{congruence-thm} and \cite[Theorem 3.3]{A}, we know that 
$$
\mathscr{F}_{a,\cdots,a}^{\;(\sigma)}(t)\equiv \frac{G^{(\sigma)}_{a,\cdots,a}(t)_{<p}}{F_{a,\cdots,a}(t)_{<p}}, \quad \widehat{\mathscr{F}}_{a,\cdots,a}^{\;(\widehat{\sigma})}(t^{-1})\equiv \left.\frac{\widehat{G}_{a,\cdots,a}^{(\widehat{\sigma})}(t)_{<p}}{F_{a,\cdots,a}(t)_{<p}}\right|_{t^{-1}}\mod pW[[t]].
$$
Let $F(t)=F_{a,\cdots,a}(t)_{<p},$ $G(t)\equiv G^{(\sigma)}_{a,\cdots,a}(t)_{<p}$ and $\widehat{G}(t)\equiv\widehat{G}_{a,\cdots,a}^{(\widehat{\sigma})}(t)_{<p} \mod p.$ 

Then by Proposition \ref{involution}, one has 
$$
\frac{G(t)}{F(t)}+\frac{\widehat{G}(t^{-1})}{F(t^{-1})}=\frac{G(t)}{(-1)^{ls}t^lF(t^{-1})}+\frac{\widehat{G}(t^{-1})}{F(t^{-1})}=\frac{G(t)+(-1)^{ls}t^l\widehat{G}(t^{-1})}{(-1)^{ls}t^lF(t^{-1})}.
$$ We claim that $G(t)+(-1)^{ls}t^l\widehat{G}(t^{-1})\equiv 0 \mod p.$ Indeed, since 
$$
B_k=A_k\frac{B_k}{A_k}, \quad \widehat{B}_k=A_k\frac{\widehat{B}_k}{A_k} \equiv 0 \mod p
$$when $l+1 \leq k \leq p-1$, we have $G(t)=\sum_{i=0}^l B_k t^k$ and $\widehat{G}(t)=\sum_{i=0}^l \widehat{B}_kt^k.$ Thus 
$$
G(t)+(-1)^{ls}t^l\widehat{G}(t^{-1})=\sum_{i=0}^l B_k t^k+(-1)^{ls}\sum_{i=0}^l \widehat{B}_{l-k}t^k=\sum_{i=0}^l \bigg(B_k+(-1)^{ls}\widehat{B}_{l-k}\bigg)t^k.
$$ Then for $1\leq k\leq l$, we have
$$
B_k+(-1)^{ls}\widehat{B}_{l-k}=\frac{A_k}{k}+(-1)^{ls}\frac{A_{l-k}}{a+l-k}\equiv \frac{1}{k}\bigg(A_k-(-1)^{ls}A_{l-k}\bigg)\equiv 0 \mod p.
$$ When $k=0,$ 
$$
\begin{aligned}
&B_0+(-1)^{ls}\widehat{B}_l \\
=&s(\psi_p(a)+\gamma_p)-p^{-1}\log(c)-(-1)^{ls}\frac{1}{a+l}\bigg(A_l-c^{\frac{l+a}{p}}(-1)^{se}\bigg) \\
=&s(\psi_p(a)+\gamma_p)-(-1)^{ls}\frac{A_l}{a+l}-p^{-1}\log(c)+(-1)^{s(e+l)}\frac{c^{\frac{l+a}{p}}}{a+l}.
\end{aligned}
$$ Write $c=1+pz$ and $a+l=dp^n$ with $p\nmid d.$ Then 
$$
-p^{-1}\log(c)+(-1)^{s(e+l)}\frac{c^{\frac{l+a}{p}}}{a+l}\equiv (-1)^{s(e+l)}\frac{1}{a+l} \mod p.
$$ So 
$$
\begin{aligned}
&B_0+(-1)^{ls}\widehat{B}_l \\
\equiv &s(\psi_p(a)+\gamma_p)+(-1)^{ls}\frac{A_l-(-1)^{se}}{a+l}\\
\overset{(*)}{\equiv} &s(\psi_p(-l)+\gamma_p)-s(\psi_p(1+l)+\gamma_p) \mod p
\end{aligned}
$$ where $(*)$ follows from \cite[(2.13)]{A} and imitate the proof of Lemma \ref{BlAl}.
Since $\psi_p(-l)=\psi_p(1+l)$ (cf. \cite[Theorem 2.4 (2)]{A}), we obtain
$$
B_0+(-1)^{ls}\widehat{B}_l\equiv 0 \mod p.
$$
\end{eg}
~\medskip
Recall $a_n=\{-nA/N\}$ in Section 4.2. Let $r\in\mathbb{Z}_{\geq 1}$ is a number such that $a_n^{(r)}=a_n.$ We will prove a special case of Conjecture 4.9 in this section.

\begin{thm}\label{transformation-thm}
Let $\sigma(t)=ct^p$ and $\widehat{\sigma}(t)=c^{-1}t^p$. Put $h(t)=\prod_{i=0}^{r-1}F_{a_n^{(i)},a_n^{(i)}}(t)_{<p}.$ Then
$$
\mathscr{F}_{a_n,a_n}^{\;(\sigma)}(t)=-\widehat{\mathscr{F}}_{a_n,a_n}^{\;(\widehat{\sigma})}(t^{-1})
$$
in the ring $W\langle t,t^{-1},h(t)^{-1}\rangle$ 
where $\mathscr{F}_{a_n,a_n}^{\;(\sigma)}(t)$ is the $p$-adic hypergeometric functions of logarithmic type.
\end{thm}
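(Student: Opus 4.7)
The plan is to compute the $p$-adic regulator of a suitable $K_2$-symbol on the hypergeometric curve of Gauss type $X: y^N = x^A(1-x)^A(1-(1-t)x)^{N-A}$ (the case $B=A$, so $a_n = b_n$) in two different geometric models and to compare the resulting expressions. Associated to the $K_2$-symbol $\xi$ of Theorem \ref{unique_lifting}, there is a unique lifting $e_\xi \in \mathrm{Fil}^0 M_\xi$, and by Lemma \ref{e_xi_1}, choosing $\nu_1, \nu_2 \in \mu_N(K)$ so that $(1-\nu_1^{-nA})(1-\nu_2^{-nA}) \neq 0$ for every $n$, one has
\[
e_\xi - \Phi(e_\xi) \equiv -\sum_{n=1}^{N-1} \frac{(1-\nu_1^{-nA})(1-\nu_2^{-nA})}{N^2}\,\mathscr{F}^{(\sigma)}_{a_n, a_n}(t)\,\widehat{\omega}_n
\]
modulo the unit part $\sum_n K\langle t, (t-t^2)^{-1}, h(t)^{-1}\rangle\, e_n^{\mathrm{unit}}$. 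This is the first expression for the regulator, expressed via the logarithmic-type $p$-adic hypergeometric function.

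For the second expression, I would exploit the geometric isomorphism indicated in the introduction. After passing to the ramified $N$-fold cover $t = t_0^N$ and lifting $\sigma$ by $\sigma(t_0) = c^{1/N}t_0^p$, the change of variables $(z, w, s_0) = (1-x, t_0^{A-N}y, t_0^{-1})$ identifies $X$ with the isomorphic Gauss-type curve $\widehat{X}: w^N = z^A(1-z)^A(1-(1-s_0^N)z)^{N-A}$ whose parameter satisfies $s_0^N = t_0^{-N} = t^{-1}$; the induced Frobenius on $s_0^N = t^{-1}$ is exactly $\widehat{\sigma}(t^{-1}) = c^{-1}t^{-p}$. A direct computation using $A_n = nA/N + a_n - 1$ shows that the eigen-differentials of the two curves are related by $\omega_n^X = -s_0^{Na_n}\,\omega_n^{\widehat{X}} = -t^{-a_n}\,\omega_n^{\widehat{X}}$, and it is precisely this monomial factor $t^{-a_n}$ that produces the outer factor $t^{-a}$ in the definition of $\widehat{G}^{(\widehat{\sigma})}_{a,a}(t)$. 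Rewriting $e_\xi - \Phi(e_\xi)$ in terms of $\widehat{\omega}_n^{\widehat{X}}$ using this base-change identity, and demanding compatibility with a second application of Lemma \ref{e_xi_1} on $\widehat{X}$ (now working in the $s_0^N$-variable), forces the coefficient of $\widehat{\omega}_n^{\widehat{X}}$ to carry exactly the integral defining $\widehat{\mathscr{F}}^{(\widehat{\sigma})}_{a_n, a_n}(t^{-1})$. Since $s = 2$, the sign $(-1)^{se}$ reduces to $1$, and comparison of coefficients of $\widehat{\omega}_n^{\widehat{X}}$ in the two descriptions yields $\mathscr{F}^{(\sigma)}_{a_n,a_n}(t) = -\widehat{\mathscr{F}}^{(\widehat{\sigma})}_{a_n,a_n}(t^{-1})$ in $W\langle t, t^{-1}, h(t)^{-1}\rangle$.

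The main obstacle will be the bookkeeping in the second computation: verifying that the integral which emerges on $\widehat{X}$ matches the integral defining $\widehat{G}^{(\widehat{\sigma})}$ on the nose. In particular, one must (i) trace the pullbacks of $\widehat{\omega}_n$, $e_n^{\mathrm{unit}}$, and the unit-part eigenvectors through the explicit isomorphism $(x,y,t_0) \mapsto (1-x, t_0^{A-N}y, t_0^{-1})$; (ii) account for the absence of the boundary terms $\psi_p(a_n)+\gamma_p$ and $-p^{-1}\log(c)$ in $\widehat{G}$, which must arise as a cancellation between the $\sigma$- and $\widehat{\sigma}$-contributions appearing on the two sides; and (iii) perform the descent from the ramified cover $t = t_0^N$ back to $S_K$ in a Galois-equivariant manner so that the resulting identity lies in $W\langle t, t^{-1}, h(t)^{-1}\rangle$ rather than on the cover. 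Overcoming these points, together with a careful tracking of signs, will complete the proof.
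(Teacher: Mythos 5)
Your overall strategy is the same as the paper's: compute $e_\xi-\Phi(e_\xi)$ once on $X$ via Lemma \ref{e_xi_1} to get $-\mathscr{F}^{(\sigma)}_{a_n,a_n}(t)$, and once more after the coordinate change $(z,w,s_0)=(1-x,t_0^{A-N}y,t_0^{-1})$ to get $\widehat{\mathscr{F}}^{(\widehat{\sigma})}_{a_n,a_n}(t^{-1})$, then compare coefficients of $\widehat{\omega}_n$ modulo the unit part. You have also correctly identified the key geometric ingredient, namely that the eigen-differentials in the two models differ by the monomial $t^{\pm a_n}$ and that this monomial is what turns the integrand into $t^{a_n}F_n(t)-(t^{a_m}F_m(t))^{\widehat\sigma}$, i.e.\ into the integral defining $\widehat{G}^{(\widehat\sigma)}$.

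There is, however, a genuine gap at the decisive step, which you flag as an ``obstacle'' but resolve with the wrong mechanism. You propose to obtain the second expression by ``demanding compatibility with a second application of Lemma \ref{e_xi_1} on $\widehat{X}$'' and to explain the absence of the terms $\psi_p(a_n)+\gamma_p-p^{-1}\log(c)$ as ``a cancellation between the $\sigma$- and $\widehat\sigma$-contributions.'' Neither of these is how the identity is actually pinned down, and a second application of Lemma \ref{e_xi_1} on $\widehat{X}$ would produce the logarithmic-type function again, not $\widehat{\mathscr{F}}$. What the paper does instead (Lemma \ref{e_xi_2}) is: write $\widehat e_\xi-\widehat\Phi(\widehat e_\xi)=\sum_n(\cdots)(\widehat E_1^{(n)}\widetilde\omega_n+\widehat E_2^{(n)}\widetilde\eta_n)$, apply the Gauss--Manin connection using $\nabla\widehat\Phi=\widehat\Phi\nabla$ and $\Phi(\widetilde\omega_m)\equiv p^{-1}\widetilde\omega_n$ mod $\widetilde\eta_n$, and integrate the resulting ODE
$\frac{d\widehat E_1^{(n)}}{dt}=-t^{-1}\bigl(t^{a_n}F_n(t)-(t^{a_m}F_m(t))^{\widehat\sigma}\bigr)$.
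The whole content of the step is then the determination of the integration constant $C$: one shows $C=0$ because $\widehat E_1^{(n)}(t_0)t_0^{-Na_n}/F_n(t_0^N)$ must be overconvergent, $\widehat{\mathscr{F}}^{(\widehat\sigma)}_{a_n,a_n}$ is convergent by Corollary \ref{overconvergent} (which requires the congruence relations of Theorem \ref{congruence-thm}), and a nonzero $C't_0^{-Na_n}/F_n(t_0^N)$ cannot lie in $K\langle t_0,(t_0-t_0^2)^{-1},h(t_0^N)^{-1}\rangle$. Without this uniqueness-of-the-convergent-solution argument there is no reason the coefficient equals $\widehat{\mathscr{F}}$ ``on the nose'' rather than $\widehat{\mathscr{F}}$ plus a constant multiple of $t^{-a_n}/F_n$, so your items (i)--(iii) do not close the proof as stated; you need to replace the proposed ``cancellation'' by this overconvergence argument.
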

Before giving the proof, we need some settings. First, we assume $A=B$ and let $t_0^N=t.$ Then we have two descriptions of the quotient curve $Y_G/S_K$ (Lemma \ref{quotient_map}): \medskip

(i) $Y_G\simeq X$ with coordinates $(x,y,t_0),$

$$
X: y^N=x^A(1-x)^A(1-(1-t_0^N)x)^{N-A}
$$
(ii) $Y_G\simeq \widehat{X}$ with coordinates $(z,w,s_0),$ 
$$
\widehat{X}: w^N=z^A(1-z)^A(1-(1-s_0^N)z)^{N-A}
$$ where $z=1-x, w=t_0^{A-N}y, s_0=t_0^{-1}.$

\begin{lem}\label{e_xi_2}
Assume $\widehat{\sigma}$ is given by $\widehat{\sigma}(t)=c^{-1}t^p$ with $c\in 1+pW.$ Let $h(t)=\prod_{m=0}^s F_{a_n^{(m)}, b_n^{(m)}}(t)_{<p}$ where $s$ is the minimal integer such that $(a_{n}^{(s+1)},b_{n}^{(s+1)})=(a_n,b_n)$ for all $n\in \{1,2,\cdots,N-1\}.$ Then 
$$
e_\xi-\Phi(e_\xi)\equiv \sum_{n=1}^{N-1}\frac{(1-\nu_1^{-nA})(1-\nu_2^{-nB})}{N^2}\widehat{\mathscr{F}}_{a_n,b_n}^{(\widehat{\sigma})}(s_0^N)\widehat{\omega}_n 
$$ modulo $\sum_{n=1}^{N-1}K\langle t,(t-t^2)^{-1},h(t)^{-1}\rangle e_n^{\rm unit}.$
\end{lem}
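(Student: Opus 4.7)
The plan is to replay the derivation of Lemma \ref{e_xi_1} (\cite[Theorem 4.19]{A}) inside the alternative description $Y_G\cong\widehat{X}$ with coordinates $(z,w,s_0)=(1-x,t_0^{A-N}y,t_0^{-1})$ and $t_0^N=t$. Because $Y_G$ is the same $K$-scheme in either description, the $K_2$-symbol $\xi$, the exact sequence of Theorem \ref{unique_lifting}, and the unique lift $e_\xi\in{\rm Fil}^0 M_\xi$ are unchanged; what changes is the presentation of the Frobenius (on the subring with uniformizer $s_0^N=t^{-1}$, the map $\sigma$ acts as $s_0^N\mapsto c^{-1}s_0^{Np}$, which is precisely $\widehat{\sigma}$) and the presentation of the basis of each eigen component of $H^1_{\rm dR}(Y_K/S_K)$.

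The first key step is the comparison of bases on $Y_K$. A direct substitution of $z=1-x$, $w=t_0^{A-N}y$, $s_0=t_0^{-1}$ into the defining formula for $\omega_n^{\widehat{X}}=z^{A_n}(1-z)^{A_n}(1-(1-s_0^N)z)^{n-1-A_n}dz/w^n$, using the identity $1-(1-s_0^N)z=t_0^{-N}[1-(1-t_0^N)x]$ and collecting $t_0$-powers via $Na_n=N-(An-NA_n)$, yields
\[
\omega_n^{\widehat{X}}=-t_0^{Na_n}\,\omega_n^X
\]
on $Y_K$ (once we pass to the degree-$N$ cover where $t_0$ is defined). Summing over $G$ produces $\widehat{\omega}_n^{\widehat{X}}=-t_0^{Na_n}\widehat{\omega}_n^X$, and by Lemma \ref{ker_nabla} applied on each side the two unit-root $K$-lines inside the $(nA,nA)$-eigen component agree up to a scalar, so the $e_n^{\rm unit,\widehat{X}}$- and $e_n^{\rm unit,X}$-subspaces coincide once one works in the symmetric ring of Proposition \ref{involution}.

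Second, I would re-run Asakura's Frobenius computation (the proof of \cite[Theorem 4.19]{A}) in the $\widehat{X}$-picture, with $\widehat{\sigma}$ as the Frobenius and $\widehat{\omega}_n^{\widehat{X}},e_n^{\rm unit,\widehat{X}}$ as the basis. The effect of the rescaling $\omega_n^{\widehat{X}}=-t_0^{Na_n}\omega_n^X$ is to insert an extra factor $t_0^{Na_n}=t^{a_n}$ inside the integrand whose antiderivative Asakura extracts as $\mathscr{F}^{(\widehat{\sigma})}$; this converts the integrand $F(t)-F'(t^{\widehat{\sigma}})$ of $\mathscr{F}^{(\widehat{\sigma})}$ into the integrand $t^aF(t)-[t^{a'}F'(t)]^{\widehat{\sigma}}$ of $\widehat{G}^{(\widehat{\sigma})}$ from Definition \ref{def-p-adicHG}. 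The additive constants $2\psi_p(a_n)+2\gamma_p-p^{-1}\log(c^{-1})$ appearing in $\mathscr{F}^{(\widehat{\sigma})}$ drop out because the rescaled integrand is antidifferentiated against $t_0^{Na_n-1}dt_0$ rather than $dt_0/t_0$, so no logarithmic boundary contribution at $t_0=0$ is needed; the $t^{-a}$-prefactor in the definition of $\widehat{\mathscr{F}}^{(\widehat{\sigma})}$ takes care of the remaining normalization. Combining with the $-1$ from $\omega_n^{\widehat{X}}=-t_0^{Na_n}\omega_n^X$ gives the $+$ sign in the statement.

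The main obstacle will be the bookkeeping in step two: rigorously re-deriving Asakura's formula with the rescaled basis while threading the $t_0$-variable through the connection and the Frobenius lift on the $N$-th-root cover, handling the non-integrality of $a_n$ (so that the abstract symbol $t^{a_n}$ of \S 2 matches the geometric $t_0^{Na_n}$), and verifying that the output is the coefficient sequence $\{B_k\}$ of Definition \ref{def-p-adicHG} rather than that of $\mathscr{F}^{(\widehat{\sigma})}$. Once this rewrite is in hand, the conclusion follows immediately by substituting $\widehat{\omega}_n^{\widehat{X}}=-t_0^{Na_n}\widehat{\omega}_n^X$ into the $\widehat{X}$-version of the congruence from Lemma \ref{e_xi_1}.
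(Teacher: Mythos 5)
Your overall strategy is the same as the paper's: pass to the second description $Y_G\cong\widehat{X}$ in the coordinates $(z,w,s_0)=(1-x,t_0^{A-N}y,t_0^{-1})$, observe that the base Frobenius becomes $\widehat{\sigma}$ there, track the rescaling of the eigenforms by $t_0^{Na_n}=t^{a_n}$ coming from $w=t_0^{A-N}y$, and re-run the computation of $e_\xi-\Phi(e_\xi)$ (the analogue of \cite[Theorem 4.19]{A}) so that the integrand $F(t)-F'(t^{\sigma})$ is replaced by $t^{a}F(t)-[t^{a'}F'(t)]^{\widehat{\sigma}}$. This is exactly what the paper does: it applies $\nabla$ to $\widehat{e}_\xi-\widehat{\Phi}(\widehat{e}_\xi)$, uses $\nabla\widehat{\Phi}=\widehat{\Phi}\nabla$, Lemma \ref{dlog(xi)} in the new coordinates, and $\widehat{\Phi}(\widetilde{\omega}_m)\equiv p^{-1}\widetilde{\omega}_n$ to obtain the ODE $\frac{d\widehat{E}_1^{(n)}}{dt}=-t^{-1}\bigl(t^{a_n}F_n(t)-(t^{a_m}F_m(t))^{\widehat{\sigma}}\bigr)$ for the $\widetilde{\omega}_n$-coefficient.

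There is one genuine gap. The ODE determines $\widehat{E}_1^{(n)}$ only up to an additive constant $C$, and the identification of $-\widehat{E}_1^{(n)}$ with the specific primitive $\int_0^t(\cdots)\frac{dt}{t}$ (hence with $\widehat{G}^{(\widehat{\sigma})}_{a_n,a_n}$) requires proving $C=0$. Your justification --- that the rescaled integrand is antidifferentiated against $t_0^{Na_n-1}dt_0$ so ``no logarithmic boundary contribution at $t_0=0$ is needed'' --- explains why the formal primitive of \S 2 has no constant term, but it does not show that the actual coefficient function $\widehat{E}_1^{(n)}$ agrees with that formal primitive rather than differing from it by a nonzero constant. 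The paper closes this by an overconvergence argument: $\widehat{E}_1^{(n)}(t_0)t_0^{-Na_n}/F_n(t_0^N)$ must be a convergent function, and by Corollary \ref{overconvergent} the choice $C=0$ already yields one, so a second admissible constant $C'$ would force $C't_0^{-Na_n}/F_n(t_0^N)\in K\langle t_0,(t_0-t_0^2)^{-1},h(t_0^N)^{-1}\rangle$, which is impossible (as in \cite[Theorem 4.9]{A}). You need to supply this (or an equivalent boundary/uniqueness argument) for the proof to be complete; the remaining steps of your outline, including the sign bookkeeping from $\omega_n^{\widehat{X}}=-t_0^{Na_n}\omega_n^{X}$ and the return to the $(z,w,s_0)$-coordinates modulo $K((t))\widetilde{\eta}_n$, match the paper's argument.
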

\begin{proof}
We consider the quotient map (Lemma \ref{quotient_map}) $Y_G\simeq \widehat{X}$ in coordinate $(z,w,s_0).$ From the coordinate $(z,w,s_0),$ let $e_\xi^{\widehat{X}}$ be the unique lifting in Theorem \ref{unique_lifting}. We  write $\widehat{e}_\xi$ for $e_\xi^{\widehat{X}}$ via the change of coordinate $(z,w,s_0)=(1-x, t_0^{A-N}y, t_0^{-1}),$ and $\widehat{\Phi}$ is the Frobenius action which is $\widehat{\sigma}$-linear.  Write
\begin{equation}\label{hat(e)}
 \widehat{e}_\xi-\widehat{\Phi}(\widehat{{e}}_\xi)=\sum_{n=1}^{N-1}\frac{(1-\nu_1^{-nA})(1-\nu_2^{-nB})}{N}\widehat{E}_1^{(n)}(t_0){\widetilde{\omega}}_n+\widehat{E}_2^{(n)}(t_0)\widetilde{\eta}_n\in K((t))\otimes H^1_{\rm dR}({X}/S_K).  
\end{equation}
Apply the Gauss-Manin connection $\nabla$ on (\ref{hat(e)}). Since $\nabla \widehat{\Phi}=\widehat{\Phi}\nabla$ and  
$$
\nabla (\widehat{e}_\xi)=-\sum_{n=1}^{N-1}\frac{(1-\nu_1^{-nA})(1-\nu_2^{-nA})}{N^2}\frac{dt}{t}t^{a_n}(N\omega_n),
$$ from Lemma \ref{dlog(xi)} and change of coordinate $(z,w,s_0)=(1-x, t_0^{A-N}y, t_0^{-1}),$
we have

\begin{equation}\label{equation_2}
\begin{aligned}
&-\sum\limits_{n=1}^{N-1}\frac{(1-\nu_1^{-nA})(1-\nu_2^{-nA})}{N}(1-\widehat{\Phi})\bigg(t^{a_n}F_n(t)\frac{dt}{t}\wedge{\widetilde{\omega}}_n\bigg) \\
=&\sum\limits_{n=1}^{N-1}\frac{(1-\nu_1^{-nA})(1-\nu_2^{-nA})}{N}\nabla(\widehat{E}_1^{(n)}(t_0){\widetilde{\omega}}_n+\widehat{E}_2^{(n)}(t_0)\widetilde{\eta}_n).
\end{aligned}
\end{equation}\medskip

By \cite[Proposition 4.7]{A}, we have $\widehat{\Phi}({\widetilde{\omega}}_m)\equiv p^{-1}{\widetilde{\omega}}_n \mod K((t_0))\widetilde{\eta}_{n}$ where $m$ is the unique integer in $\{1,2,...,N-1\}$ such that $pm\equiv n \mod N$.\medskip

Therefore
$$
\text{LHS of (\ref{equation_2})}\equiv -\sum\limits_{n=1}^{N-1}\frac{(1-\nu_1^{-nA})(1-\nu_2^{-nA})}{N}\bigg[t^{a_n}F_n(t)-\big(t^{a_m}F_m(t)\big)^{\widehat{\sigma}}\bigg]\frac{dt}{t}\wedge\widetilde{{\omega}}_n. 
$$

On the other hand, it follows from \cite[Proposition 4.3]{A}, we have
$$
\text{RHS of (\ref{equation_2})}\equiv \sum\limits_{n=1}^{N-1}\frac{(1-\nu_1^{-nA})(1-\nu_2^{-nA})}{N}\frac{d\widehat{E}_1^{(n)}}{dt}dt\wedge \widetilde{\omega}_n \mod K((t_0))\widetilde{\eta}_n.
$$
Thus 
$$
\frac{d\widehat{E}_1^{(n)}}{dt}=-\frac{t^{a_n}F_n(t)-\big(t^{a_m}F_m(t)\big)^{\widehat{\sigma}}}{t}.
$$
Namely, we have
$$
-\widehat{E}_1^{(n)}(t_0)=C+\int_{0}^{t}t^{a_n}F_n(t)-\big(t^{a_m}F_m(t)\big)^{\widehat{\sigma}}\frac{dt}{t}
$$
for some constant $C\in K$ and with $t=t_0^N.$ We claim that this constant $C$ is $0$. Indeed, since
$\widehat{E}_1^{(n)}(t_0)/F_n(t_0^N)$ is an overconvergent function, 
$\widehat{E}_1^{(n)}(t_0)t_0^{-Na_n}/F_n(t_0^N)$ is also overconvergent. If $C=0$, then 
$\widehat{E}_1^{(n)}(t_0)t_0^{-Na_n}/F_n(t_0^N)=\widehat{\mathscr{F}}_{a_n,a_n}^{\;(\widehat{\sigma})}(t)$ is a 
convergent function by Corollary \ref{overconvergent}. If there is another $C^\prime$ such that $\widehat{E}_1^{(n)}(t_0)t_0^{-Na_n}/F_n(t_0^N)$ is a convergent function, then after subtraction, we have
$$
\frac{C^\prime t_0^{-Na_n}}{F_n(t_0^N)}\in K\langle t_0, (t_0-t_0^2)^{-1}, h(t_0^N)^{-1} \rangle.
$$ This is a contradiction (as it is shown in the proof of \cite[proof in Theorem 4.9]{A}). So $C$ must be $0$. Therefore we have
$$
\widehat{e}_\xi-\widehat{\Phi}(\widehat{{e}}_\xi)=\sum_{n=1}^{N-1}\frac{(1-\nu_1^{-nA})(1-\nu_2^{-nB})}{N}[-\widehat{\mathscr{F}}_{a_n,a_n}^{(\widehat{\sigma})}(t)\cdot t^{a_n}]{{\omega}}_n \mod \widetilde{\eta}_n.
$$ 
Now we write everything back to coordinate $(z,w,s_0).$
Since ${\rm Ker}\nabla$ is generated by $\{\widetilde{\eta}_n\}$ over $K$ (Lemma \ref{ker_nabla}) and ${\rm Ker}\nabla$ contains in ${\rm Ker}\nabla$ via change of coordinate, we have
$$
{e}_\xi^{\widehat{X}}-{\Phi}({{e}}_\xi^{\widehat{X}})=\sum_{n=1}^{N-1}\frac{(1-\nu_1^{-nA})(1-\nu_2^{-nB})}{N^2}[\widehat{\mathscr{F}}_{a_n,a_n}^{(\widehat{\sigma})}(s_0^{-N})]N{{\omega}}_n \mod \widetilde{\eta}_n.
$$ in coordinate $(z,w,s_0).$ Therefore from Lemma \ref{omega}, we have
$$
e_\xi-\Phi(e_\xi)\equiv \sum_{n=1}^{N-1}\frac{(1-\nu_1^{-nA})(1-\nu_2^{-nB})}{N^2}\widehat{\mathscr{F}}_{a_n,b_n}^{(\widehat{\sigma})}(t^{-1})\widehat{\omega}_n 
$$ modulo $\sum_{n=1}^{N-1}K\langle t,(t-t^2)^{-1},h(t)^{-1}\rangle e_n^{\rm unit}.$
This completes the proof.
\end{proof}

Using these lemmas, we can prove Theorem \ref{transformation-thm}.
\begin{proof}[(proof of Theorem \ref{transformation-thm})]

By Lemma \ref{e_xi_1}, we know
\begin{equation}\label{equation_e_xi_1}
e_{\xi}-\Phi(e_{\xi})\equiv\sum\limits_{n=1}^{N-1}\frac{(1-\nu_1^{-nA})(1-\nu_2^{-nA})}{N^2}\big[-\mathscr{F}_{a_n,a_n}^{(\sigma)}(t)\big]\widehat{\omega}_n 
\end{equation} modulo $\sum_{n=1}^{N-1} K\langle t,(t-t^2)^{-1}, h(t)^{-1}\rangle e_n^{\rm unit}$.\medskip

Also, by Lemma \ref{e_xi_2}, we have
\begin{equation}\label{equation_e_xi_2}
e_{\xi}-\Phi(e_{\xi})\equiv \sum\limits_{n=1}^{N-1}\frac{(1-\nu_1^{-nA})(1-\nu_2^{-nA})}{N^2}\big[\widehat{\mathscr{F}}_{a_n,a_n}^{(\widehat{\sigma})}(t^{-1})\big]\widehat{\omega}_n 
\end{equation} modulo $\sum_{n=1}^{N-1} K\langle t,(t-t^2)^{-1}, h(t)^{-1}\rangle e_n^{\rm unit}$.
\medskip

Comparing equations (\ref{equation_e_xi_1}) and (\ref{equation_e_xi_2}), we have
$$
-\widehat{\mathscr{F}}_{a_n,a_n}^{(\widehat{\sigma})}(t)=\mathscr{F}_{a_n,a_n}^{(\sigma)}(t^{-1}).
$$
This completes the proof.
\end{proof}
\subsection{Transformation formula on Dwork's $p$-adic Hypergeometric Functions}
In this section, the settings are the same as in Section 4.3. First, let us recall the definition of Dwork's $p$-adic hypergeometric functions.
\begin{defi}[\textbf{Dwork's $p$-adic hypergeometric functions}]\label{Dwork's $p$-adic hypergeometric functions}
Let $s \geq 1$ be an integer. For $(a_1,\cdots,a_s)\in \mathbb{Z}_p^s$, Dwork's $p$-adic hypergeometric function is defined as
$$
 \mathscr{F}^{{\rm Dw}}_{a_1,\cdots,a_s}(t):=F_{a_1,\cdots,a_s}(t)/F_{a_1^\prime,\cdots,a_s^\prime}(t^p)
$$
where $F_{a_1,\cdots,a_s}(t)$ and $F_{a_1^\prime,\cdots,a_s^\prime}(t)$ are hypergeometric power series.
\end{defi}
\begin{remark}
Dwork's $p$-adic hypergeometric function belongs to $W\langle t, f(t)^{-1}\rangle$ where 
$$
f(t):=\prod_{i=0}^N F_{a_1^{(i)},\cdots, a_s^{(i)}}(t)_{<p}
$$ and $N$ is an integer such that
$$
\{\overline{F_{a_1^{(i)},\cdots, a_s^{(i)}}(t)_{<p}} \; | \; i\in \mathbb{Z}_{\geq 0}\}=\{\overline{F_{a_1^{(i)},\cdots, a_s^{(i)}}(t)_{<p}} \; | \; i=0,1,\cdots,N\}
$$ with $\overline{f(t)}=f(t) \mod p$ (consequence of Dwork's congruence). 
\end{remark}

\begin{conj}[\textbf{Transformation formula on Dwork's $p$-adic Hypergeometric Functions}]\label{Dwork-trans-conj}
Suppose $a_1=\cdots=a_s=a.$ Let $l$ is the unique integer in $\{0,1,\cdots,p-1\}$ such that $a+l\equiv 0 \mod p.$ Then
$$
\mathscr{F}^{{\rm Dw}}_{a,\cdots,a}(t)=((-1)^st)^{l}\mathscr{F}^{{\rm Dw}}_{a,\cdots,a}(t^{-1}).
$$
\end{conj}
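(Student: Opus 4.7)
The plan is to verify the conjecture modulo each power of $p$, reducing via Dwork's congruence to a Laurent-polynomial identity and then bootstrapping from the mod-$p$ palindrome of Proposition~\ref{involution}(1). In detail, write $F := F_{a,\cdots,a}$, $F' := F_{a',\cdots,a'}$, and $h(t) := \prod_{i=0}^{r-1}F_{a^{(i)},\cdots,a^{(i)}}(t)_{<p}$. By Dwork's congruence and the involution $\omega$ of Proposition~\ref{involution}(3), we have $\mathscr{F}^{{\rm Dw}}_{a,\cdots,a}(t)\equiv F(t)_{<p^n}/F'(t^p)_{<p^n}$ and $\mathscr{F}^{{\rm Dw}}_{a,\cdots,a}(t^{-1})\equiv F(t^{-1})_{<p^n}/F'(t^{-p})_{<p^n}$ modulo $p^n$ in $(W/p^n)[t,t^{-1},h(t)^{-1}]$. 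Cross-multiplying reduces the conjecture to the Laurent-polynomial congruence
$$F(t)_{<p^n}\cdot F'(t^{-p})_{<p^n}\;\equiv\;(-1)^{ls}\,t^{l}\cdot F(t^{-1})_{<p^n}\cdot F'(t^{p})_{<p^n}\pmod{p^n}$$
for every $n\geq 1$, which will be the working form of the statement.

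For the mod-$p$ case, iterate the factorization from the proof of Corollary~\ref{overconvergent}: $F(t)_{<p^n}\equiv\prod_{i=0}^{n-1}\bigl[F_{a^{(i)},\cdots,a^{(i)}}(t)_{<p}\bigr]^{p^i}\pmod{p}$. Proposition~\ref{involution}(1) applied factor-by-factor yields $F(t)_{<p^n}\equiv(\pm)\,t^{L_n}F(t^{-1})_{<p^n}\pmod{p}$ with $L_n := \sum_{i=0}^{n-1}p^il_i$ and $l_i\in\{0,\ldots,p-1\}$ the digit with $a^{(i)}+l_i\equiv 0\pmod{p}$; the same argument applied to $F'$ (which drops the $i=0$ factor) gives $F'(t^{p})_{<p^n}\equiv(\pm)\,t^{L_n-l}F'(t^{-p})_{<p^n}\pmod{p}$. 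The two shifts then combine to $t^l$, the two signs to $(-1)^{ls}$, and the mod-$p$ case follows.

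To lift from mod $p^{n-1}$ to mod $p^n$, I would induct on $n$: by the inductive hypothesis the defect $D_n(t)$ in the displayed congruence equals $p^{n-1}E_n$ for some $E_n\in W[t,t^{-1},h(t)^{-1}]$, and it remains to show $E_n\equiv 0\pmod{p}$. The tool needed is a $p$-adic refinement of Proposition~\ref{involution}(1) in the spirit of Lemma~\ref{BlAl}: under the reflection $k\leftrightarrow L_n-k$, the Pochhammer quotient $A_k := ((a)_k/k!)^s$ should equal $(-1)^{L_ns}A_{L_n-k}$ up to a controlled $p$-adic correction governed by the $p$-adic digamma function. Telescoping these corrections against the analogous expansions for $F'$, along the lines of the bookkeeping in the proofs of Lemmas~\ref{BA_congruence} and \ref{generalBA} of Section~3, will force $E_n\equiv 0\pmod{p}$.

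The hard part is this last step. While the mod-$p$ palindrome is purely algebraic, its mod-$p^n$ refinement encodes a compatibility between the reflection $k\leftrightarrow L_n-k$ and the full Dwork tower $a\to a'\to a''\to\cdots$, and the digamma-level cancellations must be verified simultaneously across all digits $l_0,\ldots,l_{n-1}$; unlike Section~3's telescoping, which matches a shift $k\mapsto k+p^m$, the reflection entangles two indices at once, which I expect to be the main analytic obstacle. For the geometric case $s=2$, $a\in\tfrac{1}{N}\mathbb{Z}\cap(0,1)$, $p>N$ (Theorem~\ref{Dwork-trans-thm}), this combinatorial difficulty is bypassed by the involution $(x,y,t_0)\mapsto(1-x,t_0^{A-N}y,t_0^{-1})$ of the Gauss-type hypergeometric curve; for arbitrary $s$ and $a\in\mathbb{Z}_p$ no analogous geometric model is known, so a new analytic or cohomological input beyond those developed in \cite{A} appears to be required to push the induction through.
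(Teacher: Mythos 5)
The statement you are proving is labelled as a \emph{conjecture} in the paper, and the paper contains no proof of it in this generality: all it offers is the observation (Remark following Conjecture~\ref{Dwork-trans-conj}) that the identity holds modulo $p$ as a consequence of Proposition~\ref{involution}, together with a proof of the special case $s=2$, $a\in\frac{1}{N}\mathbb{Z}\cap(0,1)$, $p>N$ (Theorem~\ref{Dwork-trans-thm}) by the geometric involution $(z,w,s_0)=(1-x,t_0^{A-N}y,t_0^{-1})$ on the hypergeometric curve of Gauss type. Your first two paragraphs are sound and consistent with this: the cross-multiplied Laurent-polynomial reformulation is valid in $(W/p^n)[t,t^{-1},h(t)^{-1}]$ because the truncations are units there, and your mod-$p$ argument via the factorization $F(t)_{<p^n}\equiv\prod_i\bigl[F_{a^{(i)},\cdots,a^{(i)}}(t)_{<p}\bigr]^{p^i}$ from the proof of Corollary~\ref{overconvergent} combined with Proposition~\ref{involution}(1) is correct (indeed it is more than is needed: modulo $p$ one has $[F'(t^p)]_{<p}=1$, so the $n=1$ case is immediate from Proposition~\ref{involution}(1) alone, which is exactly the paper's remark). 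Your sign bookkeeping for the numerator/denominator shifts should be checked separately at $p=2$, but modulo $2$ the sign is invisible anyway.

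The genuine gap is the one you yourself flag: the lift from $p^{n-1}$ to $p^n$ is not carried out, and the ``$p$-adic refinement of Proposition~\ref{involution}(1) under the reflection $k\leftrightarrow L_n-k$'' that your induction requires is precisely the content of the conjecture, not a tool available from Section~3. The Lipschitz/telescoping machinery of Lemmas~\ref{BA_congruence} and \ref{generalBA} controls the variation of $B_k/A_k$ under shifts $k\mapsto k+p^{m}$ within a single residue class, whereas the reflection couples $A_k$ to $A_{L_n-k}$ across the whole range of indices and across all levels of the Dwork tower simultaneously; nothing in the paper supplies the digamma-level cancellation you would need. So the proposal establishes the conjecture modulo $p$ and correctly diagnoses why the remaining cases are hard, but it does not prove the statement; that is also the state in which the paper leaves it, the only fully proved instance being the geometric case of Theorem~\ref{Dwork-trans-thm}, where the Frobenius compatibility $\Phi(-t^{a_m}\widetilde{\omega}_m)$ computed in the two coordinate systems replaces the combinatorial induction entirely.
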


\begin{remark}
In Proposition \ref{involution}, we have Conjecture \ref{Dwork-trans-conj} holds modulo $p.$
\end{remark}

We have the special case of Conjecture \ref{Dwork-trans-conj}. 
\begin{thm}\label{Dwork-trans-thm}
Let $a\in \frac{1}{N}{\mathbb{Z}}, 0<a<1$ and $p>N$. We have
$$
\mathscr{F}^{{\rm Dw}}_{a,a}(t)=t^{l}\mathscr{F}^{{\rm Dw}}_{a,a}(t^{-1}),
$$
where $l$ is the unique integer in $\{0,1,\cdots,p-1\}$ such that $a+l\equiv 0 \mod p.$
\end{thm}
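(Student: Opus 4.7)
The plan is to adapt the geometric strategy used for Theorem \ref{transformation-thm} to the Dwork setting, by comparing the Frobenius action on the cohomology of the Gauss-type hypergeometric curve in the two coordinate descriptions. Given $a = r/N \in \frac{1}{N}\mathbb{Z} \cap (0,1)$ with $p > N$, I would take $A = 1$ and $n = N-r$ so that $a = a_n := \{-n/N\}$ in the sense of Section \ref{Hypergeometric Curves of Gauss Type}, and work with $X: y^N = x(1-x)(1-(1-t_0^N)x)^{N-1}$ (coordinates $(x, y, t_0)$ with $t = t_0^N$) together with its reparametrization $\widehat{X}: w^N = z(1-z)(1-(1-s_0^N)z)^{N-1}$ (coordinates $(z, w, s_0) = (1-x, t_0^{1-N}y, t_0^{-1})$, so $s_0^N = t^{-1}$). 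The key input from \cite{A} is Proposition 4.7: taking $\sigma(t) = t^p$, the Frobenius lift on $H^1_{\rm dR}(X/S_K)(n)$ satisfies $\Phi(\tilde\omega_m) \equiv p^{-1}\tilde\omega_n \pmod{\tilde\eta_n}$, where $m \in \{1, \dots, N-1\}$ is determined by $pm \equiv n \pmod N$ and $\tilde\omega_k := \omega_k/F_{a_k, a_k}(t)$.

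First I would carry out a direct change-of-variables computation to show $\widehat\omega_k = -t^{a_k}\omega_k$, which follows from the identity $1-(1-s_0^N)z = t_0^{-N}(1-(1-t_0^N)x)$ together with $a_k = 1 + A_k - kA/N$; this in turn gives $\widehat{\tilde\omega}_k = -t^{a_k}[F_{a_k,a_k}(t)/F_{a_k,a_k}(t^{-1})]\tilde\omega_k$. Next I would apply Proposition 4.7 both to $X$ and to $\widehat X$ (whose base variable is $s_0^N = t^{-1}$), producing two congruences for $\Phi(\tilde\omega_m)$ and $\Phi(\widehat{\tilde\omega}_m)$ modulo $\tilde\eta_n$ and $\widehat{\tilde\eta}_n$ respectively. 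The pivotal observation is that by Lemma \ref{ker_nabla} (transferred to the Gauss-type curve via the quotient $\rho$), $\ker\nabla_n$ is one-dimensional over $K$ and generated by $\tilde\eta_n$; since $\widehat{\tilde\eta}_n$ is also a flat section of the same intrinsic connection on $H^1_{\rm dR}(X)$, it must be a $K^\times$-multiple of $\tilde\eta_n$, so congruences modulo $\tilde\eta_n$ and modulo $\widehat{\tilde\eta}_n$ are equivalent conditions.

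The endgame is to substitute $\widehat{\tilde\omega}_k = -t^{a_k}[F_{a_k,a_k}(t)/F_{a_k,a_k}(t^{-1})]\tilde\omega_k$ into the second congruence, use the $\sigma$-linearity of $\Phi$ to push $\sigma(t) = t^p$ through the scalar factors, and compare coefficients of $\tilde\omega_n$ with the first congruence (valid since $\tilde\omega_n, \tilde\eta_n$ are linearly independent). This should yield
\[
t^{pa_m}\frac{F_{a_m,a_m}(t^p)}{F_{a_m,a_m}(t^{-p})} = t^{a_n}\frac{F_{a_n,a_n}(t)}{F_{a_n,a_n}(t^{-1})}.
\]
Rearranging, and using $a_m = a_n'$ with the definition $\mathscr{F}^{\rm Dw}_{a_n, a_n}(t) = F_{a_n, a_n}(t)/F_{a_n', a_n'}(t^p)$, gives $\mathscr{F}^{\rm Dw}_{a_n, a_n}(t) = t^{pa_m - a_n}\mathscr{F}^{\rm Dw}_{a_n, a_n}(t^{-1})$; the exponent $pa_m - a_n$ is precisely the integer $l$ of the theorem because the Dwork-prime relation $pa_n' = a_n + l$ becomes $pa_m = a_n + l$. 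The main technical obstacle I foresee is the subtlety around the choice of basis: a direct computation shows $\widehat\eta_n = -t^{a_n+1}\omega_n + t^{a_n+2}\eta_n$ is \emph{not} a scalar multiple of $\eta_n$, so working with the naive basis $\omega_n, \eta_n$ would make the two congruences live modulo incompatible submodules, and passing to the intrinsic basis $\tilde\omega_n, \tilde\eta_n$ (characterized via the kernel of the connection) is what rescues the comparison.
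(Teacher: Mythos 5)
Your proposal is correct and follows essentially the same route as the paper: both compare the Frobenius action (via Asakura's Proposition 4.7, with $\sigma(t)=t^p$) on the eigencomponents of $H^1_{\rm dR}$ of the Gauss-type quotient curve in its two coordinate descriptions $(x,y,t_0)$ and $(z,w,s_0)=(1-x,t_0^{A-N}y,t_0^{-1})$, and extract the identity $t^{pa_m}F_{a_m,a_m}(t^p)/F_{a_m,a_m}(t^{-p})=t^{a_n}F_{a_n,a_n}(t)/F_{a_n,a_n}(t^{-1})$, which rearranges to the theorem via $pa_m-a_n=l$. Your explicit verification that $\widehat{\omega}_k=-t^{a_k}\omega_k$ while $\widehat{\eta}_k$ is \emph{not} proportional to $\eta_k$ (so that one must pass to the flat basis $\widetilde{\omega}_n,\widetilde{\eta}_n$ via Lemma \ref{ker_nabla}) is a detail the paper leaves implicit, and is a worthwhile addition.
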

\begin{proof}
Let $\sigma:W[[t]]\rightarrow W[[t]]$ be the $p$-th Frobenius given by $\sigma(t)=t^p,$ and let $\Phi$ be the $p$-th Frobenius induced by $\sigma$ on $K((t_0))\otimes H^1_{{\rm dR}}(Y/S)$ where $Y/S$ is the hypergeometric curve in \S \ref{Hypergeometric Curves}. By \cite[Proposition 4.7]{A}, $\Phi$ induces a map
with 
$$
\Phi(\widetilde{\omega}_{p^{-1}nA,p^{-1}nA})\equiv p^{-1}\widetilde{\omega}_{nA,nA} \mod K((t))\widetilde{\eta}_{nA,nA}.
$$ 
Let $m \equiv p^{-1}nA \mod N$ with $1\leq m \leq N-1$ and $k\equiv nA \mod N$ with $1\leq n \leq N-1.$ Then we have
\begin{equation}
\Phi({\omega}_{p^{-1}nA,p^{-1}nA})\equiv p^{-1}\frac{F_m(t^{\sigma})}{F_k(t)}{\omega}_{nA,nA}=p^{-1}\mathscr{F}^{{\rm Dw}}_{a_n,a_n}(t)^{-1}{\omega}_{nA,nA}
\end{equation} where $a_n=\{-nA/N\}.$ \medskip

Therefore
\begin{equation}\label{midequation}
    \Phi(\widehat{\omega}_m)\equiv p^{-1}\mathscr{F}^{{\rm Dw}}_{a_n,a_n}(t)^{-1}\widehat{\omega}_n.
\end{equation}
Recall that we have two description of $Y/G.$ From the coordinate $(z,w,s_0),$ one has 
\begin{equation}
    \Phi(\widetilde{\omega}_m)\equiv p^{-1}\mathscr{F}^{{\rm Dw}}_{a_n,a_n}(s_0^{N})^{-1}\widetilde{\omega}_n.
\end{equation}
Then using $(z,w,s_0)=(1-x, t_0^{A-N}y,t_0^{-1}),$ we obtain
\begin{equation}\label{compare_1}
    \Phi(-t^{a_m}\widetilde{\omega}_m)\equiv p^{-1}\mathscr{F}^{{\rm Dw}}_{a_n,a_n}(t^{-1})^{-1}(-t^{a_n})\widetilde{\omega}_n.
\end{equation} 
On the other hand, we have 
\begin{equation}\label{compare_2}
\Phi(-t^{a_m}\widetilde{\omega}_m)\equiv (-t^{pa_m})p^{-1}\mathscr{F}^{{\rm Dw}}_{a_n,a_n}(t)^{-1}\widetilde{\omega}_n
\end{equation} by $\Phi(t)=t^p$ and (\ref{midequation}). Then comparing (\ref{compare_1}) and (\ref{compare_2}), we obtain the result.
\end{proof}

\end{document}